\newtheorem{definition}{Definition}[section]
\newtheorem{theorem}[definition]{Theorem}
\newtheorem{lemma}[definition]{Lemma}
\newtheorem{claim}[definition]{Claim}
\newtheorem{corollary}[definition]{Corollary}
\newtheorem{proposition}[definition]{Proposition}
\newtheorem{observation}{Observation}
\newcommand{\bind}{\operatorname{bind}}
\title{Strong binding numbers and factors}
\date{\relax}
\author{
    Guantao Chen\thanks{Department of Mathematics and Statistics, Georgia State University, Atlanta, GA 30303, gchen@gsu.edu. Research of this author was partially supported by NSF grant DMS-2154331.} \and
    Mikhail Lavrov\thanks{misha.p.l@gmail.com.} \and
    Yuying Ma\thanks{Department of Mathematics and Statistics, Georgia State University, Atlanta, GA 30303, yma29@gsu.edu.} 
    \and Jennifer Vandenbussche\thanks{Department of Mathematics, Kennesaw State University, Marietta, GA 30060, jvandenb@kennesaw.edu.} 
    \and Hein van der Holst \thanks{Department of Mathematics and Statistics, Georgia State University, Atlanta, GA 30303, hvanderholst@gsu.edu.} 
}
\begin{document}
\maketitle

\begin{abstract}
Let $G$ be a simple graph. The \textit{$k$-th neighborhood} of a vertex subset $S \subseteq V(G)$, denoted $\Lambda^k(S)$, is the set of vertices that are adjacent to at least $k$ vertices in $S$. The \textit{$k$-th binding number} $\beta^k(G)$ is defined as the minimum ratio $|\Lambda^k(S)|/|S|$ over all subsets $S \subseteq V(G)$ with $|S| \ge k$ and $\Lambda^k(S) \ne V(G)$. This parameter generalizes the classical binding number introduced by Woodall.
Andersen showed that the condition $\beta^1(G) \ge 1$ does not guarantee the existence of a $1$-factor in $G$, while Bar\'at \textit{et al.} proved that $\beta^2(G) \ge 1$ suffices for the existence of a $2$-factor.  In this paper, we extend this result to general $k \ge 2$ by showing that any graph $G$ with even $k|V(G)|$ and $\beta^k(G) \ge 1$ contains a $k$-factor. Moreover, if $G$ is additionally a split graph of even order, then it admits a $(k+1)$-factor. We also prove that any graph $G$ with $\beta^k(G) \ge 1$ contains at least $k-1$ disjoint perfect or near-perfect matchings. Finally, for any bipartite graph $G$ with bipartition $(X, Y)$, we introduce an analogue of the $k$-th binding number and show that, under the condition $\beta^k(G, X) \ge 1$, the graph admits $k$ disjoint matchings, each covering $X$.

\vspace{0.3cm}
\textit{Keywords:} Binding number, $k$-factor, Matchings. 
\end{abstract}

\section{Introduction}\label{sec:introduction}

All graphs considered in this paper are simple, that is, finite and without loops or multiple edges. Let $G$ be a graph with vertex set $V(G)$ and edge set $E(G)$. The {\em order} of $G$, denoted by $|G|$, is the number of vertices in $G$, that is, $|G| = |V(G)|$. For a subset $S \subseteq V(G)$, denote by $G[S]$ the subgraph induced by $S$, and denote by $G-S$ the subgraph induced by $V(G)\setminus S$. The {\em neighborhood} of $S$, denoted $\Lambda_G(S)$, is the set of vertices in $V(G)$ that are adjacent to at least one vertex in $S$. 
More generally, we define the {\em $k$-th neighborhood} of $S$, $\Lambda^k_G(S)$, as the set of vertices in $V(G)$ that have at least $k$ neighbors in $S$.
In particular, for $S = \{v\}$ we write $\Lambda_G(v)$ for $\Lambda_G(\{v\})$ and define the degree $d_G(v)$ by $d_G(v) = |\Lambda_G(v)|$.
The maximum and minimum degrees of $G$ are denoted by $\Delta(G)$ and $\delta(G)$, respectively. For a positive integer $k$, a graph $G$ is said to be {\em $k$-regular} if $\Delta(G) = \delta(G) = k$. A {\em $k$-factor} of a graph $G$ is a $k$-regular spanning subgraph.  In particular, a $1$-factor is also known as a {\em perfect matching}. When $G$ has odd order, a {\em near-perfect matching} is a $1$-regular subgraph containing all vertices of $G$ except one. 

The {\em binding number} of a graph $G$, introduced by Woodall~\cite{WOODALL1973}, is defined as 
\[
\bind(G) = \min\left\{\frac{|\Lambda_G(S)|}{|S|}: \emptyset \subset S \subseteq V(G), \Lambda_G(S) \ne V(G)\right\}.
\]
In other words, $\bind(G) \ge c$ implies that every subset $S \subseteq V(G)$ satisfies either $|\Lambda(S)| \ge c|S|$ or $\Lambda(S) = V(G)$. The relationship between the binding number and the existence of $k$-factors has been extensively studied. Anderson~\cite{ANDERSON1971} proved that every graph $G$ of even order with $\bind(G) \ge \frac 43$ contains a $1$-factor, and the bound $\frac 43$ is best possible, as demonstrated by the graphs $(r+2)K_3 + rK_1$, where $r \in \mathbb{Z}^+$ and $+$ denotes the {\em complete join} of two vertex-disjoint graphs — adding all edges between the two parts. In the same paper where the binding number was introduced, Woodall~\cite{WOODALL1973} showed that any graph with $\bind(G) \ge \frac 32$ has a Hamiltonian cycle. Later, Katerinis and Woodall~\cite{KATERINISWOODALL} extended the result to general $k \ge 2$, showing that every graph of order at least $4k-6$ with $nk$ even and $\bind(G) > \frac{(2k-1)(n-1)}{kn-2k+3}$ contains a $k$-factor. They also showed that this bound is sharp in $K_{2l-2r} + lK_2$, where $r \in \mathbb{Z}^+$ and $l = rk-1$.

Andersen's construction~\cite{ANDERSON1971} demonstrates that the condition $|\Lambda(S)| \ge |S|$ for all nonempty subsets $S \subseteq V(G)$ does not necessarily guarantee the existence of a $1$-factor in a graph $G$ of even order, while Bar\'at \textit{et al.}~\cite{Hungarian} proved that any graph $G$ of order at least two that satisfies $|\Lambda^2(S)| \ge |S|$ for all subsets $S \subseteq V(G)$ with $|S| \ge 2$ must contain a $2$-factor.
This contrast motivates us to study whether vertex expansion conditions can be used to ensure the existence of factors of higher degree.
During our investigation, we found that while the classical binding number behaves rather differently, the higher-order versions exhibit strikingly consistent structural behavior. To capture this, we introduce a generalization of the classical binding number, called the {\em $k$-th binding number}~$\beta^k(G)$. Let $k$ be a positive integer. For a graph $G$, we define
\[ 
\beta^k(G) = \min\left\{\frac{|\Lambda^k_G(S)|}{|S|}: S \subseteq V(G), |S| \ge k, \Lambda^k_G(S) \ne V(G)\right\}.
\]
For completeness, we set $\beta^k(G) := 0$ when $|G| < k$. 
The following is our first main result.
\begin{theorem}\label{thm:k-factor}
Let $k \ge 2$ be an integer, and let $G$ be a graph on $n$ vertices such that $nk$ is even. If $\beta^k(G) \ge 1$, then $G$ contains a $k$-factor. Moreover, the lower bound $1$ is sharp. 
\end{theorem}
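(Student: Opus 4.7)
My approach uses Tutte's $k$-factor theorem: $G$ has a $k$-factor if and only if, for every pair of disjoint vertex sets $S, T \subseteq V(G)$,
\[
\delta(S, T) \;:=\; k|S| - k|T| + \sum_{v \in T} d_{G-S}(v) - q(S, T) \;\ge\; 0,
\]
where $q(S, T)$ counts the components $C$ of $G - S - T$ with $k|V(C)| + e_G(V(C), T)$ odd. Since $\delta(S, T) \equiv k|V(G)| \pmod 2$ is even by hypothesis, failure of the criterion yields some $(S, T)$ with $\delta(S, T) \le -2$. I plan to assume for contradiction that $G$ has no $k$-factor, pick such a pair $(S, T)$ that is extremal in the standard sense (say $|T|$ maximum and, subject to that, $|S|$ minimum), and then use this pair to build a set $A \subseteq V(G)$ that witnesses $\beta^k(G) < 1$.

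From the extremal choice, standard exchange arguments (analogous to those in Katerinis--Woodall and in Bar\'at et al.) should yield three structural properties:
\begin{itemize}
\item[(P1)] $d_{G-S}(v) \le k-1$ for every $v \in T$;
\item[(P2)] every odd component of $G - S - T$ is a single vertex $\{u\}$;
\item[(P3)] $|N_G(u) \cap T| \le k-1$ for every $u \in V(G) \setminus (S \cup T)$.
\end{itemize}
Each of these is proved by moving a hypothetical violator between $T$, $S$, and $V(G)\setminus(S\cup T)$ and showing that the new pair has $\delta \le -2$ but is strictly smaller in the chosen preorder. The delicate part of each such argument is tracking the change in $q(S,T)$ under vertex moves, since a single move can simultaneously merge components and flip parities.

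Let $U$ denote the union of the (singleton) odd components, so $|U| = q(S, T)$, and set $A := T \cup U$. Using (P1)--(P3) together with the fact that distinct components of $G - S - T$ are not joined by any edges, one verifies $\Lambda^k_G(A) \subseteq S$: each $v \in T$ has at most $k-1$ neighbors in $V(G)\setminus S \supseteq A$ by (P1); each $u \in U$ has no neighbors in $U\setminus\{u\}$ and at most $k-1$ neighbors in $T$ by (P3); and each $w$ in an even component of $G - S - T$ has no neighbors in $U$ and at most $k-1$ in $T$, again by (P3). In particular $\Lambda^k_G(A) \neq V(G)$. Combining $\delta(S, T) \le -2$ with (P1) gives $k|T| + |U| \ge k|S| + 2$, which for $k \ge 2$ implies $|T| + |U| > |S|$; hence $|\Lambda^k_G(A)| \le |S| < |A|$, contradicting $\beta^k(G) \ge 1$ as long as $|A| \ge k$.

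The main obstacles are (a) pinning down (P1)--(P3) via the careful swap analysis sketched above, which is the technical heart of the argument; and (b) dealing with corner cases where $|A| < k$ or some of $S$, $T$, $U$ are empty. For example, if $|S| = 0$, (P1) forces every $v \in T$ to have degree at most $k-1$ in $G$, so $A := V(G)$ itself witnesses $\beta^k(G) < 1$, provided $n \ge k$ (otherwise $\beta^k(G) = 0$ by convention and there is nothing to prove). For the sharpness statement, I would exhibit a family of graphs with $\beta^k(G) \to 1^-$ admitting no $k$-factor; the natural candidates generalize Andersen's $(r+2)K_3 + rK_1$ construction, tuned so that a small parity obstruction blocks any $k$-regular spanning subgraph while the global $k$-th neighborhood ratio converges to $1$.
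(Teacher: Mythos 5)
Your overall strategy (Tutte's $k$-factor theorem plus an extremal barrier) is the same as the paper's second proof in Section~6, but the structural properties you hope to extract from extremality are not obtainable, and two of them are false. First, your extremal order is backwards for your own purposes. To get (P3), i.e.\ $e(u,T) \le k-1$ for $u \in V(G)\setminus(S\cup T)$, one moves $u$ into $S$ and argues the new pair cannot be a barrier; that argument needs $|S|$ \emph{maximal}, which is exactly the paper's maxmin barrier ($|S|$ maximum, then $|T|$ minimum). Under your choice ($|T|$ maximum, then $|S|$ minimum), the pair $(S\cup\{u\},T)$ has the same $|T|$ and larger $|S|$, so its being a barrier contradicts nothing; your extremality instead yields inequalities in the wrong direction (lower bounds on $d_{G-S}(u)$ for $u\in U$, and lower bounds on $e(s,T)$ for $s \in S$). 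Second, even with the correct maxmin choice, (P1) is false as stated: moving $v \in T$ out of $T$ merges the odd components adjacent to $v$, and the resulting drop in $q$ offsets the degree gain, so the true conclusion is only $d_{G-S}(v) \le k-2+m$, where $m$ is the number of distinct odd components in which $v$ has a neighbor (the paper's Lemma~5.2(ii)--(iv)). Third, (P2) is not a consequence of any exchange argument: odd components of an extremal barrier need not be singletons (already for $k=1$, maximal Tutte barriers have factor-critical odd components of arbitrary size). Since (P2) fails, your set $A = T \cup U$ no longer satisfies $|U| = q(S,T)$, vertices inside a large odd component can have up to $|C|-1+e(\cdot,T)$ neighbors in $A$, and the final counting collapses.

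It is instructive that the paper's proof is shaped precisely by the failure of (P1) and (P2): it must split into the cases $k$ even and $k$ odd, bound component sizes \emph{using the binding hypothesis itself} ($|C| \le k-|T|-1$ when $|T| \le k-1$, via the test set $Q = T \cup C$), and, for odd $k$, partition the odd components into those with and without neighbors in $T$ ($\mathcal{H}_0$, $\mathcal{H}_1$) and run a separate count for each (Claim~6.1). Your corner case $|A| < k$, which you flag but do not resolve, is likewise where most of the paper's work lives (the small-$|T|$ subcases). Note also that the paper has a much shorter first proof (Section~4) that avoids barriers entirely: induct on $k$, peel off a $(k-2)$-factor (perfect matchings via Theorem~1.3 when $n$ is even, induction when $n$ is odd), and apply the key Lemma~3.7 ($\beta^k(G)\ge 1$ and $\Delta(F)\le k-2$ imply $\beta^2(G-E(F))\ge 1$) together with the Bar\'at \emph{et al.} $2$-factor theorem. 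Finally, for sharpness you only gesture at a modification of Andersen's construction without verifying anything; the paper exhibits the explicit family $\overline{K}_{n/2+1} + K_{n/2-1}$, shows it has no $k$-factor by an edge count, and computes $\beta^k(G) = \frac{n-2}{n+2k} \to 1$.
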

We will show in Section~\ref{sec:properties} that $\beta^k(G) \ge 1$ implies that $G$ is non-bipartite and $n \ge 2k$.
We give two proofs of Theorem~\ref{thm:k-factor} (Sections~\ref{sec:first-proof} and~\ref{sec:k-factor}) and construct a family of examples (Section~\ref{sec:first-proof}) showing that the condition $\beta^k(G) \ge 1$ is tight. The first proof is relatively short and relies on the base case $k = 2$ together with a technical lemma introduced in Section~\ref{subsec:1-factors-nonbip}. The proof is concise, but the method seems difficult to extend—that is, to show that $\beta^k(G) \ge 1$ implies the existence of a $t$-factor in $G$ for some $t > k$. The second is based on Tutte's $k$-factor theorem, using maxmin barrier techniques. Although the proof is more involved, the underlying approach appears more likely to lead to a potential improvement in Theorem~\ref{thm:k-factor}, which we demonstrate via split graphs in Section~\ref{sec:split}.

A graph $G$ is called {\em a split graph} if its vertex set $V(G)$ can be partitioned into two disjoint sets $X$ and $Y$, where $X$ is an independent set and $Y$ is a clique. We refer to such a pair $(X, Y)$ as a {\em split partition}. We prove the following result:

\begin{theorem}\label{thm:k+1-factor-split-graph}
    Let $k \ge 2$ be an integer, and let $G$ be a split graph with split partition $(X, Y)$, where $|X|+|Y|$ is even. If $\beta^k(G) \ge 1$, then $G$ contains a $(k+1)$-factor.
\end{theorem}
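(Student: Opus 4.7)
The plan is to adapt the Tutte-barrier approach from the second proof of Theorem~\ref{thm:k-factor} to the $(k+1)$-factor setting, leveraging the clique structure of $Y$. Suppose for contradiction that $G$ has no $(k+1)$-factor. By Tutte's $(k+1)$-factor theorem, there exist disjoint $S, T \subseteq V(G)$ with
$$\eta(S,T) := (k+1)|S| - (k+1)|T| + \sum_{v \in T} d_{G-S}(v) - q_{k+1}(S,T) < 0,$$
where $q_{k+1}(S,T)$ counts components $C$ of $G - (S \cup T)$ with $e_G(V(C), T) + (k+1)|V(C)|$ odd. I would choose such a barrier $(S, T)$ extremally, for instance minimizing $|S|+|T|$ and then maximizing $|T|$, as in the second proof of Theorem~\ref{thm:k-factor}. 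Standard reductions then force structural constraints such as $d_{G-S}(v) \le k$ for every $v \in T$.

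Next, I exploit the split structure: write $S = S_X \cup S_Y$ and $T = T_X \cup T_Y$ according to $(X,Y)$. Because $Y$ is a clique, any two vertices of $Y \setminus (S \cup T)$ lie in the same component of $G - (S \cup T)$; hence at most one component $C^{\star}$ of $G - (S \cup T)$ meets $Y$, and every other component is a singleton $\{x\}$ with $x \in X \setminus (S \cup T)$ and $\Lambda_G(x) \subseteq S_Y \cup T_Y$. Let $I$ denote this set of isolated vertices in $X$. Since $q_{k+1}(S,T) \le |I| + 1$, the inequality $\eta(S,T) < 0$ gives a lower bound of the shape $|I| > (k+1)|S| - (k+1)|T| + \sum_{v \in T} d_{G-S}(v) - 1$, which the extremal choice of $(S,T)$ lets one simplify further.

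The contradiction is obtained by applying $\beta^k(G) \ge 1$ to the set $U := S_Y \cup T_Y \subseteq Y$. By Theorem~\ref{thm:k-factor}, $G$ already possesses a $k$-factor, so $\delta(G) \ge k$, and each $x \in I$ has at least $k$ neighbors, all lying in $U$; hence $I \subseteq \Lambda^k_G(U)$. Moreover, once $|U| \ge k$, the clique property of $Y$ forces $Y \setminus U \subseteq \Lambda^k_G(U)$. Provided $|U| \ge k$ and $\Lambda^k_G(U) \ne V(G)$, the hypothesis $\beta^k(G) \ge 1$ gives $|\Lambda^k_G(U)| \ge |U|$; the upper bound this places on $|S_Y| + |T_Y|$, juxtaposed with the lower bound on $|I|$ from the previous paragraph, yields a numerical contradiction.

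The main obstacle lies in three degenerate configurations: (i) the main component $C^{\star}$, when it exists and is counted by $q_{k+1}$, whose parity and edges to $T$ must be tracked carefully alongside $|I|$; (ii) the case $|U| < k$, which ought to be excluded by the Tutte violation being too large to be absorbed by a small $S_Y \cup T_Y$ and by the extremal hypotheses on $(S,T)$; and (iii) the case $\Lambda^k_G(U) = V(G)$, in which every vertex of $G$ has at least $k$ neighbors in $U \subseteq Y$, a condition strong enough to permit augmenting a $k$-factor to a $(k+1)$-factor via a Hall-type matching argument on the edges incident to $U$, again contradicting the assumption that no $(k+1)$-factor exists.
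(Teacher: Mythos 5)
There is a genuine gap, and it sits at the heart of your plan. The application of $\beta^k(G) \ge 1$ to $U := S_Y \cup T_Y$ is logically inverted: the hypothesis only yields the \emph{lower} bound $|\Lambda^k_G(U)| \ge |U|$, and since $U$ lies inside the clique $Y$, its $k$-th neighborhood is typically enormous --- as you yourself observe, $\Lambda^k_G(U) \supseteq (Y \setminus U) \cup I$ --- so the inequality is close to vacuous and places no upper bound on $|S_Y| + |T_Y|$. Under a binding-number hypothesis, contradictions come from probe sets whose $k$-th neighborhood is \emph{trapped} in a small set (e.g.\ $\Lambda^k(T) \subseteq S$ via barrier properties as in Lemma~\ref{lem:maxmin-barrier}, or one vertex chosen per component of $G - (S \cup T)$); no such trap exists for $S_Y \cup T_Y$. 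The hardest configuration is exactly the one your sketch cannot see: a single large (``monster'') component meeting $Y$, with $T \subseteq X$ (so $T_Y = \emptyset$), $I = \emptyset$, and the entire degree sum $\sum_{v \in T} d_{G-S}(v)$ absorbed by edges into that component. There both sides of your intended ``juxtaposition'' degenerate and no numerical contradiction materializes. The paper, at precisely this point, needs substantially more leverage: an edge-maximality assumption on $G$, a split partition chosen with $|X|$ minimum, a maxmin barrier additionally minimizing the number of odd components, and a classification of the components of the bipartite graph between $T$ and the monster component into paths and cycles, with the contradiction extracted from a weighted combination of three inequalities. Also, your extremal rule (minimize $|S|+|T|$, then maximize $|T|$) is not the maxmin rule actually used in the second proof of Theorem~\ref{thm:k-factor} (maximize $|S|$, then minimize $|T|$), and it forfeits the $U$-side constraints $e(w,T) \le k$ of Lemma~\ref{lem:maxmin-barrier}$(i)$ that your ``standard reductions'' would need.

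Your fallback in case (iii) also fails: to augment a $k$-factor $F$ to a $(k+1)$-factor you would need a perfect matching avoiding $E(F)$, but Lemma~\ref{lem:f-dominating} requires $\Delta(F) \le k-2$, whereas $\Delta(F) = k$, so the condition $\beta^2(G - E(F)) \ge 1$ is not available and no Hall-type argument is justified. The entire difficulty of this theorem is the extra ``$+1$'' beyond Theorem~\ref{thm:k-factor}, and your outline contains no mechanism that delivers it. The paper's route is the opposite of a direct general-$k$ barrier attack: it peels off $k-2$ pairwise disjoint perfect matchings (Theorem~\ref{thm:1-factors-nonbip}), applies Lemma~\ref{lem:f-dominating} to get $\beta^2(G - E(M)) \ge 1$, and thereby reduces everything to the single base case of producing a $3$-factor in a split graph with $\beta^2 \ge 1$, where all the delicate barrier analysis is then carried out.
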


Petersen~\cite{Petersen1891} proved that every $2k$-regular graph can be decomposed into $k$ edge-disjoint $2$-factors. However, the $k$-factor guaranteed by Theorem~\ref{thm:k-factor} does not necessarily decompose into $k$ disjoint $1$-factors; showing $\beta^k(G) \ge 1$ implies the existence of $k$ disjoint 1-factors would be a stronger result.  Instead, we prove something slightly weaker.

\begin{theorem}\label{thm:1-factors-nonbip}
Let $k \ge 1$ be an integer, and let $G$ be a graph on $n$ vertices. If $\beta^k(G) \ge 1$, then $G$ contains at least $k-1$ pairwise disjoint perfect matchings if $n$ is even, or $k-1$ near-perfect matchings if $n$ is odd.
\end{theorem}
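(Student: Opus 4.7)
The plan is to prove Theorem~\ref{thm:1-factors-nonbip} by induction on $k$, peeling off one (near-)perfect matching at a time. The base case $k=1$ is vacuous. For the inductive step, it suffices to show that whenever $0\le j\le k-2$ and $M_1,\dots,M_j$ are pairwise disjoint (near-)perfect matchings in $G$, the graph $G' := G - M_1 - \cdots - M_j$ contains another (near-)perfect matching; iterating this $k-1$ times produces the required family.

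The first tool I would establish is a monotonicity of the binding condition: $\beta^k(G)\ge 1$ implies $\beta^{\ell}(G)\ge 1$ for every $1\le \ell\le k$. For $|S| < k$, extend $S$ to an arbitrary $k$-set $S' \supseteq S$; since $\Lambda^k_G(S') \subseteq \Lambda^{|S|}_G(S)$ and $\Lambda^k_G(S') \ne V(G)$ is automatic (no vertex of $S'$ can have $k$ neighbors in the $k$-element set $S'$), one gets $|\Lambda^{|S|}_G(S)| \ge |\Lambda^k_G(S')| \ge k$. For $|S| \ge k$, the implication $\Lambda^{\ell}_G(S) \ne V(G) \Rightarrow \Lambda^k_G(S) \ne V(G)$ reduces the statement directly to $\beta^k(G) \ge 1$.

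For the main step, suppose $G'$ has no (near-)perfect matching. By Tutte--Berge there is $S \subseteq V(G)$ with $o(G' - S) \ge |S| + 2$; choose a representative $R_i$ from each odd component of $G' - S$ and set $T := \{R_i\}$, so $|T| \ge |S| + 2$. Every $v \notin S$ lies in a single component of $G' - S$ and hence has at most one $G'$-neighbor in $T$, so $\Lambda^2_{G'}(T) \subseteq S$. Since each $M_i$ destroys at most one $T$-neighbor per vertex, any $v \in \Lambda^{j+2}_G(T)$ retains at least two neighbors of $T$ in $G'$, yielding $\Lambda^{j+2}_G(T) \subseteq \Lambda^2_{G'}(T) \subseteq S$. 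When $|T| \ge j+2$, the monotonicity above gives $\beta^{j+2}(G) \ge 1$, so $|\Lambda^{j+2}_G(T)| \ge |T| \ge |S|+2$, contradicting $|\Lambda^{j+2}_G(T)| \le |S|$.

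The main obstacle I foresee lies in the boundary case $|T| < j+2$ (equivalently $|S| < j$), where $\beta^{j+2}$ cannot be applied to $T$ directly. The natural workaround is to extend $T$ to a $k$-set $T^* \supseteq T$ and apply $\beta^k(G) \ge 1$ to $T^*$, obtaining $|\Lambda^k_G(T^*)| \ge k$; one then bounds how many of these $k$ common neighbors can be knocked out of $\Lambda^2_{G'}(T)$ by matching partners into $T$. The accounting is tight---each $M_i$ contributes at most $|T|$ such partners, and the surviving common neighbors must still fit inside $S$, of size at most $|T| - 2$. Closing this boundary case cleanly may require a refined choice of $T^*$, an additional structural constraint on the matchings $M_1,\dots,M_j$ (chosen so as not to conspire adversarially against small $T$), or a deeper iteration of the monotonicity that strengthens the lower bound on $|\Lambda^{|T|}_G(T)|$ beyond $k$.
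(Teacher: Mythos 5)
Your skeleton is essentially the paper's: induction on $k$ peeling off one matching at a time, the monotonicity $\beta^k(G)\ge 1 \Rightarrow \beta^{\ell}(G)\ge 1$ (this is Observation~\ref{obs:k_to_k-1}, proved the same way), and a Tutte-type argument choosing one representative per odd component of $G'-S$ so that $\Lambda^2_{G'}(T)\subseteq S$ (this is how Lemmas~\ref{lem:1-factor-exist} and~\ref{lem:hypomatchable} work). Your main case $|T|\ge j+2$ is sound, including the observation that $\Lambda^{j+2}_G(T)\subseteq S\ne V(G)$ legitimizes applying the binding condition. But the boundary case you flag, $2\le |T|\le j+1$, is a genuine gap, and not a routine one: it is precisely the content of the paper's key technical Lemma~\ref{lem:f-dominating}, which shows that deleting any spanning subgraph $F$ with $\Delta(F)\le k-2$ still leaves $\beta^2(G-E(F))\ge 1$. (Given that lemma, your whole Tutte--Berge step collapses to one line, since $\Lambda^2_{G'}(T)\subseteq S$ with $|T|\ge |S|+2$ directly contradicts $\beta^2(G')\ge 1$ regardless of the size of $T$.)

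Moreover, the workaround you sketch provably fails with naive accounting, so the gap cannot be waved away. Extending $T$ to a $k$-set $T^*$ yields at least $k$ common neighbors of $T^*$; a common neighbor $v$ escapes $\Lambda^2_{G'}(T)$ as soon as $|T|-1$ of its edges into $T$ are matching edges, and since each $M_i$ places at most $|T|$ edges into $T$, up to $j|T|/(|T|-1)$ common neighbors can be destroyed. For $|T|=2$ and $j=k-2$ this guarantees only $k-2(k-2)=4-k$ survivors, which is nonpositive for every $k\ge 4$, so no contradiction with $|S|\le |T|-2$ follows. The paper closes exactly this case inside Lemma~\ref{lem:f-dominating} with a subtler device: a nested induction over subsets $T\subseteq S$ bounding the number of vertices that $F$-dominate $T$ by $(k-2)-(|T|-1)(k-|S|+1)$, where the $k$-set extension is chosen to \emph{maximize} the number of vertices nearly $F$-dominating $T$ (so that the $k-|T|$ added vertices themselves count toward the destruction budget), and the contradiction is that the bound at $T=S$ equals $(|S|-k)(|S|-2)-1<0$ for $2\le |S|<k$. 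Your closing remarks correctly anticipate that a refined extension or a strengthened induction is needed, but as written the inductive step is not closed and the proof is incomplete without an argument of this kind.
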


In the context of bipartite graphs, a natural analogue of the binding number considers subsets from both sides of the bipartition. 
For a bipartite graph $G$ with bipartition $(X, Y)$, the {\em bipartite binding number} $\bind'(G)$ is defined as follows. If $G$ is complete bipartite, then $\bind'(G) = \min\left\{|X|, |Y|\right\}$. Otherwise, 
\[
\bind'(G) = 
\min\left\{
\displaystyle
\min_{\substack{\emptyset \ne S \subseteq X \\ \Lambda_G(S) \neq Y}} \frac{|\Lambda_G(S)|}{|S|},\quad
\min_{\substack{\emptyset \ne T \subseteq Y \\ \Lambda_G(T) \neq X}} \frac{|\Lambda_G(T)|}{|T|}
\right\}.
\]
This parameter has been studied in relation to factor problems in bipartite graphs. For instance, Hu \textit{et al.}~\cite{Hu2013} showed that if $|X| = |Y| > 139$ and $\bind'(G) > \frac{3}{2}$, then $G$ has a Hamiltionian cycle, and the bound $\frac{3}{2}$ is best possible. Another result due to Qian~\cite{Qian2001} shows that if $|X| = |Y| = n$ and $\bind'(G) > \frac{n-1}{2(\sqrt{kn+1}-k)}$, then $G$ contains a $k$-factor. 

While the bipartite binding number measures expansion by counting all neighbors, we introduce a refined, one-sided variant that considers only those with degree at least~$k$ within the selected subset:
\[
\beta^k(G, X) = \min\left\{ \frac{|\Lambda^k_G(S)|}{|S|} : S \subseteq X,\ |S| \ge k \right\},
\]
which we refer to as the \emph{weak bipartite $k$-th binding number}. For completeness, we define $\beta^k(G, X) := 0$ when $|X| < k$. The special case $\beta^2(G, X) \ge 1$ is known as the {\em double Hall property}; see~\cite{Hungarian, CLMSV2025} for more results.  Under this definition, we obtain the following result.
\begin{theorem} \label{thm:1-factor-bip}
Let $k \ge 1$ and $G$ be a bipartite graph with bipartition $(X, Y)$. If $\beta^k(G, X) \ge 1$, then $G$ contains $k$ disjoint matchings, each of which covers all vertices in $X$.
\end{theorem}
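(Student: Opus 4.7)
My plan is to construct a spanning subgraph $H\subseteq G$ satisfying $d_H(x)=k$ for every $x\in X$ and $d_H(y)\le k$ for every $y\in Y$. Once this is done, $H$ is bipartite with $\Delta(H)=k$, so by König's edge-coloring theorem its edge set decomposes into $k$ pairwise disjoint matchings $M_1,\dots,M_k$. Since each $x\in X$ has exactly $k$ incident edges in $H$ and there are $k$ color classes, every $M_i$ meets $x$ in exactly one edge and therefore covers $X$, as required.

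To find $H$, I would appeal to the standard max-flow characterization of such degree-constrained subgraphs: $H$ exists if and only if
\[
\sum_{y\in Y}\min\bigl(k,\,|\Lambda_G(y)\cap S|\bigr)\ge k|S| \qquad \text{for every } S\subseteq X, \qquad (\ast)
\]
which follows from min-cut on the obvious network (source $s$, arcs $sx$ of capacity $k$ for $x\in X$, arcs $xy$ of capacity $1$ for $xy\in E(G)$, arcs $yt$ of capacity $k$ for $y\in Y$, sink $t$). Before verifying $(\ast)$, I first observe that $\beta^k(G,X)\ge 1$ forces $d_G(x)\ge k$ for every $x\in X$: the hypothesis implies $|X|\ge k$, so one can pick $S\subseteq X$ with $x\in S$ and $|S|=k$, and every $y\in\Lambda^k_G(S)$ has at least $k=|S|$ neighbors in $S$, hence is adjacent to all of $S$; thus $d_G(x)\ge |\Lambda^k_G(S)|\ge k$.

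Verifying $(\ast)$ then splits into two overlapping cases. If $|S|\le k$, then $|\Lambda_G(y)\cap S|\le k$ for every $y$, so the left-hand side of $(\ast)$ equals $\sum_{y\in Y}|\Lambda_G(y)\cap S|=\sum_{x\in S}d_G(x)\ge k|S|$ by the degree bound just derived. If $|S|\ge k$, each $y\in\Lambda^k_G(S)$ contributes exactly $k$ to the sum, so the sum is at least $k|\Lambda^k_G(S)|\ge k|S|$ by the hypothesis $\beta^k(G,X)\ge 1$. I do not anticipate a serious obstacle here: the argument is essentially a clean translation from $\beta^k(G,X)\ge 1$ into the cut condition $(\ast)$, with the only subtlety being that the small-subset regime $|S|<k$ lies outside the scope of the binding-number hypothesis and must be recovered separately from the pointwise degree bound in the preliminary observation.
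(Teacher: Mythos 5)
Your proposal is correct, and its combinatorial core coincides with the paper's: your cut condition $(\ast)$ is exactly the criterion $L^k(S) = \sum_{y\in Y}\min\{k,\,|\Lambda_G(y)\cap S|\} \ge k|S|$ of Lebensold's theorem, which the paper cites as a black box (Theorem~\ref{thm:Lebensold}), and your verification of it matches the paper's almost line by line --- the same preliminary observation that $\beta^k(G,X)\ge 1$ forces $d_G(x)\ge k$ for each $x\in X$ (via a $k$-set $S_x\ni x$ whose $k$-th neighborhood consists of common neighbors of all of $S_x$), the same double-counting argument when $|S|\le k$, and the same use of $|\Lambda^k_G(S)|\ge |S|$ when $|S|\ge k$. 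Where you genuinely depart is in not citing Lebensold: you re-derive the sufficiency of the criterion yourself, first extracting an integral degree-constrained subgraph $H$ with $d_H(x)=k$ on $X$ and $d_H(y)\le k$ on $Y$ from max-flow/min-cut, then splitting $H$ into $k$ matchings by K\H{o}nig's edge-coloring theorem, each matching covering $X$ because every $x\in X$ has exactly $k$ incident edges distributed among $k$ color classes. This buys a self-contained proof from two textbook tools in place of a less standard citation, at the cost of a bit more machinery; the paper's route is shorter because both directions of the equivalence are outsourced. If you write this up, two details deserve an explicit sentence each: that the min-cut calculation (each cut pays $k$ per deleted arc $sx$ or $yt$, else the edges in $\Lambda_G(y)\cap S$) really reduces to $(\ast)$, and that integrality of the max flow is what makes $H$ an honest subgraph. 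Incidentally, your overlapping case split $|S|\le k$ versus $|S|\ge k$ also absorbs $k=1$ uniformly, whereas the paper handles $k=1$ separately via Hall's theorem.
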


As an immediate consequence, if the bipartite graph $G$ is also balanced (i.e., $|X| = |Y|$), then it admits a $k$-factor.

The rest of the paper is organized as follows. In the next section, we discuss several structural properties of graphs related to the $k$-th binding number. Section~\ref{sec:1-factors} is devoted to the study of matchings. In Section~\ref{subsec:1-factors-nonbip}, we introduce a key technical lemma and use it to prove Theorem~\ref{thm:1-factors-nonbip} for non-bipartite graphs. In Section~\ref{subsec:1-factors-bip}, we prove Theorem~\ref{thm:1-factor-bip} for bipartite graphs. Leveraging the lemma, we then give a short proof of Theorem~\ref{thm:k-factor} in Section~\ref{sec:first-proof} and analyze the tightness of the lower bound. Sections~\ref{sec:k-factor} and~\ref{sec:split} are based on Tutte's $k$-factor theorem, so in Section~\ref{sec:Pre}, we review this theorem and give some technical results.
Section~\ref{sec:k-factor} presents the second proof of Theorem~\ref{thm:k-factor}, where we treat the cases of even and odd $k$ separately. Section~\ref{sec:split} contains the proof of Theorem~\ref{thm:k+1-factor-split-graph} for split graphs. We conclude the paper by outlining some directions for future research.

\section{Structural Properties of the \texorpdfstring{$k$}{k}-th Binding Number}\label{sec:properties}
We begin this section with some observations derived from the definition of the $k$-th binding number. If $G=(X, Y; E)$ is a bipartite graph with $|G| \ge k$, consider a vertex set $S$ of size $k$ intersecting both $X$ and $Y$. Since no vertex of $G$ can be adjacent to all of $S$, $\Lambda^k(S)=\emptyset$, and hence
 $\beta^k(G) =0$. We also note that for any graph $G$ with $|G| \ge k$ and for any $k$-set $S\subset V(G)$, $\Lambda^k(S)\cap S = \emptyset$. Hence $|G| < 2k$ implies $\beta^k(G) < 1$. 
 We generalize these two observations as follows.

\begin{observation}\label{obs-1}
Let $k \ge 2$ be an integer and $G$ be a graph of order $n$. If $\beta^k(G) > 0$, then $G$ is not bipartite. 
In addition, $\beta^k(G) \le \frac{n-k}{k}$, and consequently, if $\beta^k(G) \ge 1$, then $n \ge 2k$.
\end{observation}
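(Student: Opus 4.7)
The plan is to prove both parts of the observation by exhibiting explicit subsets $S$ with small $|\Lambda^k_G(S)|/|S|$, following the strategy already sketched in the paragraph preceding the observation.

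For the first part, I would argue the contrapositive: assuming $G$ is bipartite with parts $(X,Y)$, I will show $\beta^k(G) = 0$. If $|G| < k$ this is immediate by the convention. Otherwise $n \ge k$, so a $k$-subset $S \subseteq V(G)$ exists. If one of $X,Y$ is empty then $G$ has no edges and any such $S$ gives $\Lambda^k_G(S) = \emptyset$. If both parts are non-empty, then since $k \ge 2$ I can choose $S$ to meet each side of the bipartition; then any candidate $v \in \Lambda^k_G(S)$ would need $k$ neighbors in $S$, but a vertex on one side of the bipartition has no neighbors on its own side, so it has at most $\max(|S \cap X|,|S \cap Y|) \le k-1$ neighbors in $S$, forcing $\Lambda^k_G(S) = \emptyset$. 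In every case $\Lambda^k_G(S) = \emptyset \ne V(G)$, and hence $\beta^k(G) \le 0$.

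For the second part, I would take an arbitrary $k$-subset $S \subseteq V(G)$ (valid whenever $n \ge k$; otherwise the statement is vacuous for the intended application since $\beta^k(G) \ge 1$ already forces $n \ge k$ by the convention). The key point is that $\Lambda^k_G(S) \cap S = \emptyset$: every $v \in S$ has at most $|S \setminus \{v\}| = k-1$ neighbors in $S$ and is therefore excluded from $\Lambda^k_G(S)$. This simultaneously yields $|\Lambda^k_G(S)| \le n - k$ and $\Lambda^k_G(S) \ne V(G)$ (since $S$ is nonempty and disjoint from $\Lambda^k_G(S)$), so $S$ qualifies in the defining minimum of $\beta^k(G)$. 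Hence $\beta^k(G) \le (n-k)/k$, and the hypothesis $\beta^k(G) \ge 1$ rearranges to $n \ge 2k$.

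There is no substantive obstacle; the only care needed is with the boundary convention $\beta^k(G) = 0$ for $|G| < k$ and with the degenerate case of a bipartition having an empty part. Both are minor bookkeeping and do not require any deep structural input.
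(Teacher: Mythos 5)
Your proposal is correct and follows essentially the same route as the paper, which proves this observation with exactly the two witnesses you use: a $k$-set $S$ meeting both sides of the bipartition (forcing $\Lambda^k_G(S) = \emptyset$, hence $\beta^k(G) = 0$) and an arbitrary $k$-set $S$ together with the fact that $\Lambda^k_G(S) \cap S = \emptyset$ (giving $|\Lambda^k_G(S)| \le n-k$ and the bound $\beta^k(G) \le \frac{n-k}{k}$). Your extra bookkeeping for the convention $|G| < k$ and for a bipartition with an empty part is harmless and merely makes explicit degenerate cases the paper leaves implicit.
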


We state a heritage property below.

\begin{observation}\label{obs:k_to_k-1}
    Let $k \ge 2$ be an integer. Every graph $G$ satisfies $\beta^i(G) \ge \beta^k(G)$ for each  $i \in [k]:= \{1,2, \dots, k\}$.
\end{observation}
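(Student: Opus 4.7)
The plan is to show that any set $S$ witnessing $\beta^i(G)$ either already works in the definition of $\beta^k(G)$ or can be inflated into one that does. Fix $i \in [k]$ and pick $S \subseteq V(G)$ with $|S| \geq i$ and $\Lambda^i_G(S) \neq V(G)$; the goal is $|\Lambda^i_G(S)|/|S| \geq \beta^k(G)$. Before the main argument, I would dispose of the degenerate case $|V(G)| < k$: here the convention gives $\beta^k(G) = 0$, so the inequality follows automatically from the nonnegativity of $\beta^i(G)$.

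For the main argument I would split on the size of $S$. The easy case is $|S| \geq k$. Since being adjacent to at least $k$ vertices of $S$ implies being adjacent to at least $i$ of them, we have $\Lambda^k_G(S) \subseteq \Lambda^i_G(S)$. The hypothesis $\Lambda^i_G(S) \neq V(G)$ then transfers: any vertex with fewer than $i$ neighbors in $S$ has in particular fewer than $k$ neighbors in $S$, so $\Lambda^k_G(S) \neq V(G)$ as well. Thus $S$ is admissible in the definition of $\beta^k(G)$, and $|\Lambda^i_G(S)|/|S| \geq |\Lambda^k_G(S)|/|S| \geq \beta^k(G)$.

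The more delicate case is $i \leq |S| < k$, where $S$ itself is too small to appear in the definition of $\beta^k(G)$. Here I would extend $S$ arbitrarily to a set $S' \supseteq S$ of size exactly $k$, which is possible since $|V(G)| \geq k$. A counting argument is the key: any $v \in \Lambda^k_G(S')$ can have at most $|S' \setminus S| = k - |S|$ neighbors outside $S$, so at least $|S| \geq i$ neighbors inside $S$, giving $\Lambda^k_G(S') \subseteq \Lambda^i_G(S)$. Symmetrically, any vertex witnessing $\Lambda^i_G(S) \neq V(G)$ has fewer than $i + (k - |S|) \leq k$ neighbors in $S'$, so $\Lambda^k_G(S') \neq V(G)$. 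Applying the definition of $\beta^k(G)$ to $S'$ yields $|\Lambda^i_G(S)| \geq |\Lambda^k_G(S')| \geq k\beta^k(G) \geq |S|\beta^k(G)$, and dividing by $|S|$ closes the case.

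I do not expect any serious obstacle; the only nontrivial point is the small-$S$ case, and the inflation-to-a-$k$-set trick handles it cleanly. The rest amounts to unwinding the definitions of $\Lambda^i_G$ and $\Lambda^k_G$.
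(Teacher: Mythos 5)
Your proof is correct and takes essentially the same route as the paper's: the same case split on $|S| \ge k$ versus $i \le |S| < k$, with the same inflation of $S$ to a $k$-set $S'$ and the same key containment $\Lambda^k_G(S') \subseteq \Lambda^i_G(S)$ (your counting argument is just a rephrasing of the paper's observation that each $v \in \Lambda^k_G(S')$ satisfies $S' \subseteq \Lambda_G(v)$). The only difference is cosmetic: you explicitly verify the admissibility conditions $\Lambda^k_G(S) \ne V(G)$ and $\Lambda^k_G(S') \ne V(G)$ before invoking the definition of $\beta^k(G)$, a check the paper leaves implicit.
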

\begin{proof} The statement is trivial if $\beta^k(G) = 0$ or $i = k$, so we assume $\beta^k(G) > 0$ and $i\in [k-1]$. By the definition of $\beta^k(G)$, we have $|G| \ge k > i$.
Let $S\subset V(G)$ with $|S| \ge i$ and $\Lambda^i(S) \ne V(G)$. If $|S| \ge k$, then $\Lambda^i(S) \supseteq \Lambda^k(S)$, so $|\Lambda^i(S)| \ge |\Lambda^k(S)| \ge \beta^k(G) |S|$.  
Hence, we may assume $|S| < k$. Let $S'\supset S$ with $|S'| =k$.   Since $|S'| =k$, $S' \subseteq \Lambda_G(v)$ for every $v\in \Lambda^k(S')$, which in turn gives $\Lambda^k(S') \subseteq \Lambda^i(S)$. Hence $|\Lambda^i(S)|\ge |\Lambda^k(S')| \ge \beta^k(G) |S'| \ge \beta^k(G) |S|$. 
\end{proof}

\begin{observation}\label{obs:connected-k-2}
  Every graph $G$ with $\beta^2(G) > 0$ is connected.
\end{observation}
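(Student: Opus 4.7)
The plan is to prove the contrapositive: if $G$ is disconnected, then $\beta^2(G) = 0$. Since $\beta^2(G) > 0$ requires $|G| \ge 2$ by definition, the case $|G| \le 1$ is vacuous, so I may assume $G$ has at least two vertices.

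The idea is to exhibit a witness set $S$ of size $2$ with empty second neighborhood that does not equal $V(G)$. Suppose $G$ is disconnected and let $C_1, C_2$ be two distinct connected components of $G$. Pick any $u \in C_1$ and $v \in C_2$, and set $S = \{u, v\}$. Any vertex $w \in \Lambda^2_G(S)$ would have to be adjacent to both $u$ and $v$, but this forces $u$ and $v$ to lie in the same component as $w$, contradicting the choice of $S$. Hence $\Lambda^2_G(S) = \emptyset$, and in particular $\Lambda^2_G(S) \ne V(G)$ since $|V(G)| \ge 2 > 0$. Therefore $S$ is admissible in the definition of $\beta^2(G)$ and witnesses $\beta^2(G) \le |\Lambda^2_G(S)|/|S| = 0$, giving $\beta^2(G) = 0$, a contradiction.

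There is essentially no obstacle here; the only thing to be careful about is the boundary convention $\beta^2(G) := 0$ when $|G| < 2$, which I already invoked to reduce to the case $|G| \ge 2$. The argument uses only the definition of $\Lambda^2$ (a common neighbor of two vertices must lie in their shared component) together with the fact that disconnectedness produces a pair in different components.
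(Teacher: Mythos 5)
Your proof is correct and follows the same route as the paper's one-line argument: pick a $2$-set $S$ meeting two distinct components, observe $\Lambda^2_G(S)=\emptyset$, and conclude $\beta^2(G)=0$. The only difference is that you explicitly verify the admissibility condition $\Lambda^2_G(S)\ne V(G)$ and the boundary convention $|G|\ge 2$, details the paper leaves implicit.
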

\begin{proof}
    In a disconnected graph, we can pick a $2$-set $S$ intersecting two distinct components. By the choice of $S$, $\Lambda^2(S) = \emptyset$, which implies $\beta^2(G) = 0$.
\end{proof}

Combining Observations~\ref{obs:k_to_k-1} and~\ref{obs:connected-k-2}, we deduce the following:
\begin{corollary}\label{cor:connected}
    For every integer $k \ge 2$, every graph $G$ with $\beta^k(G) > 0$ is connected.    
\end{corollary}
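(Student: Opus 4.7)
The plan is to derive this corollary immediately by chaining the two preceding observations, using $i=2$ as the pivot. Since the statement assumes $k \ge 2$, the index $i = 2$ lies in $[k]$, so Observation~\ref{obs:k_to_k-1} applies and yields $\beta^2(G) \ge \beta^k(G)$. The hypothesis $\beta^k(G) > 0$ then forces $\beta^2(G) > 0$, at which point Observation~\ref{obs:connected-k-2} concludes that $G$ is connected.

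One small sanity check I would include: Observation~\ref{obs:k_to_k-1} is stated for graphs with $|G| \ge k$ (this is implicit in the proof, which invokes the definition of $\beta^k$), and this is automatic here because $\beta^k(G) > 0$ requires that there be at least one subset $S \subseteq V(G)$ of size $k$, hence $|G| \ge k \ge 2$. So no degenerate case arises.

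There is no real obstacle; the corollary is essentially a one-line consequence once the observations are in place. The only thing to be careful about is that $k = 2$ is the base case of Observation~\ref{obs:k_to_k-1} (where the chain $\beta^2(G) \ge \beta^k(G)$ is just equality), and that case still reduces correctly to Observation~\ref{obs:connected-k-2}. So the proof is simply: apply Observation~\ref{obs:k_to_k-1} with $i = 2$, then apply Observation~\ref{obs:connected-k-2}.
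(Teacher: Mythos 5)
Your proposal is correct and matches the paper exactly: the paper derives Corollary~\ref{cor:connected} by precisely this one-line chaining, applying Observation~\ref{obs:k_to_k-1} with $i=2$ to get $\beta^2(G) \ge \beta^k(G) > 0$ and then invoking Observation~\ref{obs:connected-k-2}. Your sanity check that $|G| \ge k$ holds (via the convention $\beta^k(G) := 0$ when $|G| < k$) is a harmless extra precaution.
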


However, for $k = 1$, the condition $\beta^1(G) > 0$ does not imply that $G$ is connected; it only ensures that $G$ has no isolated vertices. A specific example is the disjoint union of two complete graphs $K_s$ and $K_t$ with $s, t \ge 2$.

The next lemma gives a lower bound on $\delta(G)$, which will play a role in the subsequent sections.

\begin{lemma}\label{lem:minimum-degree-with-k}
For an integer $k \ge 2$ and for a graph $G$ of order $n$, let $\beta := \beta^k(G)$. Then the following statements hold.  

\begin{itemize}
    \item[(1)] If $\beta >0$, then 
    $\delta(G) \ge (\beta + 1)k - 1$, and

    \item[(2)] if $\beta \ge 1$, then
    $\delta(G) \ge \max\left\{(\beta + 1)k - 1, n-\dfrac{n-1}{\beta}\right\}$.

\end{itemize}
\end{lemma}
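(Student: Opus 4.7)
The plan is to pick a vertex $v$ realizing $d_G(v) = \delta(G)$ and to construct test sets $S$ containing $v$ to which the $\beta^k$-condition applies. The core observation I would exploit is that if $|S| = k$, $v \in S$, and $v$ has at most $k-1$ neighbors in $S$, then every $u \in \Lambda^k_G(S)$ must be adjacent to all $k$ vertices of $S$, which forces $u \in \Lambda_G(v) \setminus S$; comparing the resulting upper bound $|\Lambda^k_G(S)| \le |\Lambda_G(v) \setminus S|$ with the defining lower bound $|\Lambda^k_G(S)| \ge \beta k$ is what will produce the degree inequality.

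For part~(1), I would first dispose of the degenerate case $\delta \le k-2$. Since $\beta > 0$ forces $n \ge k$ by Observation~\ref{obs-1}, I can choose $B \subseteq V(G) \setminus (\Lambda_G(v) \cup \{v\})$ with $|B| = k - 1 - \delta$ and set $A := \Lambda_G(v) \cup B$ and $S := \{v\} \cup A$; then $|S| = k$ and $\Lambda_G(v) \subseteq A \subseteq S$, so any $u \in \Lambda^k_G(S)$ must lie in $\Lambda_G(v) \setminus S = \emptyset$, forcing $\Lambda^k_G(S) = \emptyset$ and hence $\beta = 0$, a contradiction. With $\delta \ge k-1$ secured, I would take any $A \subseteq \Lambda_G(v)$ with $|A| = k-1$ and again set $S := \{v\}\cup A$; now $v$ has exactly $k-1$ neighbors in $S$ (so $v \notin \Lambda^k_G(S)$), and $|\Lambda^k_G(S)| \le |\Lambda_G(v) \setminus A| = \delta - (k-1)$. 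Combining with $|\Lambda^k_G(S)| \ge \beta k$ yields $\delta \ge (\beta+1)k - 1$.

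For part~(2), the first inequality $\delta \ge (\beta+1)k - 1$ is inherited from~(1); the remaining task is $\delta \ge n - \tfrac{n-1}{\beta}$. Here I would use a different test set and split on $\delta$. If $\delta \le n - k$, I would take $S := V(G) \setminus \Lambda_G(v)$, which has size $n-\delta \ge k$ and contains $v$ with zero neighbors in $S$, so $v \notin \Lambda^k_G(S)$ and $|\Lambda^k_G(S)| \le n-1$. The binding condition then gives $\beta(n-\delta) \le n-1$, i.e.\ $\delta \ge n - \tfrac{n-1}{\beta}$. If instead $\delta \ge n - k + 1$, then $n - \delta \le k - 1$, so the desired inequality reduces to $\beta \le \tfrac{n-1}{k-1}$, which follows from Observation~\ref{obs-1}'s bound $\beta \le \tfrac{n-k}{k}$ after a short calculation using $k \ge 2$.

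The main obstacle will not be any single computation but the clean handling of the two boundary cases: the test set in~(1) requires $\Lambda_G(v)$ to supply $k-1$ vertices, and the test set in~(2) requires $V(G) \setminus \Lambda_G(v)$ to have size at least $k$. Both corner cases should be defused by separate but easy arguments leveraging Observation~\ref{obs-1}.
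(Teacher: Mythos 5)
Your proposal is correct and takes essentially the same approach as the paper: for part (1) you use the same test set $S = \{v\} \cup A$ with $A \subseteq \Lambda_G(v)$, $|A| = k-1$, and the containment $\Lambda^k_G(S) \subseteq \Lambda_G(v) \setminus S$ (the paper merely folds your $\delta \le k-2$ corner case into a single maximal choice of $S$), and for part (2) your argument with $S = V(G) \setminus \Lambda_G(v)$ and the boundary estimate $\beta \le \frac{n-k}{k} \le \frac{n-1}{k-1}$ is verbatim the paper's.
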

\begin{proof}

Suppose $\beta > 0$. Then $n \ge k$, and choose an arbitrary $v\in V(G)$. Let $S$ be a $k$-set containing $v$ and maximizing $|S \cap \Lambda(v)|$. That is, if $\Lambda(v) \setminus S \neq \emptyset$, then $S \subseteq \Lambda(v)\cup \{v\}$. Since $v \in S$, $|S| = k$ and $\beta > 0$, it follows that $\emptyset \neq \Lambda^k(S) \subseteq \Lambda(v)\setminus S$. Therefore $S \subseteq \Lambda(v) \cup \{v\}$, and we conclude that $|\Lambda(v)| \ge |S|-1+|\Lambda^k(S)| \ge k-1+\beta|S| = (\beta+1)k-1$. This proves $(1)$.

Suppose $\beta \ge 1$ and let $v\in V(G)$ be an arbitrary vertex. By Observation~\ref{obs-1}, $\beta \le \frac{n-k}k \le \frac{n-1}{k-1}$, and so $k-1\le \frac{n-1}{\beta}$. If $n - d(v) \le k-1$, then $d(v) \ge  n-(k-1) \ge n-\frac{n-1}{\beta}$. Suppose $n-d(v) \ge k$. Since $ \Lambda^k(V(G-\Lambda(v)))  \subseteq V(G-v)$, it follows that $n-1 \ge |\Lambda^k(V(G -\Lambda(v)))| \ge \beta (n -d(v))$, which in turn gives $d(v) \ge n -\frac{n-1}{\beta}$. So, $(2)$ holds. 
\end{proof}

Applying (1) of Lemma~\ref{lem:minimum-degree-with-k} with $\beta^k(G) \ge 1$, we get the following corollary. 
\begin{corollary}
    Let $k \ge 2$ be an integer. If $G$ is a graph on $2k$ vertices with $\beta^k(G) \ge 1$, then $G$ is complete.
\end{corollary}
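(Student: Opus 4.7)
The plan is to read the corollary as an almost immediate consequence of part~(1) of Lemma~\ref{lem:minimum-degree-with-k}. With $n = 2k$ and $\beta := \beta^k(G) \ge 1$, that lemma gives
\[
\delta(G) \ge (\beta+1)k - 1 \ge 2k - 1 = n-1,
\]
so every vertex of $G$ has degree $n-1$, forcing $G \cong K_{2k}$. This is the whole proof, and the only question is whether the hypotheses of the lemma are satisfied — they are, since $\beta \ge 1 > 0$ is exactly what (1) needs.

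If one wanted to avoid quoting the lemma, the direct argument is equally short: pick any $v \in V(G)$ and let $S$ be a $k$-set containing $v$ that maximizes $|S \cap \Lambda(v)|$. Since $|S| = k$ and $\beta^k(G) \ge 1$, either $\Lambda^k(S) = V(G)$ (impossible, as $S \cap \Lambda^k(S) = \emptyset$) or $|\Lambda^k(S)| \ge k$. In the latter case $\Lambda^k(S)$ is a set of $k$ vertices disjoint from $S$, each adjacent to every vertex of $S$ and in particular to $v$; combined with the $k-1$ vertices of $S \setminus \{v\}$, which are also forced into $\Lambda(v)$ by the maximality of $S$, this yields $d(v) \ge (k-1)+k = 2k-1$.

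I do not anticipate any obstacle here: with $n = 2k$ the minimum-degree bound from Lemma~\ref{lem:minimum-degree-with-k}(1) is already tight enough to conclude completeness, and no case analysis, Tutte-type machinery, or additional structural argument is needed.
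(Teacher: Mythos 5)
Your proposal is correct and follows exactly the paper's route: the paper introduces this corollary with the remark that it is obtained by ``applying (1) of Lemma~\ref{lem:minimum-degree-with-k} with $\beta^k(G) \ge 1$,'' which is precisely your one-line argument $\delta(G) \ge (\beta+1)k - 1 \ge 2k-1 = n-1$. Your optional direct argument is also sound, but it merely unfolds the paper's own proof of Lemma~\ref{lem:minimum-degree-with-k}(1), so it does not constitute a different approach.
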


The {\em independence number} $\alpha(G)$ is defined as the size of a largest independent set in a graph $G$, that is, a set of pairwise non-adjacent vertices. We have the following upper bound on $\alpha(G)$ in terms of $n$, $k$ and $\beta^k(G)$.

\begin{proposition}\label{lem:k-independence}
Let $k \ge 2$ be an integer, and let $G$ be a graph of order $n$ with $\beta^k(G) = \beta$. Then $\alpha(G) \le \frac{1}{\beta+1}n$. Moreover, if $\beta \ge 1$, then $\alpha(G) \le \frac{n-\beta(k-1)}{\beta+1}$.
\end{proposition}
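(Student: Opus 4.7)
The plan is to let $I$ be a maximum independent set in $G$, with $|I|=\alpha(G)$, and apply the defining inequality of $\beta^k(G)$ to carefully chosen supersets of $I$. The key observation throughout is that if $I$ is independent, then any vertex of $I$ has $0$ neighbors in $I$, so it cannot lie in $\Lambda^k(S)$ whenever the part of $S$ outside $I$ contributes fewer than $k$ neighbors.

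For the first bound, I would first dispose of the case $\beta=0$ as trivial, so assume $\beta>0$. If $|I|\ge k$, then directly apply $S=I$: independence forces $\Lambda^k(I)\subseteq V(G)\setminus I$, so in particular $\Lambda^k(I)\ne V(G)$, and the definition of $\beta^k(G)$ yields $\beta|I|\le|\Lambda^k(I)|\le n-|I|$, which rearranges to $|I|\le n/(\beta+1)$. If instead $|I|<k$, I invoke Lemma~\ref{lem:minimum-degree-with-k}(1): $\delta(G)\ge(\beta+1)k-1$ implies $n\ge(\beta+1)k$, and therefore $|I|<k\le n/(\beta+1)$.

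For the sharper bound when $\beta\ge 1$, the idea is to pad $I$ with $k-1$ outside vertices so that the defining inequality can be applied to a larger set. Concretely, assuming $|I|\ge 1$ (else the bound is immediate), pick any $K\subseteq V(G)\setminus I$ with $|K|=k-1$; such a $K$ exists because Lemma~\ref{lem:minimum-degree-with-k}(1) with $\beta\ge 1$ forces every vertex of $I$ to have at least $2k-1$ neighbors in $V(G)\setminus I$, whence $|V(G)\setminus I|\ge 2k-1\ge k-1$. Set $S=I\cup K$, so $|S|=|I|+k-1\ge k$. Since each $v\in I$ has no neighbors in $I$ and at most $|K|=k-1$ neighbors in $K$, we get $\Lambda^k(S)\cap I=\emptyset$, hence $\Lambda^k(S)\ne V(G)$. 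The definition of $\beta^k(G)$ then gives $\beta(|I|+k-1)\le|\Lambda^k(S)|\le n-|I|$, which rearranges to $|I|\le\dfrac{n-\beta(k-1)}{\beta+1}$.

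There is no real obstacle here; the only points requiring care are (i) verifying $|S|\ge k$ and $\Lambda^k(S)\ne V(G)$ so that the definition of $\beta^k(G)$ legitimately applies, and (ii) ensuring there is room outside $I$ to choose the padding set $K$, which is where the minimum-degree consequence of $\beta\ge 1$ from Lemma~\ref{lem:minimum-degree-with-k} is used. Everything else is straightforward rearrangement.
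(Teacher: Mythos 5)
Your proof is correct and follows essentially the same approach as the paper's: apply the defining inequality of $\beta^k(G)$ to $I$ itself when $|I|\ge k$, and to $I$ padded with $k-1$ vertices from $V(G)\setminus I$ for the refined bound when $\beta\ge 1$. The only cosmetic difference is that you route the small auxiliary facts ($k\le n/(\beta+1)$ and the existence of the padding set) through Lemma~\ref{lem:minimum-degree-with-k}(1), whereas the paper uses Observation~\ref{obs-1} ($\beta\le\frac{n-k}{k}$, hence $n\ge 2k$) for both; either works.
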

\begin{proof}
If $\beta=0$, the result is clear, so suppose $\beta > 0$. Let $I$ be a maximum independent set in $G$. 

If $|I| \le k-1$, then $|I| <  k \le \frac{n}{\beta +1}$ as $\beta \le \frac{n-k}{k}$ by Observation~\ref{obs-1}.
If $|I| \ge k$, then $\Lambda^k(I) \subseteq V(G) \setminus I$, so $n - |I| \ge |\Lambda^k(I)| \ge \beta|I|$. Solving for $|I|$ yields $|I| \le \frac{n}{\beta+1}$. In both cases, $\alpha(G) = |I| \le \frac{1}{\beta+1}n$.   
    
Now suppose $\beta \ge 1$. We have $|I| \le \frac 1{\beta+1} n \le \frac n2$, and by Observation~\ref{obs-1}, $n \ge 2k$. So $|V(G)\setminus I| \ge \frac n2 \ge k$. Let $I' = I \cup W$, where $W \subseteq V(G) \setminus I$ satisfies $|W| = k-1$. Since $|I| \ge 1$, then $|I'| \ge k$. As each vertex in $I$ has no neighbors within $I$, it can have at most $k-1$ neighbors in $I'$. This implies that $\Lambda^k(I') \subseteq V(G) \setminus I$.
Hence $n - |I| \ge |\Lambda^k(I')| \ge \beta|I'| = \beta(|I|+k-1)$. Solving for $|I|$, we obtain $|I| \le \frac{n-\beta(k-1)}{\beta + 1}$.
\end{proof}

A graph $G$ is said to be {\em $t$-tough} if $|S| \ge t \cdot c(G-S)$ for any subset $S \subseteq V(G)$ with $c(G-S) > 1$, where $c(G-S)$ denotes the number of connected components of $G-S$. The {\em toughness} of $G$, denoted $\tau(G)$, is the maximum value of $t$ for which $G$ is $t$-tough. For completeness, it is customary to define $\tau(K_n) = \infty$. The following proposition relates the toughness of a graph to its $k$-th binding number.
\begin{proposition}\label{prop:toughness} 
For any integer $k \geq 2$, every graph $G$ satisfies $\tau(G) \ge \beta^k(G)$. 
\end{proposition}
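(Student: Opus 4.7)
The plan is to show directly that for every $S \subseteq V(G)$ with $c := c(G-S) \ge 2$ one has $|S| \ge \beta c$, where $\beta = \beta^k(G)$. I may assume $\beta > 0$ (otherwise the claim is trivial), so by Observation~\ref{obs-1} I also have $\beta \le \frac{n-k}{k}$. Fix such an $S$ and let $C_1, \dots, C_c$ be the components of $G-S$; note that $S \subsetneq V(G)$.

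The central idea is to apply the definition of $\beta^k(G)$ to a probe set $T \subseteq V(G) \setminus S$ satisfying $|T \cap C_j| \le k-1$ for every $j$. Under this cap, any $u \in V(G) \setminus S$ lies in a single component $C_{j_0}$, and since $T$ avoids $S$ all of $u$'s neighbors in $T$ lie in $T \cap C_{j_0}$, of size at most $k-1$; hence $\Lambda^k_G(T) \subseteq S \subsetneq V(G)$. Provided $|T| \ge k$, the definition of $\beta^k(G)$ then gives $|S| \ge |\Lambda^k_G(T)| \ge \beta |T|$, and choosing $|T| \ge c$ completes the bound.

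The main step is therefore to exhibit such a $T$ of size at least $\max(c,k)$. The maximum attainable size of a capped probe set is $M := \sum_{j=1}^c \min(|C_j|, k-1)$; since each component is nonempty, $M \ge c$ automatically. When $c \ge k$, simply picking one vertex per component yields $|T| = c \ge k$. When $c < k$ but $M \ge k$, I would greedily distribute exactly $k$ vertices among the components while respecting the cap, obtaining $|T| = k$ and hence $|S| \ge \beta k \ge \beta c$.

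The one place the probe argument fails, and therefore the main obstacle, is the residual case $c < k$ and $M < k$. I will clear it by a short counting argument: if some component had $|C_j| \ge k-1$, then $M \ge (k-1) + (c-1) \ge k$ because $c \ge 2$, contradicting $M < k$; hence $|C_j| \le k-2$ for every $j$, and so $|V(G) \setminus S| = \sum_j |C_j| = M < k$, giving $|S| > n-k$. Combining this with the bound $\beta \le \frac{n-k}{k}$ and the assumption $c < k$, I obtain $\beta c < \frac{n-k}{k} \cdot k = n-k < |S|$, which completes the proof.
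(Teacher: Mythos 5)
Your proof is correct. Every step checks out: the capped probe set $T \subseteq V(G) \setminus S$ with $|T \cap C_j| \le k-1$ does force $\Lambda^k_G(T) \subseteq S \subsetneq V(G)$ (any vertex outside $S$ has all its $T$-neighbors inside its own component's slice of $T$), so once $|T| \ge \max(c,k)$ the definition of $\beta^k(G)$ yields $|S| \ge \beta|T| \ge \beta c$; and your residual case ($c < k$ and $\sum_j \min(|C_j|,k-1) < k$) is cleanly closed by noting all components then have at most $k-2$ vertices, whence $|S| > n-k$, while $\beta c < \frac{n-k}{k}\cdot k = n-k$ by Observation~\ref{obs-1} (the needed $n > k$ follows from $\beta > 0$). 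The complete-graph case, where no cut set with $c \ge 2$ exists, is vacuous since $\tau(K_n) = \infty$.

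Your route is genuinely different from the paper's, though, and it is worth seeing why. The paper dispatches the entire proposition in a few lines by first invoking the heredity property $\beta^2(G) \ge \beta^k(G)$ (Observation~\ref{obs:k_to_k-1}), which reduces everything to $k = 2$: then a transversal $S$ with exactly one vertex per component of $G-U$ satisfies $\Lambda^2(S) \subseteq U$, and since $|S| = c(G-U) \ge 2$ the definition applies immediately with no size obstruction and no case analysis. You instead work directly at level $k$, which forces you to inflate the transversal to size at least $k$ under the per-component cap and to handle the degenerate situation where even the capped maximum falls short of $k$; that costs you the three-way case split and an appeal to the upper bound $\beta^k(G) \le \frac{n-k}{k}$ from Observation~\ref{obs-1}. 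What your argument buys is self-containment: it does not rely on the heredity observation (whose proof itself requires a small padding trick), and it shows concretely how the $k$-th neighborhood condition interacts with cut sets without passing through $\beta^2$. The paper's reduction is the slicker proof and generalizes effortlessly whenever a statement is monotone in $k$; your direct argument is a reasonable template in settings where such heredity is unavailable.
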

\begin{proof}
     Since $\tau(G)$ is nonnegative, the inequality $\tau(G) \ge \beta^k(G)$ holds trivially if $\beta^k(G) = 0$. Assume $\beta^k(G) > 0$, and hence $|G| \ge k$. 
     It suffices to show that $\tau(G) \ge \beta^2(G)$ as $\beta^2(G) \ge \beta^k(G)$ by Observation~\ref{obs:k_to_k-1}.
     We may also assume that $G$ is neither disconnected nor complete, since otherwise the inequality $\tau(G) \ge \beta^2(G)$ follows immediately. Let $U \subseteq V(G)$ be a vertex set such that $G - U$ is disconnected, and let its components be $G_1, G_2, \dots, G_m$, where $m = c(G - U) \ge 2$. For each $i$, select a vertex $v_i \in V(G_i)$, and let $S = \{v_1, v_2, \dots, v_m\}$. By construction, $\Lambda^2(S) \subseteq U$, and thus $|U| \ge |\Lambda^2(S)| \ge \beta^2(G)|S| = \beta^2(G)m$. This shows that for every $U$ with $c(G - U) = m > 1$, we have $|U| \ge \beta^2(G) \cdot c(G - U)$, which implies $\tau(G) \ge \beta^2(G)$. 
\end{proof}

The {\em vertex connectivity} of a graph $G$, denoted $\kappa(G)$, is the minimum number of vertices whose removal either disconnects $G$ or reduces it to a single vertex. In general, a natural upper bound for $\kappa(G)$ is the minimum degree $\delta(G)$.
\begin{proposition}\label{prop:connectivity}
    Let $k \ge 2$ be an integer, and let $G$ be a graph of order $n$. If $\beta^k(G) \ge 1$, then $\kappa(G) \ge \frac{\beta^k(G) - 1}{\beta^k(G) + 1} n$. 
\end{proposition}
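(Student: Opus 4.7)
The plan is to relate a minimum vertex cut directly to the definition of $\beta^k(G)$. Set $\beta := \beta^k(G)$. If $G$ is complete then $\kappa(G)=n-1$, and Observation~\ref{obs-1} yields $\frac{\beta-1}{\beta+1}n \le n-2k \le n-1$, so the bound is immediate. Otherwise, I would take a minimum vertex cut $U$ with $|U|=\kappa(G)$ and order the components of $G-U$ as $C_1,\dots,C_m$ with $|C_1|\le\cdots\le|C_m|$ and $m\ge 2$; then set $A:=C_1$ and $B:=C_2\cup\cdots\cup C_m$, so both are nonempty and $|A|\le|B|$.

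The key observation is that, since no edge joins $A$ to $B$, each $S\in\{A,B\}$ satisfies $\Lambda^k(S)\subseteq S\cup U$ (any vertex outside $S\cup U$ lies in the other part and has no neighbor in $S$), and in particular $\Lambda^k(S)\ne V(G)$. When $|S|\ge k$, the definition of $\beta^k(G)$ gives $\beta|S|\le|\Lambda^k(S)|\le|S|+|U|$, i.e.\
\[
|U|\ge(\beta-1)|S|.
\]
From here I would split into three cases. If $|A|\ge k$ (so also $|B|\ge k$), applying this bound to both $A$ and $B$ and adding yields $2|U|\ge(\beta-1)(n-|U|)$, which rearranges to $(\beta+1)|U|\ge(\beta-1)n$, the desired inequality. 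If $|A|<k\le|B|$, applying the bound only to $B$ together with $|B|\ge n-|U|-(k-1)$ gives $\beta|U|\ge(\beta-1)(n-k+1)$. If $|A|,|B|<k$, then $n-|U|\le 2(k-1)$, so directly $|U|\ge n-2k+2$.

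In the latter two cases, the desired inequality $|U|\ge\frac{\beta-1}{\beta+1}n$ reduces in each case to the same algebraic condition $n\ge(\beta+1)(k-1)$, which follows from the bound $\beta+1\le n/k$ in Observation~\ref{obs-1}. The main obstacle is exactly these small-component cases: when $|A|<k$, the binding-number inequality is unavailable on the small side, so a ``$k-1$'' slack term appears that must be absorbed, and it is precisely the bound $\beta\le(n-k)/k$ from Observation~\ref{obs-1} that closes the gap. Everything else is routine algebra.
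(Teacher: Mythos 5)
Your proof is correct, and while it shares the paper's overall skeleton---take a minimum cut $U$, split $V(G)\setminus U$ into two nonempty sides with no edges between them, observe that $\Lambda^k(\text{side}) \subseteq \text{side} \cup U$, and add the two resulting inequalities---it differs in the key tool used. The paper immediately passes from $\beta^k$ to $\beta^2$ via the heredity property (Observation~\ref{obs:k_to_k-1}), so its estimate $|S| + |U| \ge \beta^2(G)|S| \ge \beta^k(G)|S|$ applies uniformly to any side of size at least $2$; the only exceptional case is a singleton side, which it dispatches with the minimum-degree bound $\delta(G) \ge n - \frac{n-1}{\beta}$ from Lemma~\ref{lem:minimum-degree-with-k}(2). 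You instead apply the definition of $\beta^k$ directly, which forces the three-way case analysis on whether each side has size at least $k$, and you close the small-side cases with the upper bound $\beta \le \frac{n-k}{k}$ from Observation~\ref{obs-1}, so that $n \ge (\beta+1)(k-1)$ absorbs the $k-1$ slack; your treatment of the complete graph (bounding $\frac{\beta-1}{\beta+1}n \le n-2k$ directly from Observation~\ref{obs-1}) likewise bypasses the paper's appeal to $\delta(G)$. The trade-off: your argument is self-contained modulo Observation~\ref{obs-1} and avoids both auxiliary results the paper leans on here, at the cost of extra casework, while the paper's $\beta^2$ pivot makes the main estimate shorter and confines the exception to singletons. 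One small point of hygiene: your case-2 reduction to $n \ge (\beta+1)(k-1)$ implicitly cancels a factor of $\beta-1$, which is legitimate only because the claimed bound is trivially $0$ when $\beta = 1$; it would be cleaner to dispose of $\beta^k(G) = 1$ at the outset, exactly as the paper does, or to phrase the sufficiency as $(\beta-1)\bigl[n - (\beta+1)(k-1)\bigr] \ge 0$.
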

\begin{proof}
    The inequality holds trivially when $\beta^k(G) = 1$, so we may assume $\beta^k(G) > 1$.
    Let $U \subseteq V(G)$ be a minimum vertex set such that $G - U$ consists of a single vertex or is disconnected. We claim that in both cases, $|U| \ge \min\left\{\frac{\beta^k(G) - 1}{\beta^k(G) + 1} n, \delta(G)\right\}$. In the former case, $G$ is complete and $|U| = n-1 = \delta(G)$. In the latter case, the set $V(G) \setminus U$ can be partitioned into at least two nonempty subsets $S$ and $T$ with no edges between them. We consider two subcases. Suppose first that one of $S$ or $T$ has size $1$. Without loss of generality, suppose $S = \{v\}$. Then the vertex $v$ must be adjacent only to vertices in $U$, and thus $|U| \ge |\Lambda(v)| \ge \delta(G)$.
    In the remaining case, both $|S| \ge 2$ and $|T| \ge 2$. We have $\Lambda^2(S) \subseteq S \cup U$, so $|S|+|U| \ge |\Lambda^2(S)| \ge \beta^2(G)|S| \ge \beta^k(G)|S|$ by Observation~\ref{obs:k_to_k-1}. Therefore $|S| \le \frac1{\beta^k(G)-1}|U|$. The same argument applies to $T$, so we obtain $n = |S|+|T|+|U| \le \left(1 + \frac2{\beta^k(G)-1}\right)|U|$. Solving for~$|U|$ yields $|U| \ge \frac{\beta^k(G) - 1}{\beta^k(G) + 1} n$. Combining the bounds from the two subcases, $|U| \ge \min\left\{\frac{\beta^k(G) - 1}{\beta^k(G) + 1} n, \delta(G)\right\}$. Also, by Lemma~\ref{lem:minimum-degree-with-k}$(2)$, we have $\delta(G) \ge n-\frac{n-1}{\beta^k(G)} > \frac{\beta^k(G)-1}{\beta^k(G)} n > \frac{\beta^k(G)-1}{\beta^k(G)+1} n$. Therefore $\kappa(G) = |U| \ge \frac{\beta^k(G)-1}{\beta^k(G)+1}n$. 
\end{proof}

\section{Disjoint Perfect and Near-perfect Matchings}\label{sec:1-factors}
\subsection{The Non-bipartite Case}\label{subsec:1-factors-nonbip}

Based on the parity of the order of the considered graphs, we prove two lemmas which serve as the basis for proving Theorem~\ref{thm:1-factors-nonbip}. We begin by recalling Tutte's theorem, a well-known result on the existence of a perfect matching. 

\begin{theorem}[Theorem A in~\cite{Tutte1954ASP}]\label{thm:Tutte-1}
    A graph $G$ contains a perfect matching if and only if $q_G(U) \le |U|$ for every subset $U \subseteq V(G)$, where $q_G(U)$ denotes the number of components of $G-U$ with odd order.   
\end{theorem}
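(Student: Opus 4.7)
The plan is to prove both directions of Tutte's theorem separately, with the nontrivial direction handled by an edge-maximality / symmetric-difference argument in the style of Lov\'asz.

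For necessity, I would argue directly. Suppose $G$ has a perfect matching $M$, and let $U \subseteq V(G)$. For each odd component $C$ of $G-U$, the restriction of $M$ to $V(C)$ covers at most $|V(C)|-1$ vertices (parity forbids a perfect matching on an odd set), so at least one vertex $v_C \in V(C)$ is matched by $M$ to a vertex $u_C$ outside $V(C)$. Since $V(C)$ is a union of components of $G-U$, the only edges leaving $V(C)$ in $G$ go into $U$, so $u_C \in U$. The map $C \mapsto u_C$ is injective, giving $q_G(U) \le |U|$.

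For sufficiency, assume the Tutte condition holds. Taking $U = \emptyset$ in the inequality forces $|V(G)|$ to be even. Suppose for contradiction that $G$ has no perfect matching. Adding an edge to a graph cannot increase $q(U)$ for any $U$, so I would enlarge $G$ to an edge-maximal graph $G^*$ on the same vertex set that still satisfies the Tutte condition and still has no perfect matching. Let $U^* \subseteq V(G^*)$ be the set of vertices adjacent to all others in $G^*$. The crucial structural claim is that every connected component of $G^* - U^*$ is a clique. If not, choose vertices $a,b,c$ in one component with $ab, bc \in E(G^*)$ but $ac \notin E(G^*)$; since $b \notin U^*$, there is some $d \ne a,b,c$ with $bd \notin E(G^*)$. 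By edge-maximality, $G^* + ac$ and $G^* + bd$ each admit perfect matchings $M_1$ and $M_2$. Analyzing the symmetric difference $M_1 \triangle M_2$, whose components are alternating cycles and paths through the edges $ac$ and $bd$, I would show that one can recombine the two matchings along these alternating substructures to produce a perfect matching of $G^*$ itself, contradicting the choice of $G^*$.

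Once every component of $G^* - U^*$ is a clique, I would finish by constructing a perfect matching of $G^*$ directly. The Tutte condition gives $q_{G^*}(U^*) \le |U^*|$, so I can match one vertex from each odd component to a distinct vertex of $U^*$ (possible because $U^*$-vertices are adjacent to everything), then pair the remaining vertices inside each component (each component is a clique and what is left in each is an even-sized set), and finally pair the leftover $|U^*|-q_{G^*}(U^*)$ vertices of $U^*$ among themselves. A parity check using $|V(G^*)|$ even shows this last set has even size. The resulting perfect matching contradicts the choice of $G^*$, completing the proof.

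I expect the main obstacle to be the structural claim that components of $G^* - U^*$ are complete: pushing through the $M_1 \triangle M_2$ recombination requires a careful case split depending on whether $ac$ and $bd$ lie on the same alternating cycle of $M_1 \triangle M_2$ or on different ones, and in each case one must exhibit an explicit perfect matching of $G^*$ using only original edges.
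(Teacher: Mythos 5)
The paper does not prove this statement at all: it imports it verbatim as Theorem~A of Tutte's 1954 paper (their Theorem~\ref{thm:Tutte-1}) and uses it as a black box, so there is no internal argument to compare yours against. Your proposal supplies the standard Lov\'asz edge-maximality proof, and it is essentially sound: the necessity direction via the injective map $C \mapsto u_C$ is exactly right, the observation that adding edges never increases $q_G(U)$ (merging two odd components kills two odd components; any other merge preserves the count) correctly justifies passing to an edge-maximal counterexample $G^*$, and the final construction --- matching one vertex of each odd component of $G^* - U^*$ into $U^*$, pairing the rest inside the clique components, and pairing the leftover vertices of $U^*$ (which are pairwise adjacent), with the parity check $|U^*| - q_{G^*}(U^*) \equiv |V(G^*)| \equiv 0 \pmod 2$ --- is complete and correct.

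Two points deserve attention. First, a small slip: since $M_1$ and $M_2$ are both \emph{perfect} matchings (of $G^* + ac$ and $G^* + bd$ respectively), every vertex has degree $0$ or $2$ in $M_1 \triangle M_2$, so its components are even alternating cycles only --- there are no path components. This actually simplifies your case split to exactly two cases: $bd$ lies on a cycle not containing $ac$ (then $M_2 \triangle C$, where $C$ is that cycle, is a perfect matching of $G^*$, since the $M_1$-edges substituted on $C$ avoid $ac$ and the retained $M_2$-edges avoid $bd$), or $ac$ and $bd$ lie on the same cycle. Second, that same-cycle case is the genuine crux and you leave it sketched; it does go through, by the standard rerouting that uses one of the edges $ab, bc \in E(G^*)$ to shortcut the cycle into an alternating structure avoiding both $ac$ and $bd$, but as written your proposal asserts rather than exhibits this recombination, so it is the one step you would still need to write out in full.
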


 When a graph has even order, we apply Tutte's theorem directly to guarantee a perfect matching.
\begin{lemma}\label{lem:1-factor-exist}
    Let $k \ge 2$ be an integer, and let $G$ be a graph of even order $n$. If $\beta^k(G) \ge 1$, then $G$ admits a perfect matching.
\end{lemma}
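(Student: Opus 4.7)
The plan is to apply Tutte's theorem (Theorem~\ref{thm:Tutte-1}) in the contrapositive, so I would assume for contradiction that $G$ has no perfect matching and fix a subset $U \subseteq V(G)$ with $q_G(U) > |U|$. Since $|V(G)|$ is even, the usual parity argument (each odd component contributes an odd number to $n - |U|$, and $n - |U|$ has the same parity as $|U|$) forces $q_G(U) \equiv |U| \pmod 2$, so in fact $q_G(U) \ge |U| + 2$. Let $G_1, G_2, \ldots, G_q$ denote the odd components of $G - U$, where $q = q_G(U)$, and choose one vertex $v_i \in V(G_i)$ from each to form the transversal $S = \{v_1, v_2, \ldots, v_q\}$.

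The structural step is to observe that $\Lambda^2_G(S) \subseteq U$. Indeed, any vertex outside $U$ lies in a single component of $G - U$, which contains at most one vertex of $S$; hence such a vertex can have at most one neighbor in $S$ and therefore does not lie in $\Lambda^2_G(S)$. Because $q \ge |U| + 2 \ge 2$, the set $S$ has size at least $2$. Moreover $\Lambda^2_G(S) \ne V(G)$: otherwise $U = V(G)$, forcing $q_G(U) = 0$, contradicting $q \ge 2$.

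At this point I would invoke the heritage property (Observation~\ref{obs:k_to_k-1}) to conclude $\beta^2(G) \ge \beta^k(G) \ge 1$, and apply it to the set $S$ above (which satisfies the conditions $|S| \ge 2$ and $\Lambda^2(S) \ne V(G)$) to obtain $|\Lambda^2_G(S)| \ge |S| = q$. Combining with the inclusion $\Lambda^2_G(S) \subseteq U$ then yields the chain
\[
|U| \ge |\Lambda^2_G(S)| \ge |S| = q \ge |U| + 2,
\]
a contradiction. Thus $q_G(U) \le |U|$ for every $U$, and Tutte's theorem produces the desired perfect matching. I do not anticipate any real obstacle: the only delicate point is confirming $\Lambda^2_G(S) \ne V(G)$, and the parity argument plus connectedness guaranteed by Corollary~\ref{cor:connected} make this immediate.
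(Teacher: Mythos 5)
Your proof is correct and follows essentially the same route as the paper's: reduce to $\beta^2(G) \ge 1$ via Observation~\ref{obs:k_to_k-1}, take a transversal $S$ of components of $G-U$, observe $\Lambda^2_G(S) \subseteq U$, and conclude via Theorem~\ref{thm:Tutte-1}. The only cosmetic differences are that you argue in the contrapositive with the parity refinement $q_G(U) \ge |U|+2$ (which also lets you handle $U = \emptyset$ uniformly, where the paper instead invokes connectedness from Corollary~\ref{cor:connected}), while the paper verifies Tutte's condition directly for every $U$ using a transversal of all components rather than just the odd ones.
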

\begin{proof}
    To prove this, it suffices to consider the case $k = 2$ by Observation~\ref{obs:k_to_k-1}. Let $U$ be any subset of $V(G)$. We claim that $q_G(U) \le |U|$. Suppose that $U = \emptyset$. By Corollary~\ref{cor:connected}, $G$ is connected, and since $n$ is even, we have $q_G(U) = 0 = |U|$, as desired. Now suppose that $|U| \ge 1$, and let $\alpha$ be the number of components of $G-U$. By definition, $q_G(U) \le \alpha$. If $\alpha \le 1$, then clearly $q_G(U) \le 1 \le |U|$. If $\alpha \ge 2$, let $S$ be a set obtained by selecting one vertex from each component of $G-U$, so $|S| = \alpha \ge 2$. By the choice of $S$, each vertex not in $U$ can be adjacent to at most one vertex in $S$. Thus $\Lambda^2_G(S) \subseteq U$, and hence $|U| \ge |\Lambda^2_G(S)|\ge \beta^2(G)|S| \ge \alpha \ge q_G(U)$. In all cases, we have $q_G(U) \le |U|$. By Theorem~\ref{thm:Tutte-1}, $G$ has a perfect matching.
\end{proof}

We next consider graphs of odd order. Instead of directly showing that a near-perfect matching exists, we focus on hypomatchability, a strictly stronger property. A graph $G$ is said to be {\em hypomatchable} if, for every vertex $v \in V(G)$, the graph $G-v$ has a perfect matching. Each such matching is a {\em near-perfect matching} in $G$. Obviously, any hypomatchable graph must have odd order.
To show hypomatchability, we first prove the following corollary of Tutte's theorem.

\begin{corollary}\label{cor:Tutte_hypomatchable}
A graph $G$ with odd order $n$ is hypomatchable if and only if $q_G(U) \le |U|-1$ for every nonempty set $U \subseteq V(G)$.
\end{corollary}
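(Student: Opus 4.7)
The plan is to deduce the corollary directly from Tutte's theorem (Theorem~\ref{thm:Tutte-1}) by a vertex-deletion reduction, exploiting the fact that hypomatchability is the statement that $G-v$ has a perfect matching for every $v \in V(G)$. Both directions boil down to converting between subsets of $V(G)$ and subsets of $V(G-v)$ by adding or removing a single vertex.

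For the forward direction, I would assume $G$ is hypomatchable and let $U \subseteq V(G)$ be nonempty. I would pick any vertex $v \in U$; since $G - v$ has a perfect matching, Tutte's theorem applied to $G-v$ gives $q_{G-v}(U') \le |U'|$ for every $U' \subseteq V(G-v)$. Taking $U' = U \setminus \{v\}$, I note that $(G - v) - U' = G - U$ as graphs, so $q_G(U) = q_{G-v}(U') \le |U'| = |U| - 1$, which is exactly the desired inequality.

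For the converse, assume $q_G(U) \le |U| - 1$ for every nonempty $U \subseteq V(G)$. Fix an arbitrary $v \in V(G)$; the graph $G - v$ has even order, and I need to verify Tutte's condition $q_{G-v}(U') \le |U'|$ for every $U' \subseteq V(G-v)$. Given such $U'$, set $U = U' \cup \{v\}$, which is nonempty. Then $(G - v) - U' = G - U$, so by hypothesis $q_{G-v}(U') = q_G(U) \le |U| - 1 = |U'|$. By Tutte's theorem, $G - v$ has a perfect matching, and since $v$ was arbitrary, $G$ is hypomatchable.

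The argument is essentially bookkeeping, so the only subtlety is ensuring both the bound and the set manipulations behave at the boundary $U' = \emptyset$ (equivalently $U = \{v\}$): here the hypothesis $q_G(\{v\}) \le 0$ forces every component of $G - v$ to be even, which is precisely what Tutte's condition demands with $U' = \emptyset$. I expect no real obstacle beyond keeping track of which graph each index refers to.
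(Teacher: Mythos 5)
Your proof is correct and follows essentially the same route as the paper's: both directions use the same vertex-deletion reduction, identifying $q_{G-v}(U\setminus\{v\})$ with $q_G(U)$ and invoking Tutte's theorem on $G-v$. Your remark about the boundary case $U'=\emptyset$ is a harmless extra check that the paper handles implicitly.
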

\begin{proof}
The case $n = 1$ is trivial, so we may assume $n \ge 3$.
Suppose that $G$ is hypomatchable.
Let $U$ be any nonempty subset of $V(G)$, and let $v \in U$. Denote $U' = U \setminus \{v\}$ and $G' = G - v$. Clearly, $U' \subseteq V(G')$. Since $G$ is hypomatchable, $G'$ contains a perfect matching. Hence $G'$ satisfies the condition in Theorem~\ref{thm:Tutte-1}, and in particular, $q_{G'}(U') \le |U'| = |U| - 1$. Observe that removing $U'$ from $G'$ is equivalent to removing $U$ from $G$, so $G'-U' = G-U$ and $q_{G'}(U') = q_G(U)$. Thus we have $q_G(U) \le |U| - 1$. 

Suppose that $G$ satisfies $q_G(U) \le |U|-1$ for every nonempty set $U \subseteq V(G)$. Let $v$ be any vertex in $V(G)$. We show that $G' = G-v$ satisfies the condition in Theorem~\ref{thm:Tutte-1}, which implies that $G'$ contains a perfect matching. Let $U'$ be any subset of $V(G')$. By definition, $v \notin U'$. Denote $U = U' \cup \{v\}$. Then $\emptyset \ne U \subseteq V(G)$. As before, $q_{G'}(U') = q_G(U)$, and so $q_{G'}(U') \le |U|-  1=|U'|$ by assumption. Since $v$ was arbitrary, $G$ is hypomatchable.
\end{proof}

We apply this result in the following lemma for the odd-order case.
\begin{lemma}\label{lem:hypomatchable}
    Let $k \ge 2$ be an integer, and let $G$ be a graph of odd order $n$. If $\beta^k(G) \ge 1$, then $G$ is hypomatchable.
\end{lemma}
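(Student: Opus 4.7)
The plan is to mirror the proof of Lemma~\ref{lem:1-factor-exist}, but use Corollary~\ref{cor:Tutte_hypomatchable} instead of Theorem~\ref{thm:Tutte-1}, then close a parity gap at the end. As in Lemma~\ref{lem:1-factor-exist}, I would first reduce to $k=2$ via Observation~\ref{obs:k_to_k-1}, since $\beta^2(G)\ge \beta^k(G)\ge 1$. The task is then to verify the hypothesis of Corollary~\ref{cor:Tutte_hypomatchable}: for every nonempty $U\subseteq V(G)$, $q_G(U)\le |U|-1$.

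Fix a nonempty $U\subseteq V(G)$ and let $\alpha:=c(G-U)$, so $q_G(U)\le \alpha$. I would first establish the weaker inequality $q_G(U)\le |U|$ by the same case split used in Lemma~\ref{lem:1-factor-exist}. If $\alpha\le 1$, then $q_G(U)\le 1\le |U|$ because $U$ is nonempty. If $\alpha\ge 2$, I pick one vertex from each component of $G-U$ to form a set $S$ with $|S|=\alpha\ge 2$; because any vertex of $G-U$ lies in a single component of $G-U$, it has at most one neighbor in $S$, so $\Lambda^2_G(S)\subseteq U$. The same observation shows $S\cap\Lambda^2_G(S)=\emptyset$, so in particular $\Lambda^2_G(S)\ne V(G)$ and the definition of $\beta^2(G)$ applies, giving
\[
|U|\;\ge\;|\Lambda^2_G(S)|\;\ge\;\beta^2(G)\,|S|\;\ge\;\alpha\;\ge\;q_G(U).
\]

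To upgrade this to $q_G(U)\le |U|-1$, I would invoke the standard parity identity: since the sum of the orders of the components of $G-U$ equals $n-|U|$ and even components contribute $0\pmod 2$, we have $q_G(U)\equiv n-|U|\pmod 2$. With $n$ odd, this forces $q_G(U)\not\equiv |U|\pmod 2$, hence $q_G(U)\ne |U|$, and combined with $q_G(U)\le |U|$ we conclude $q_G(U)\le |U|-1$. Applying Corollary~\ref{cor:Tutte_hypomatchable} yields that $G$ is hypomatchable.

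I do not expect any real obstacle. The only nonobvious step is realizing that a direct copy of the Lemma~\ref{lem:1-factor-exist} argument only yields $q_G(U)\le |U|$ and that the extra $-1$ comes essentially for free from the parity of $n$; the case $\alpha\le 1$ also has to be handled carefully since with $|U|=1$ one needs $q_G(U)=0$, which again follows from parity (the unique component of $G-U$ has $n-1$ vertices, which is even).
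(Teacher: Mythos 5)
Your proof is correct and takes essentially the same approach as the paper's: reduce to $k=2$ via Observation~\ref{obs:k_to_k-1}, select one vertex per component of $G-U$ to force $\Lambda^2(S)\subseteq U$ and bound $q_G(U)\le |U|$, and use the parity of $n$ to sharpen this to $q_G(U)\le |U|-1$ before invoking Corollary~\ref{cor:Tutte_hypomatchable}. The paper merely packages the same ideas as a proof by contradiction (applying the parity argument first and then picking one vertex from each \emph{odd} component), but the substance is identical.
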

\begin{proof}
    As in Lemma~\ref{lem:1-factor-exist}, we consider the case $k = 2$. For the sake of contradiction, suppose that the condition in Corollary~\ref{cor:Tutte_hypomatchable} is violated. Then there exists a nonempty set $U \subseteq V(G)$ such that $q_G(U) \ge |U|$. Denote the odd-order components of $G-U$ by $C_1, \dots, C_{q_G(U)}$, and the even-order components (if any) by $D_1, \dots, D_m$. We claim that $q_G(U) \ne |U|$. Indeed, if $q_G(U) = |U|$, then 
    $|U|$ and the total number of vertices in the odd-order components have the same parity, contradicting $n$ odd. Thus $q_G(U) > |U|$. Since $U$ is nonempty, $q_G(U) \ge |U| + 1 \ge 2$. Let $S$ be the set consisting of one vertex from each odd-order component in $G-U$. Then $\Lambda^2(S) \subseteq U$, which gives $\frac{|\Lambda^2(S)|}{|S|} \le \frac{|U|}{|S|} = \frac{|U|}{q_G(U)} < 1$, contradicting $\beta^2(G) \ge 1$. Therefore $G$ satisfies the condition in Corollary~\ref{cor:Tutte_hypomatchable}, and $G$ is hypomatchable.
\end{proof}

We now introduce a technical lemma that will be repeatedly used in the subsequent sections. The proof uses the following notion of domination. Let $G$ be any graph, and let $F$ be a spanning subgraph of $G$. For a vertex $v \in V(G)$ and a subset $W \subseteq V(G)$, we say that $v$ {\em $F$-dominates} $W$ if $F$ contains all edges between $v$ and $W$; we say that $v$ {\em nearly $F$-dominates} $W$ if $F$ contains at least $|W|-1$ of these edges.


\begin{lemma}\label{lem:f-dominating}
Let $k\ge 2$ be an integer, and let $G$ be a graph with a spanning subgraph $F$. If $\beta^k(G) \ge 1$ and the maximum degree $\Delta(F) \le k-2$, then $\beta^2(G - E(F)) \ge 1$.
\end{lemma}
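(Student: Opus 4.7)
I will verify $\beta^2(H) \ge 1$ directly, where $H := G - E(F)$: for every $S \subseteq V(H)$ with $|S| \ge 2$ and $\Lambda^2_H(S) \ne V(H)$ I must show $|\Lambda^2_H(S)| \ge |S|$. My workhorse is the containment $\Lambda^k_G(S) \subseteq \Lambda^2_H(S)$, valid for every $S$: if $v \in \Lambda^k_G(S)$, then $|N_G(v) \cap S| \ge k$ and $|N_F(v) \cap S| \le d_F(v) \le k-2$, so at least two of those $G$-edges from $v$ to $S$ survive in $H$. When $|S| \ge k$ this observation alone suffices: the hypothesis $\Lambda^2_H(S) \ne V(G)$ forces $\Lambda^k_G(S) \ne V(G)$, and $\beta^k(G) \ge 1$ then gives $|\Lambda^2_H(S)| \ge |\Lambda^k_G(S)| \ge |S|$.

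For $2 \le |S| < k$, the plan is to combine an $F$-edge count with iterated use of the bound $\beta^k(G) \ge 1$. Any vertex $v \in \Lambda^2_G(S) \setminus \Lambda^2_H(S)$ nearly $F$-dominates $N_G(v) \cap S$ and hence contributes at least one $F$-edge incident to $S$; since the total number of $F$-edges incident to $S$ is at most $|S|(k-2)$, we obtain $|\Lambda^2_G(S) \setminus \Lambda^2_H(S)| \le |S|(k-2)$, and therefore $|\Lambda^2_H(S)| \ge |\Lambda^2_G(S)| - |S|(k-2)$. It thus suffices to prove $|\Lambda^2_G(S)| \ge |S|(k-1)$. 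For $|S| = 2$ I would iterate $\beta^k$ twice: a first application to any $k$-extension of $\{u_1, u_2\}$ yields $|N_G(u_1) \cap N_G(u_2)| \ge k$, and a second application to the extension of $\{u_1, u_2\}$ by $k-2$ of these common neighbors pushes the bound up to $2(k-1)$, matching $|S|(k-1)$ exactly.

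The principal obstacle I anticipate is the range $3 \le |S| \le k-1$: here $\Lambda^2_G(S)$ can be strictly larger than the common neighborhood $\bigcap_{u \in S} N_G(u)$, yet the iterative extension only controls the latter, giving a bound of $2k - |S|$ that falls short of the required $|S|(k-1)$ as soon as $|S| \ge 3$. My approach in this range will be to pass to a minimum counterexample $S$ and squeeze structure out of minimality: for each $u \in S$ the set $S \setminus \{u\}$ is not a counterexample, so the inclusion $\Lambda^2_H(S \setminus \{u\}) \subseteq \Lambda^2_H(S)$ must be an equality (both sides having cardinality $|S|-1$), forcing every $v \in \Lambda^2_H(S)$ to have at least three $H$-neighbors in $S$. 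Iterating this with pair-deletions and combining with the pairwise bound $|N_G(u_i) \cap N_G(u_j)| \ge 2(k-1)$ from the $|S|=2$ case should drive the number of $F$-edges forced at $S$ above the budget $|S|(k-2)$. Executing this $F$-edge count so that the rigidity of the minimum counterexample produces a clean contradiction is the delicate step on which I expect to spend the most effort.
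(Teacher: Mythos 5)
Your reductions for $|S| \ge k$ (via $\Lambda^k_G(S) \subseteq \Lambda^2_H(S)$, exactly the paper's first claim) and for $|S| = 2$ (the two-step extension giving $|\Lambda^2_G(\{u_1,u_2\})| \ge 2(k-1)$, then charging each lost vertex one $F$-edge against the budget $2(k-2)$) are both correct. But the range $3 \le |S| \le k-1$, which you flag as the delicate step, is a genuine gap, and moreover the framework you set up cannot be completed as stated. The intermediate target ``$|\Lambda^2_G(S)| \ge |S|(k-1)$'' is simply false: take $G = K_{2k}$ with $k \ge 4$ (here $\beta^k(G) = 1$) and $|S| = 3$; then $\Lambda^2_G(S) = V(G)$ has only $2k < 3(k-1)$ vertices, so the target exceeds the order of the graph. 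The underlying problem is the flat cost of one $F$-edge per lost vertex: a vertex $v \notin \Lambda^2_H(S)$ with $e_G(v,S) = m$ actually forces $e_F(v,S) \ge m-1$, and any argument in this range must exploit these higher costs, not just the count of lost vertices against the budget $|S|(k-2)$.

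Your fallback sketch (minimum counterexample plus pair-deletions) also falls short when made quantitative. The deduction that every $v \in \Lambda^2_H(S)$ has at least three $H$-neighbors in $S$ is valid, but the pairwise bound $|N_G(u_i) \cap N_G(u_j)| \ge 2(k-1)$ yields, after discounting common neighbors lying in $S$ or in $\Lambda^2_H(S)$ and dividing out the multiplicity with which each $F$-edge is counted over pairs, roughly $\tfrac{|S|}{2}(2k - 2|S| + 1)$ forced $F$-edges --- which exceeds $|S|(k-2)$ only when $|S| \le 2$. Even the stronger count via the full common neighborhood of $S$ (iterated extension gives at least $2k - |S|$ vertices adjacent to all of $S$, each lost one costing $|S|-1$ $F$-edges) gives $(2k - 2|S| + 1)(|S|-1)$ versus budget $|S|(k-2)$, which fails for $k \ge 5$. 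The paper closes this range with a different idea you would need: an induction on subsets $T \subseteq S$ establishing that at most $(k-2) - (|T|-1)(k - |S| + 1)$ vertices $F$-dominate $T$, where the induction step extends $T$ to a $k$-set $U$ chosen to maximize the number of added vertices nearly $F$-dominating $T$, and then double-counts the nearly-$F$-dominating vertices over the $t$-element subsets of $T$; setting $T = S$ makes the bound negative, a contradiction. That subset-induction with the extremal choice of extension is the missing ingredient.
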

\begin{proof}
Let $H = G -E(F)$. Since $\beta^k(G) \ge 1$, it follows from Observation~\ref{obs-1} that $n \ge 2k$.
When $k = 2$, we have $\Delta(F) = 0$, so $F$ has no edges and hence $H = G$. In this case, $\beta^2(H) \ge 1$ holds by assumption. Therefore we may assume throughout the remainder of the proof that $k \ge 3$. 

Assume, to the contrary, that $\beta^2(H) < 1$. Then there exists some set $S \subseteq V(H)$ with $|S|\ge 2$ and $|\Lambda^2_H(S)| < |S|$. We first claim that $|S| < k$. Otherwise, either $|\Lambda^k_G(S)| \ge \beta^k(G)|S|$ or $|\Lambda^k_G(S)| = |V(G)|$, both of which imply that $|\Lambda^k_G(S)| \ge |S|$. For each vertex $v\in \Lambda^k_G(S)$, since $v$ is adjacent in $G$ to at least $k$ vertices of $S$ and $v$ has at most $k-2$ neighbors in $F$, it follows that $v$ is adjacent in $H$ to at least two vertices in $S$. Hence $|\Lambda^2_H(S)| \ge |\Lambda^k_G(S)| \ge |S|$, giving a contradiction.

Next, we show that for all nonempty subsets $T \subseteq S$, there are at most
\[
    (k-2) - (|T|-1)(k-|S|+1)
\]
vertices of $G$ that $F$-dominate $T$. To prove it, we induct on $|T|$. When $|T| =1$, let $T=\{v\}$. Since $d_F(v) \le \Delta(F) \le k-2$, there are at most $k-2$ vertices that $F$-dominate $\{v\}$. So the base case holds.

For some $t \ge 1$, assume that the bound holds for all sets $T \subseteq S$ with $|T|=t$. Let $T \subseteq S$ with $|T| = t + 1$ be arbitrary. Since $|S| < k$, we have $|T| < k$. 

Let $U$ be a set of $k$ vertices containing $T$ and $k-t-1$ additional vertices, chosen so that as many of them as possible nearly $F$-dominate $T$. Then $|\Lambda^k_G(U)| \ge |U| = k$. Every vertex $x \in \Lambda^k_G(U)$ is adjacent in $G$ to all of $U$, meaning that $x$ itself is not in $U$ and that $x$ is adjacent in $G$ to all of $T$.
This would place $x$ in $\Lambda^2_H(T) \subseteq \Lambda^2_H(S)$, unless at least $|T|-1 = t$ of the edges from $x$ to $T$ are in $F$; that is, unless $x$ nearly $F$-dominates $T$.

Since $|\Lambda^2_H(S)| < |S|$, at least $k-|S|+1$ of the vertices in $\Lambda^k_G(U)$ must nearly $F$-dominate $T$. By our choice of $U$, all $k-t-1$ vertices of $U \setminus T$ must also nearly $F$-dominate $T$, for a total of $2k - |S| - t$ such vertices.

Each vertex that nearly $F$-dominates $T$ falls under one of two cases: it either
\begin{enumerate}[label=(\arabic*)]
    \item $F$-dominates exactly one $t$-element subset of $T$, or
    \item $F$-dominates all $t$-element subsets of $T$, as well as $T$ itself.
\end{enumerate}
By assumption, at most $(k-2) - (t-1)(k-|S|+1)$ vertices $F$-dominate each $t$-element subset of $T$. If we sum over all $t+1$ of these subsets, then every vertex in case (1) is counted once, and every vertex in case (2) is counted $t+1$ times. Let $d$ denote the number of vertices of $G$ that $F$-dominate $T$. Then at most
\[
    (t+1)\Big((k-2) - (t-1)(k-|S|+1)\Big) - td 
\]
vertices nearly $F$-dominate $T$. We obtain the inequality
\[
    (t+1)\Big((k-2) - (t-1)(k-|S|+1)\Big) - td \ge 2k - |S| - t.
\]
Solving for an upper bound on $d$, we get
\begin{align*}
d   &\le \frac{(t+1)(k-2) - (t^2-1)(k-|S|+1) - 2k + |S| + t}{t} \\
    &= \frac{t(k-2) - t^2(k-|S|+1)}{t} + \frac{(k-2) + (k-|S|+1) - 2k + |S| + t}{t} \\
    &= (k-2) - t(k-|S|+1) + \frac{t-1}{t}.
\end{align*}
Since both $d$ and $(k-2) - t(k-|S|+1)$ are integers, while $\frac{t-1}{t} < 1$, it follows that 
\[
    d \le (k-2) - t(k-|S|+1), 
\]completing the induction step.

Setting $T = S$, the upper bound simplifies to $(k-2) - (|S|-1)(k-|S|+1) = (|S|-k)(|S|-2)-1$, which is negative when $2 \le |S| < k$. Since the number of vertices in $G$ that $F$-dominate $S$ cannot be negative, this yields a contradiction. Therefore no such set $S$ can exist, and we conclude that $\beta^2(H) \ge 1$.
\end{proof}

We finish this section with a short proof of Theorem~\ref{thm:1-factors-nonbip}.

\begin{proof}[Proof of Theorem~\ref{thm:1-factors-nonbip}]
We proceed by induction on $k$. When $k = 1$, there is nothing to prove, and when $k=2$, the theorem follows by Lemma~\ref{lem:1-factor-exist} if $n$ is even, and by Lemma~\ref{lem:hypomatchable} if $n$ is odd.

Now assume for some $k > 2$ that the theorem holds for $k-1$. Let $G$ be a graph of order $n$ with $\beta^k(G) \ge 1$. Then by Observation~\ref{obs:k_to_k-1}, we have $\beta^{k-1}(G) \ge 1$. By the inductive hypothesis, $G$ contains $k-2$ pairwise disjoint perfect matchings (if $n$ is even) or near-perfect matchings (if $n$ is odd).

Let $M$ be the union of these matchings, and let $H = G-E(M)$. Clearly, $\Delta(M) \le k - 2$ and $|V(H)| = |V(G)| = n$. By Lemma~\ref{lem:f-dominating} with $F=M$, we have $\beta^2(H) \ge 1$. Then, applying Lemma~\ref{lem:1-factor-exist} or~\ref{lem:hypomatchable} to $H$, depending on the parity of $n$, we obtain a perfect or near-perfect matching in $H$ that is disjoint from the $k-2$ matchings we found in $G$. In total, $G$ contains $k-1$ pairwise disjoint perfect or near-perfect matchings, completing the induction step. We conclude that the theorem holds for all positive integers $k$.
\end{proof}

\subsection{The Bipartite Case}\label{subsec:1-factors-bip} 
In this section, we begin with two classical results on matchings in bipartite graphs. Let $G$ be a bipartite graph with bipartition $(X, Y)$.

The first is Hall's Theorem, which characterizes when $G$ contains a matching that covers all of $X$.

\begin{theorem}[Theorem 2.1.2 in~\cite{diestel}]\label{thm:Hall's}
    A bipartite graph $G$ with bipartition $(X, Y)$ contains a matching covering all vertices of $X$ if and only if $|\Lambda(S)| \ge |S|$ for all subsets $S \subseteq X$.
\end{theorem}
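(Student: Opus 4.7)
The statement is the classical Hall's Marriage Theorem, so I would not try to invent a new proof but rather sketch the standard inductive argument, since the reverse direction (sufficiency) is what does the real work.

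The plan is to prove both directions. The forward direction (necessity) is essentially a one-liner: if $M$ is a matching covering $X$ and $S\subseteq X$, then the $M$-partners of vertices in $S$ form an injection $S\to \Lambda(S)$, so $|\Lambda(S)|\ge|S|$. Everything interesting is in the reverse direction, which I would prove by strong induction on $|X|$.

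For the inductive step, assume Hall's condition $|\Lambda(S)|\ge|S|$ holds for every $S\subseteq X$, and split into two cases according to whether the condition is ever tight on a proper nonempty subset. In the \emph{easy case}, every nonempty proper $S\subsetneq X$ satisfies $|\Lambda(S)|\ge|S|+1$. Then I pick any $x\in X$ and any neighbor $y\in\Lambda(x)$, match them, and delete both. For any $S'\subseteq X\setminus\{x\}$, removing $y$ from $Y$ drops $|\Lambda(S')|$ by at most one, so the slack inequality $|\Lambda_G(S')|\ge|S'|+1$ guarantees $|\Lambda_{G-\{x,y\}}(S')|\ge|S'|$; induction finishes this case. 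In the \emph{tight case}, there is a nonempty proper $S\subsetneq X$ with $|\Lambda(S)|=|S|$. I split the graph along $S$: let $G_1$ be the subgraph induced by $S\cup\Lambda(S)$, and let $G_2$ be the subgraph induced by $(X\setminus S)\cup(Y\setminus\Lambda(S))$. Hall's condition transfers to $G_1$ for free. For $G_2$, given $T\subseteq X\setminus S$ I use
\[
|\Lambda_{G_2}(T)|=|\Lambda_G(T)\setminus\Lambda(S)|\ge|\Lambda_G(T\cup S)|-|\Lambda(S)|\ge|T\cup S|-|S|=|T|,
\]
which is exactly Hall's condition on $G_2$. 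Induction produces matchings covering $S$ and $X\setminus S$; their union covers $X$.

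The main obstacle is the standard one: setting up the bookkeeping in the tight case so that Hall's condition is verified on both halves after splitting. If I wanted a conceptually cleaner route instead, I could derive the statement from König's theorem (equality of maximum matching and minimum vertex cover in bipartite graphs) or from the max-flow min-cut theorem applied to the usual $s$-$X$-$Y$-$t$ flow network, but the inductive argument above is the most self-contained and is what I would present.
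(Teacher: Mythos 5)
The paper does not prove this statement at all: it is quoted as a known classical result (Hall's marriage theorem, cited as Theorem~2.1.2 of Diestel) and used as a black box in the proof of Theorem~\ref{thm:1-factor-bip}. Your sketch is the standard Halmos--Vaughan induction and is correct as written -- the necessity argument, the slack/tight case split, and the verification of Hall's condition on both halves in the tight case (via $|\Lambda_G(T)\setminus\Lambda(S)| = |\Lambda_G(T\cup S)|-|\Lambda(S)|\ge |T|$) are all sound -- so nothing further is needed.
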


The second is a generalization due to Lebensold~\cite{LEBENSOLD1977}, which provides a necessary and sufficient condition for the existence of $k$ disjoint matchings, each covering all of $X$. For a subset $S \subseteq X$, define
\[
    L^k(S) = \sum_{y \in Y} \min\{k, |\Lambda(y) \cap S|\}.
\]

\begin{theorem}[\cite{LEBENSOLD1977}]\label{thm:Lebensold}
    Let $G$ be a bipartite graph with bipartition $(X, Y)$. Then $G$ contains $k$ disjoint matchings covering $X$ if and only if $L^k(S) \ge k|S|$ for all $S \subseteq X$.
\end{theorem}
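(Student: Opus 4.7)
My plan is to prove the two directions separately. The forward (necessity) direction reduces to a direct double-counting argument, while the reverse (sufficiency) direction proceeds by reducing to a degree-constrained subgraph problem solvable via max-flow min-cut, after which König's edge-coloring theorem for bipartite graphs converts the subgraph into the desired $k$ matchings.

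For necessity, I would suppose $M_1, \ldots, M_k$ are pairwise disjoint matchings each covering $X$, and fix $S \subseteq X$. Each $x \in S$ is incident to exactly one edge of each $M_i$, contributing a total of $k|S|$ edges to $M := M_1 \cup \cdots \cup M_k$ with one endpoint in $S$. Grouping these edges by their $Y$-endpoint, each $y \in Y$ contributes at most $k$ (one per $M_i$) and at most $|\Lambda(y) \cap S|$ (only neighbors of $y$ in $S$ can participate), hence at most $\min\{k,|\Lambda(y)\cap S|\}$. Summing over $y \in Y$ yields $k|S| \le L^k(S)$.

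For sufficiency, it suffices to construct a spanning subgraph $H \subseteq G$ with $d_H(x) = k$ for every $x \in X$ and $d_H(y) \le k$ for every $y \in Y$. Given such an $H$, one has $\Delta(H) = k$, and by König's edge-coloring theorem $H$ admits a proper edge $k$-coloring; each color class is a matching, and each must cover $X$ since every $x \in X$ has degree exactly $k$ in $H$. To produce $H$, I would build a flow network $D$ with source $s$, sink $t$, arcs $s \to x$ of capacity $k$ for each $x \in X$, arcs $x \to y$ of capacity $1$ for each edge $xy \in E(G)$, and arcs $y \to t$ of capacity $k$ for each $y \in Y$. Integrality of max flow ensures that a feasible $H$ exists if and only if the maximum $s$-$t$ flow in $D$ equals $k|X|$, and by max-flow min-cut this holds if and only if every $s$-$t$ cut has capacity at least $k|X|$.

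The key computational step is to identify the minimum cut in terms of $L^k$. Parametrizing the source side of an $s$-$t$ cut by $S \subseteq X$ and $T \subseteq Y$, the cut capacity equals $k(|X|-|S|) + \sum_{y \in Y \setminus T} |\Lambda(y) \cap S| + k|T|$. For fixed $S$, placing $y$ in $T$ exactly when $|\Lambda(y) \cap S| > k$ minimizes the $Y$-side contribution, and the resulting minimum value is $\sum_{y \in Y} \min\{k,|\Lambda(y)\cap S|\} = L^k(S)$. Hence the minimum cut capacity equals $k|X| - k|S| + L^k(S)$, and the hypothesis $L^k(S) \ge k|S|$ for every $S \subseteq X$ is precisely equivalent to every $s$-$t$ cut having capacity at least $k|X|$. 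The main obstacle, such as it is, is recognizing $L^k(S)$ as the minimized $Y$-side contribution; the remaining ingredients (integrality of max flow, max-flow min-cut, and König's edge-coloring theorem) are standard, so this identification is what explains why $L^k$ is the right functional in the characterization.
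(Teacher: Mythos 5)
Your proof is correct in both directions, but there is nothing in the paper to compare it against: the paper states this theorem as a quoted result of Lebensold~\cite{LEBENSOLD1977} and gives no proof of it, so you have in effect supplied a self-contained proof of a black-box ingredient. Your necessity argument is the standard double count and is airtight: the $k$ edge-disjoint matchings contribute exactly $k|S|$ distinct edges with an endpoint in $S$, and each $y \in Y$ absorbs at most $\min\{k, |\Lambda(y)\cap S|\}$ of them (at most one per matching, and only along edges to $\Lambda(y)\cap S$). Your sufficiency argument is also sound: the cut parametrized by $(S,T)$ has capacity $k(|X|-|S|) + \sum_{y \in Y\setminus T}|\Lambda(y)\cap S| + k|T|$, and optimizing $T$ pointwise (put $y$ in $T$ iff $|\Lambda(y)\cap S| \ge k$; ties are indifferent) correctly identifies the minimum over $T$ as $k|X| - k|S| + L^k(S)$, so the hypothesis is exactly the max-flow condition. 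The integral flow of value $k|X|$ saturates every arc $s \to x$, giving the subgraph $H$ with $d_H(x)=k$ on $X$ and $d_H(y)\le k$ on $Y$; K\H{o}nig's edge-coloring theorem gives $\chi'(H) = \Delta(H) \le k$ (with equality when $X \ne \emptyset$, and the case $X = \emptyset$ is trivial; if $\chi'(H) < k$ one pads with empty matchings), and since each $x \in X$ has exactly $k$ incident edges in $H$, every color class covers $X$. This flow-plus-edge-coloring route is essentially the modern textbook derivation via degree-constrained subgraphs (equivalently the Ore--Ryser-type characterization), whereas Lebensold's original argument was a Hall-type combinatorial induction; your version has the advantage of making transparent, as you note, why $L^k(S)$ is the right functional — it is the minimized $Y$-side cut contribution — at the mild cost of invoking max-flow integrality and K\H{o}nig as external tools.
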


\begin{proof}[Proof of Theorem~\ref{thm:1-factor-bip}]
    As $\beta^k(G, X) \ge 1$, it follows from the definition that $|X| \ge k$.
    When $k = 1$, $G$ clearly satisfies $|\Lambda(S)| \ge |S|$ for any set $S \subseteq X$, and Theorem~\ref{thm:Hall's} guarantees the existence of a matching covering $X$ in $G$. So we may assume $k \ge 2$. To apply Theorem~\ref{thm:Lebensold}, it suffices to verify that $L^k(S) \ge k|S|$ for all $S \subseteq X$. Let $S$ be any subset of $X$. 
    
    
    Suppose that $|S| \ge k$. Since $|\Lambda^k(S)| \ge \beta^k(G, X)|S| \ge |S|$, there are at least $|S|$ vertices in $Y$ that are adjacent to at least $k$ vertices in $S$, which gives $L^k(S) \ge k|S|$. 
    
    Now suppose that $|S| \le k-1$. We claim that each vertex in $X$ has a degree of at least $k$. Indeed, for any vertex $x \in X$, there is a $k$-set $S_x \subseteq X$ such that $x \in S_x$. Since $|S_x| = k$, then every vertex in $\Lambda^k(S_x)$ must be a neighbor of $x$. Hence $|\Lambda^k(S_x)| \ge \beta^k(G, X)|S_x| \ge k$ implies $d(x) \ge k$. Observe that if $|S| \le k-1$, then for every $y \in Y$, we have $|\Lambda(y) \cap S| \le |S| < k$. It follows that $L^k(S) = \sum_{y \in Y}|\Lambda(y) \cap S|$ counts precisely the number of edges between $S$ and $Y$. By double counting, we obtain $L^k(S) = \sum_{x \in S} d(x) \ge k|S|$, as required. 
    
    Therefore, in either case, $L^k(S) \ge k|S|$ holds, and by Theorem~\ref{thm:Lebensold}, $G$ contains $k$ disjoint matchings, each covering all of $X$.
\end{proof}

\section{The First Proof of Theorem~\ref{thm:k-factor}}\label{sec:first-proof}

We begin by recalling the result from Bar\'at \textit{et al.}~\cite{Hungarian}, as mentioned in Section~\ref{sec:introduction}. 

\begin{theorem}[Theorem 3.4 in~\cite{Hungarian}]\label{thm:2-factor}
    Let $G$ be a graph on at least two vertices. If  $\beta^2(G) \ge 1$,  then $G$ contains a $2$-factor.
\end{theorem}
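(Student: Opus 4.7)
The plan is to apply Tutte's $f$-factor theorem in the case $f \equiv 2$, which asserts that $G$ has a $2$-factor if and only if, for every pair of disjoint sets $D, T \subseteq V(G)$,
$$
\theta(D,T) := 2|D| - 2|T| + \sum_{v \in T} d_{G-D}(v) \;\ge\; h(D,T),
$$
where $h(D,T)$ denotes the number of components $C$ of $G - D - T$ for which $|E_G(V(C), T)|$ is odd. Throughout I would use the preliminary consequences of $\beta^2(G) \ge 1$ already established in the paper: $G$ is connected (Corollary~\ref{cor:connected}), $|V(G)| \ge 4$ (Observation~\ref{obs-1}), and $\delta(G) \ge 3$ (Lemma~\ref{lem:minimum-degree-with-k}(1)).

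Assume, for contradiction, that $G$ has no $2$-factor, and choose an obstructing pair $(D, T)$ with $\theta(D, T) < h(D, T)$ that minimizes $|D| + |T|$. The standard extremal analysis of $f$-factor obstructions---considering the effect of moving a vertex between $T$ and $D$, deleting a vertex of $D$, or deleting a vertex from a component of $G - D - T$ of size at least two---forces every component counted by $h(D, T)$ to be a single vertex, and imposes strong constraints on the neighborhoods of vertices in $T$ and of these singletons.

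I would then let $S$ denote the set of singleton components of $G - D - T$ that contribute to $h(D,T)$; by construction every $v \in S$ satisfies $N_G(v) \subseteq D \cup T$, so the candidate set $X := S$ (or, if necessary, $X := S \cup T'$ for a carefully chosen $T' \subseteq T$) has $\Lambda^2_G(X) \subseteq D \cup T$. The deficit $h(D,T) - \theta(D,T) \ge 1$, combined with $\delta(G) \ge 3$ applied to each $v \in S$ (which forces many incidences into $D \cup T$ per singleton), would be used to deduce $|X| > |D| + |T| \ge |\Lambda^2_G(X)|$. After ruling out $\Lambda^2_G(X) = V(G)$ (easy since members of $S$ share no common neighbor in $S$), this contradicts $\beta^2(G) \ge 1$.

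The main obstacle I anticipate is the parity condition defining $h(D,T)$: unlike the odd-component condition in Tutte's $1$-factor theorem, the ``badness'' of a component depends on the parity of its edge-count to $T$, so $T$ cannot simply be treated as a deleted set. A likely simplification is to first show that the extremal pair $(D,T)$ can be taken with $T = \emptyset$ (by repeatedly moving a vertex of $T$ into $D$ while tracking the effect on $\theta - h$ via $\delta(G) \ge 3$), after which the analysis reduces to a direct second-neighborhood argument in the spirit of the proof of Lemma~\ref{lem:1-factor-exist} above.
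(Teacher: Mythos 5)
Your proposal follows the same general route as the paper's alternate proof of Theorem~\ref{thm:k-factor} in Section~\ref{sec:k-factor} (the statement itself is quoted from Bar\'at \emph{et al.}, but the $k=2$ instance of the Section~\ref{sec:k-factor} argument proves it), namely Tutte's factor theorem plus an extremal obstructing partition plus applications of $\beta^2(G)\ge 1$ to sets built from $T$ and component representatives. However, two of your structural claims do not hold. First, the fallback plan of reducing to an obstructing pair with $T=\emptyset$ is provably impossible, not merely delicate: for $f\equiv 2$, if $T=\emptyset$ then every component $C$ of $G-D$ has $e(C,T)=0$, which is even, so $h(D,\emptyset)=0$ while $\theta(D,\emptyset)=2|D|\ge 0$. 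Hence \emph{no} obstructing pair with $T=\emptyset$ exists for any graph; the parity role of $T$ that you flag as an obstacle is the entire content of the $2$-factor obstruction, and any proof must keep $T$ in play. (For a concrete instance, the friendship graph of three triangles sharing a vertex $c$ has no $2$-factor, and its smallest obstructing pairs, e.g.\ $D=\{c\}$ with $T$ consisting of two outer vertices from distinct triangles, necessarily have $T\neq\emptyset$.)

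Second, your extremal criterion points the wrong way for the claims you need. Minimizing $|D|+|T|$ only yields information from moves that \emph{shrink} $D\cup T$, i.e.\ constraints on vertices of $D$ and $T$; transferring a vertex $w\in U$ into $D$ or $T$ increases the objective, so minimality says nothing about it. Consequently neither the claim that every component counted by $h(D,T)$ is a singleton (not a standard fact for $f\equiv 2$; even for $1$-factors, extremal barriers give factor-critical, not singleton, odd components) nor the bound $e(w,T)\le 1$ for $w\in U$ — which you would need to place $\Lambda^2$ of your candidate sets inside $D$ — is available from your setup. The paper resolves exactly this by using a maxmin barrier (maximize $|S|$ \emph{first}, then minimize $|T|$): moving $w\in U$ into $S$ must then destroy the barrier, which is what yields Lemma~\ref{lem:maxmin-barrier}$(i)$, with parts $(ii)$--$(iv)$ controlling $T$. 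Even with these tools the paper never shows odd components are singletons; instead it bounds component sizes using the binding condition itself: once $|T|\le k-1$ is forced (otherwise $\Lambda^k(T)\subseteq S$ with $|S|<|T|$), any component $C$ of $G[U]$ gives $Q=T\cup C$ with $\Lambda^k(Q)\subseteq V(C)\cup S$ and ratio below $1$; for $k=2$ this forces $|T|=1$, $|S|=0$ and an immediate contradiction. If you replace your criterion by the maxmin one and your singleton claim by this $T\cup C$ device, your outline essentially becomes the paper's Section~\ref{sec:k-factor} proof.
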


\begin{proof}[Proof of Theorem~\ref{thm:k-factor}]
The case $k=2$ follows from Theorem~\ref{thm:2-factor}.

Suppose $k \ge 3$, and that Theorem~\ref{thm:k-factor} holds for any integer $i$ with $2\le i \le k-1$.  Let $G$ be a graph on $n$ vertices such that $nk$ is even and $\beta^k(G) \ge 1$. 

If $n$ is even, then by Theorem~\ref{thm:1-factors-nonbip}, $G$ contains $k-1$ pairwise disjoint perfect matchings. Let $F$ be the subgraph spanned by the union of $k-2$ of these matchings. Then $F$ is a $(k-2)$-factor of $G$.
If $n$ is odd, then $k$ is even as $nk$ is even, and so $k \ge 4$ and $n(k-2)$ is even. By Observation~\ref{obs:k_to_k-1},   $\beta^{k-2}(G) \ge 1$. 
By induction, $G$ contains a $(k-2)$-factor, also denoted by $F$. Thus, in either case, $G$ contains a $(k-2)$-factor $F$. 

By Lemma~\ref{lem:f-dominating}, $\beta^2(G-E(F)) \ge 1$, so Theorem~\ref{thm:2-factor} implies that $G-E(F)$ contains a $2$-factor $M$. Then $M\cup F$ is a $k$-factor of $G$. 

It remains to describe a family of graphs that demonstrates that the bound $1$ in the condition $\beta^k(G) \ge 1$ is tight. For any integer $k \ge 2$, let $n \ge 2k + 2$ be even, and consider the split graph
\[
G := \overline{K}_{\frac{n}2 +1} + K_{\frac{n}{2}-1}. 
\]
See Figure~\ref{fig:split_K3_5} for an explicit example.
\begin{figure}[H]
    \centering
    \begin{tikzpicture}[scale=0.7, every node/.style={fill=black, circle, inner sep=1.5pt}]
    
    \node[label=above:$v_1$] (v1) at (-2, 1) {};
    \node[label=below:$v_2$] (v2) at (-2, -1) {};
    \node[label=left:$v_3$] (v3) at (-3, 0) {};

    \node[label=right:$u_1$] (u1) at (1, 2) {};
    \node[label=right:$u_2$] (u2) at (1, 1) {};
    \node[label=right:$u_3$] (u3) at (1, 0) {};
    \node[label=right:$u_4$] (u4) at (1, -1) {};
    \node[label=right:$u_5$] (u5) at (1, -2) {};

    \foreach \a/\b in {v1/v2, v1/v3, v2/v3} {
        \draw (\a)--(\b);
    }

    \foreach \u in {u1, u2, u3, u4, u5} {
        \foreach \v in {v1, v2, v3} {
            \draw (\u)--(\v);
        }
    }
    \end{tikzpicture}
    \caption{Example for $n = 8$.}
    \label{fig:split_K3_5}
\end{figure}
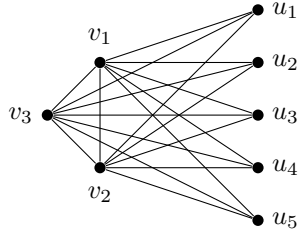

Such a graph $G$ does not contain a $k$-factor. Otherwise, $k (\frac n2 +1) = |E(\overline{K}_{\frac n2 +1}, K_{\frac n2 -1})| \le k (\frac{n}2 -1)$, a contradiction.

We claim that $\beta^k(G)$ tends to $1$ from below as $n$ becomes sufficiently large relative to $k$, while all other conditions of the theorem remain satisfied, showing that the bound on $\beta^k(G)$ cannot be replaced with any value smaller than 1 in the theorem.
Clearly, the graph $G$ has order $n \ge 2k+2$, and $nk$ is even since $n$ is even. Let $S$ be any subset of $V(G)$ such that $|S| \ge k$ and $\Lambda^k(S) \ne V(G)$. Define $s_I = |S \cap V(\overline{K}_{\frac n2 +1})|$ and $s_C = |S \cap V(K_{\frac n2 -1})|$, so $|S|=s_I + s_C$. Suppose that $|S| \ge k+1$. Each vertex in $K_{\frac{n}{2}-1}$ is adjacent to all other vertices in $G$, so $V(K_{\frac{n}{2}-1}) \subseteq \Lambda^k(S)$. Since $\Lambda^k(S) \ne V(G)$, we must have $s_C \le k-1$, and thus $\Lambda^k(S) = V(K_{\frac{n}{2}-1})$. To minimize the ratio $\frac{|\Lambda^k(S)|}{|S|}$, we maximize $S$, which is achieved when $s_C = k-1$ and $s_I = \frac{n}{2}+1$. This gives us 
\[
\frac{|\Lambda^k(S)|}{|S|} = \frac{n-2}{n+2k} < 1.
\]
Now suppose that $|S| = k$. In this case, $\Lambda^k(S) \cap S = \emptyset$. If $s_I \ge 1$, then $s_C \le k-1$. This implies that $\Lambda^k(S) = V(K_{\frac{n}{2}-1}) \setminus S$, and thus
\[
\frac{|\Lambda^k(S)|}{|S|} = \frac{\frac{n}{2} - 1 -s_C}{k} \ge \frac{n-2k}{2k},
\]
with equality when $s_I = 1$ and $s_C = k-1$. If $s_I = 0$, then $s_C = k$ and $\Lambda^k(S) = V(G) \setminus S$, yielding
\[
\frac{|\Lambda^k(S)|}{|S|} = \frac{n-k}{k} > 1.
\]
By definition, $\beta^k(G) < 1$. Moreover, when $n \gg k$, we have $\beta^k(G) = \frac{n-2}{n+2k}$, which is strictly less than $1$ but approaches $1$.
\end{proof}

\section{Further Preliminaries on Tutte's \texorpdfstring{$k$}{k}-Factor Theorem}\label{sec:Pre}
Let $G$ be a graph. Given any subsets $S$ and $T$ of $V(G)$, we write $e_G(S, T)$ for the number of edges with one endpoint in $S$ and the other in $T$, counting each edge exactly once even if $S \cap T \ne \emptyset$. If $S = T$, then $e_G(T):= e_G(T, T)$ denotes the number of edges with both ends in $T$; and if $S = \{s\}$, we write $e_G(s, T)$ for $e_G(\{s\}, T)$.
Let $k$ be a positive integer, and let $(S, T, U)$ be an ordered triple such that $S, T, U$ form a partition of $V(G)$. We refer to such a triple as an \emph{ordered partition} of $V(G)$. A component $C$ of $G[U]$ is said to be {\it odd} (resp. {\it even}) with respect to $(S, T, U)$ and $k$ if $k|C| + e(C, T)$ is odd (resp. even). We write $q(S, T, U)$ for the number of odd components of $G[U]$ (with respect to $(S, T, U)$ and $k$). Our method is based on Tutte's $k$-factor theorem stated below.

\begin{theorem}[Theorem C in~\cite{Tutte1954ASP}]\label{thm:Tutte}
Let $G$ be a graph and $k$ be an integer satisfying $0 \le k \le \delta(G)$. The graph $G$ contains a $k$-factor if and only if, for any ordered partition $(S, T, U)$ of $V(G)$, the value
\begin{align*}
\delta_G(S, T,U) := k|S| - k|T| +\sum_{v \in T}d_{G-S}(v) - q(S, T, U)
\end{align*}
is nonnegative.
\end{theorem}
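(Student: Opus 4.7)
The plan is to prove both directions via the classical route: necessity by an edge-counting argument inside a hypothetical $k$-factor, and sufficiency by a gadget reduction to Tutte's $1$-factor theorem (Theorem~\ref{thm:Tutte-1}), which is already quoted in the paper.

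For necessity, I would suppose $G$ admits a $k$-factor $F$ and fix an ordered partition $(S,T,U)$. Writing $k|S| = \sum_{v \in S} d_F(v)$ and $k|T| = \sum_{v \in T} d_F(v)$, splitting each degree into contributions from edges to $S$, $T$, and $U$, and using $\sum_{v \in T} d_{G-S}(v) = 2 e_G(T) + e_G(T,U)$, a short rearrangement gives
\[
\delta_G(S,T,U) = 2e_F(S) + e_F(S,U) + 2\bigl(e_G(T)-e_F(T)\bigr) + \bigl(e_G(T,U)-e_F(T,U)\bigr) - q(S,T,U).
\]
Since $F \subseteq G$, every term before $-q(S,T,U)$ is non-negative, so it suffices to show $e_F(S,U) + e_G(T,U) - e_F(T,U) \ge q(S,T,U)$. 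For each component $C$ of $G[U]$, the identity $\sum_{v \in C} d_F(v) = k|C|$ yields $e_F(C,S) + e_F(C,T) \equiv k|C| \pmod{2}$, and thus
\[
e_F(C,S) + \bigl(e_G(C,T) - e_F(C,T)\bigr) \equiv k|C| + e_G(C,T) \pmod 2.
\]
When $C$ is odd this sum is an odd non-negative integer, hence at least $1$, and summing over odd components yields the required bound; even components only contribute extra non-negative slack.

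For sufficiency, I would build an auxiliary graph $H$ by replacing each $v \in V(G)$ of degree $d_v$ by a complete bipartite gadget $K_v$ with parts $A_v = \{a_v^e : e \ni v\}$ of size $d_v$ and $B_v$ of size $d_v - k$ (non-negative since $k \le \delta(G)$), and by replacing each edge $e = uv$ of $G$ by the single edge $a_u^e a_v^e$. A short counting check shows that perfect matchings of $H$ biject with $k$-factors of $G$: within each gadget, exactly $k$ vertices of $A_v$ must be matched externally (the remaining $d_v - k$ being matched into $B_v$), and those external edges reconstitute the desired $k$-regular spanning subgraph. Applying Tutte's $1$-factor theorem to $H$ thus gives the equivalent criterion that $o(H-X) \le |X|$ for every $X \subseteq V(H)$.

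The delicate step, and the main obstacle I expect, is translating this $1$-factor condition back into the claimed inequality on ordered partitions of $V(G)$. Given any violating $X$, I would first canonicalize $X$ gadget-by-gadget, showing that the deficiency $o(H-X) - |X|$ cannot be decreased by restricting each $X \cap (A_v \cup B_v)$ to one of three configurations: empty, all of $A_v$ together with all of $B_v$, or all of $A_v$ with $B_v$ removed from $X$. Define $S$, $T$, $U$ by recording, respectively, which vertices fall into each of these three types. A careful matching of odd components of $H - X$ to components $C$ of $G[U]$ — and verifying that the parity $k|C|+e_G(C,T) \pmod 2$ is precisely what determines which $H$-components are odd — then shows $o(H-X) - |X| = -\delta_G(S,T,U)$, giving the desired equivalence. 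The combinatorial bookkeeping for the canonicalization, ensuring no mixed configuration yields a strictly larger deficiency, is where the argument takes genuine care.
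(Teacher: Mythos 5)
First, a point of reference: the paper does not prove this statement at all --- it is quoted verbatim as Theorem~C of Tutte's 1952/54 paper and used as a black box in Sections~\ref{sec:k-factor} and~\ref{sec:split}. So there is no in-paper proof to compare against; your proposal can only be judged on its own terms. Your necessity direction is correct and essentially complete: the rearrangement of $\delta_G(S,T,U)$ into manifestly nonnegative terms minus $q(S,T,U)$, followed by the per-component parity argument $e_F(C,S)+\bigl(e_G(C,T)-e_F(C,T)\bigr)\equiv k|C|+e_G(C,T)\pmod 2$, is the standard argument and is sound. The gadget construction for sufficiency is also the right one, and the bijection between perfect matchings of $H$ and $k$-factors of $G$ is correctly justified.

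The genuine gap is in the translation step, and it is not only that the bookkeeping is deferred: the canonicalization you propose appears to be the wrong one. You restrict $X\cap(A_v\cup B_v)$ to $\emptyset$, $A_v\cup B_v$, or $A_v$, but none of these can represent a vertex of $T$. The terms $\sum_{v\in T}d_{G-S}(v)$ and the parity $k|C|+e_G(C,T)$ defining odd components arise precisely because, for $v\in T$, the excess vertices $B_v$ are deleted while the edge-stubs $A_v$ survive: then each stub $a_v^e$ with $e$ running into a component $C$ of $G[U]$ attaches to that component and shifts its order by $e_G(C,T)$, and the stubs with $e$ into $S$ or back into $T$ account for the degree sum. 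With your three configurations, every surviving gadget keeps all of $A_v$ out of $X$ or puts all of $A_v$ into $X$, so the $U$-side components of $H-X$ have order $\sum_{v\in C}(2d_v-k)$ and parity $k|C|$ only --- the $e_G(C,T)$ term never appears, and one recovers a strictly weaker inequality than $\delta_G(S,T,U)\ge 0$. The needed fourth configuration is $X\cap(A_v\cup B_v)=B_v$. A second unaddressed wrinkle: when $d_v=k$ the part $B_v$ is empty and the gadget $A_v$ is an independent set, so the $U$-side gadgets glued by surviving external edges need not form one component per component of $G[U]$; the correspondence $o(H-X)-|X|=-\delta_G(S,T,U)$ then fails literally and must be replaced by an inequality argument (typically by moving such a $v$ into $T$). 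Since the entire sufficiency direction rests on this translation, the proposal as written does not yet constitute a proof of that half.
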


We call an ordered partition $(S,T,U)$ a \emph{barrier} if $\delta_G(S,T,U) < 0$. In particular, we call $(S,T,U)$ a \emph{maxmin barrier} if, of all barriers, it
\begin{enumerate}[label=(\arabic*)]
    \item maximizes $|S|$; and
    \item subject to (1), minimizes $|T|$.
\end{enumerate}

The notion of a maxmin barrier, along with several of its properties, was previously investigated in the context of Tutte's 2-factor theorem~\cite{ GRIMM2023, kannoshan2019}. Here, we extend the study to general $k \ge 2$, including the $k = 2$ case and refining some aspects of the earlier results.

Let $k \ge 2$ be an integer, and let $G$ be a graph on $n$ vertices such that $nk$ is even and $G$ has no $k$-factor. 
\begin{lemma}\label{lem:barrier-range}
For all ordered partitions $(S, T, U)$, $\delta_G(S, T, U) \equiv 0 \pmod 2$. In particular, if $(S, T, U)$ is a barrier, then $\delta_G(S, T, U) \le -2$.
\end{lemma}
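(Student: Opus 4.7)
The plan is to evaluate $\delta_G(S,T,U)$ modulo $2$ and show that every term lines up so that the total is $kn \pmod 2$, which vanishes by hypothesis; the ``in particular'' clause is then immediate since a negative even integer is at most $-2$.

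First I would rewrite the degree sum in $\delta_G$ as an edge count. Every edge counted by $\sum_{v\in T} d_{G-S}(v)$ has at least one endpoint in $T$ and the other in $T\cup U$, so
\[
\sum_{v\in T} d_{G-S}(v) \;=\; 2\,e_G(T) + e_G(T,U) \;\equiv\; \sum_{C} e_G(C,T) \pmod 2,
\]
where the sum on the right runs over the components $C$ of $G[U]$. Substituting into the definition gives
\[
\delta_G(S,T,U) \;\equiv\; k|S| + k|T| + \sum_{C} e_G(C,T) + q(S,T,U) \pmod 2.
\]

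Next I would compute $q(S,T,U)\pmod 2$ using the component parity condition. By definition, a component $C$ contributes $1$ to $q(S,T,U)$ exactly when $k|C| + e_G(C,T)$ is odd, so
\[
q(S,T,U) \;\equiv\; \sum_{C}\bigl(k|C| + e_G(C,T)\bigr) \;=\; k|U| + \sum_{C} e_G(C,T) \pmod 2.
\]
Plugging this back in, the two copies of $\sum_C e_G(C,T)$ cancel modulo $2$, leaving
\[
\delta_G(S,T,U) \;\equiv\; k|S| + k|T| + k|U| \;=\; kn \;\equiv\; 0 \pmod 2,
\]
where the last congruence uses the standing hypothesis that $kn$ is even.

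Therefore $\delta_G(S,T,U)$ is an even integer for every ordered partition, and in particular for any barrier, where $\delta_G(S,T,U) < 0$, we must have $\delta_G(S,T,U) \le -2$. No step here is a real obstacle; the only thing to be careful about is the bookkeeping of the $\sum_C e_G(C,T)$ terms—ensuring the contribution from $\sum_{v\in T} d_{G-S}(v)$ matches the contribution from the parity of each component so that they cancel modulo~$2$—together with the fact that the $2e_G(T)$ term vanishes mod $2$.
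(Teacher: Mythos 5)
Your proof is correct and uses essentially the same approach as the paper: the same identities $\sum_{v\in T}d_{G-S}(v)=2e_G(T)+e_G(T,U)$ and $q(S,T,U)\equiv k|U|+e_G(U,T)\pmod 2$, with the same cancellation of the edge-count terms. The only difference is that you collapse everything into the single congruence $\delta_G(S,T,U)\equiv kn\equiv 0\pmod 2$, which avoids the paper's case split on the parity of $k$ — a minor streamlining, not a different argument.
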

\begin{proof}
    Let $(S, T, U)$ be any ordered partition in $G$, and let $C_1,...,C_m$ denote all components of $G[U]$. Then $q(S,T,U)$ can be thought of as the sum of $(k|C_i| + e(C_i,T)) \bmod 2$ over all $i$. Therefore $q(S,T,U) \equiv k|U| + e(U, T) \pmod 2$. Also, $\sum_{v\in T}d_{G-S}(v) = \sum_{v \in T} d_T(v) + \sum_{v \in T} d_U(v) = 2e(T)+e(U, T)$.
    We consider two cases based on the parity of $k$.
    
    Suppose that $k$ is even. Then $k|S| - k|T| \equiv 0 \pmod 2$, so
    \[
    \begin{aligned}
    \delta_G(S, T, U) &\equiv \big(k|S| - k|T|\big) + \big(2e(T) + e(U, T)\big) - \big(k|U| + e(U, T)\big) \\
    &= \big(k|S| - k|T|\big) + 2e(T) - k|U| \\
    &\equiv 0 \pmod 2.
    \end{aligned}
    \]

    Suppose that $k$ is odd. Then $n = |V(G)|$ must be even, since $nk$ is even. We now claim that $\delta_G(S, T, U) \equiv |V(G)| \pmod 2 \equiv 0 \pmod 2$. In this case, $k|S| - k|T| \equiv |S| - |T| \equiv |S| + |T| = |V(G)| - |U| \equiv |V(G)| + |U| \pmod 2$. Altogether, 
    \[
    \begin{aligned}
    \delta_G(S,T,U) &\equiv \big(|V(G)| + |U|\big) + \big(2e(T) + e(U, T)\big) - \big(k|U| + e(U, T)\big) \\
    &= |V(G)| + (1-k)|U| + 2e(T) \\
    &\equiv |V(G)| \pmod 2. 
    \end{aligned}
    \]
    In both cases, $\delta_G(S,T,U)$ is even.  In particular, $\delta_G(S, T, U) \le -2$ whenever $(S, T, U)$ is a barrier.
\end{proof}

The next lemma describes several structural properties of a maxmin barrier in $G$, which will be crucial in the next two sections.

\begin{lemma}\label{lem:maxmin-barrier}
    Let $(S, T, U)$ be a maxmin barrier of $G$.  
   \begin{enumerate}[label=(\roman*)]
   \item For all vertices $w \in U$, $e(w, T) \le k-1$; moreover, for all vertices $w$ in an even component of $G[U]$, $e(w, T) \le k-2$. 
   \item The maximum degree $\Delta(G[T]) \le k-2$.
   \item For any odd component $C$ of $G[U]$ and any vertex $w \in T$, $e(w, T) + e(w, C) \le k-1$.
   \item For any vertex $w \in T$, $e(w, T)+\sum_{C}e(w, C) \le k-2$, where $C$ ranges over all even components of $G[U]$.
   \end{enumerate}
\end{lemma}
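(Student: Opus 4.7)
The plan is to prove each of the four parts by contradiction, using the maxmin property of $(S, T, U)$. In every case I assume the stated inequality fails for some witness vertex $w$, construct a new ordered partition $(S', T', U')$ by moving $w$ to a different part, and show that $(S', T', U')$ is again a barrier which is strictly better in the maxmin sense---either $|S'| > |S|$ (ruled out by maximality of $|S|$) or $|S'| = |S|$ with $|T'| < |T|$ (ruled out by minimality of $|T|$). Combining the change $\Delta\delta := \delta_G(S',T',U') - \delta_G(S,T,U)$ with $\delta_G(S,T,U) \le -2$ from Lemma~\ref{lem:barrier-range}, it suffices to show $\Delta\delta \le 1$; the parity statement of Lemma~\ref{lem:barrier-range} then upgrades $\delta_G(S',T',U') \le -1$ to $\le -2$, confirming a barrier.

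For part (i), I would move $w \in U$ into $S$. Letting $C$ denote the component of $G[U]$ containing $w$, removal of $w$ may split $C$ into several pieces. A direct computation yields
\[
\Delta\delta = k - e(w,T) - \Delta q,
\]
where $\Delta q$ is the change in the odd-component count. Since $C$ contributed at most once originally, and any new pieces contribute nonnegatively to $q$, one always has $\Delta q \ge -1$, and $\Delta q \ge 0$ when $C$ is even. Substituting the hypothesized bound ($e(w,T) \ge k$ in general, or $e(w,T) \ge k-1$ in the even-component case) produces $\Delta\delta \le 1$, giving the desired contradictory barrier with $|S'| > |S|$.

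For parts (ii), (iii), and (iv) I would instead move a vertex $w \in T$ into $U$, which preserves $|S|$ and decreases $|T|$ by one. Let $C_1, \dots, C_r$ be the components of $G[U]$ adjacent to $w$; together with $w$ they merge into a single component $C^*$ of $G[U \cup \{w\}]$, whose parity I denote by $p \in \{0,1\}$. Writing $o$ for the number of $C_i$ that were odd, the change takes the form
\[
\Delta\delta = k - e(w,T) - d_U(w) - p + o.
\]
For (ii), the bounds $d_U(w) \ge r$ and $o \le r$, combined with the assumption $e(w,T) \ge k-1$, yield $\Delta\delta \le 1 - p \le 1$. For (iii), the hypothesis $e(w,T) + e(w,C) \ge k$ together with the already established (ii) forces $e(w,C) \ge 2$, so the odd component $C$ is among the merged $C_i$; then $d_U(w) \ge e(w,C) + (r-1)$ and $o \ge 1$ lead to the same estimate. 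For (iv), splitting the merged components into odd and even ones and invoking the lower bound $\sum_{C\text{ even}} e(w,C) \ge k - 1 - e(w,T)$ produces $d_U(w) \ge o + \sum_{C\text{ even}} e(w,C)$, again giving $\Delta\delta \le 1 - p$.

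The main obstacle is the parity bookkeeping for $\Delta q$. Removing $w$ from $U$ may split its component in unpredictable ways, while inserting $w$ into $U$ may merge multiple components at once; in either situation the new parities must be computed with respect to the updated triple $(S', T', U')$ rather than the original partition. The arithmetic is also tight---the bound $\Delta\delta \le 1$ cannot in general be improved to $\Delta\delta \le 0$---so invoking the evenness of $\delta_G$ from Lemma~\ref{lem:barrier-range} at exactly the right step is what enables the final conclusion that the modified triple is indeed a barrier.
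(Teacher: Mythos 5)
Your proposal is correct and takes essentially the same approach as the paper: part (i) by moving $w \in U$ into $S$, and parts (ii)--(iv) by moving $w \in T$ into $U$, in each case combining $\delta_G(S,T,U) \le -2$ from Lemma~\ref{lem:barrier-range} with the maxmin property to bound the change in $\delta_G$ and the odd-component count. Your exact accounting $q(S,T',U') = q(S,T,U) - o + p$ is just a slightly sharper form of the paper's estimate $q(S,T',U') \ge q(S,T,U) - m$, and your contradiction framing (producing a better barrier) is equivalent to the paper's direct argument that the modified triple satisfies $\delta_G \ge 0$.
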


\begin{proof}
For an arbitrary vertex $w \in U$, let $(S', T, U') = (S \cup \{w\}, T, U \setminus \{w\})$. Then
\begin{align*}
\delta_G(S', T, U') 
    &= k(|S|+1) - k|T| + \sum_{v \in T} d_{G-S}(v) - e(w, T) - q(S', T, U') \\
    &= \delta_G(S,T,U) + k - e(w, T) + \Big(q(S,T,U) - q(S',T,U')\Big).
\end{align*}
The difference $q(S,T,U) - q(S',T,U')$ is at most $1$, since only the component containing $w$ can change parity. Also, by Lemma~\ref{lem:barrier-range}, $\delta_G(S,T,U) \le -2$. Thus $\delta_G(S', T, U') \le (k - 1) - e(w, T)$. By the maxmin nature of $(S,T,U)$, we know $\delta_G(S', T, U') \geq 0$.  Hence $e(w,T) \leq k-1$, with equality only if $q(S,T,U) - q(S',T,U')=1$.  This difference is equal to 1 if and only if moving $w$ from $U$ to $S$ decreases the number of odd components by 1, which happens only if $w$ is in an odd component in $G[U]$; otherwise, $e(w,T) \leq k-2$. This establishes $(i)$.

For an arbitrary vertex $w \in T$, let $(S, T', U') = (S, T \setminus \{w\}, U \cup \{w\})$. Suppose that $w$ has neighbors in $m$ distinct odd components of $G[U]$; clearly, $m \le e(w, U)$. Since $|T|$ is minimal among all barriers with $S$ as the first part, it follows that $\delta_G(S, T', U') \ge 0$. Also, by the definition of an odd component, if $C$ is an odd component of $G[U]$ and $e(w, C) = 0$, then $C$ remains odd in $G[U']$. Hence $q(S, T', U') \ge q(S, T, U) - m$. Then
\begin{align*}
0 \le \delta_G(S, T', U') &=
    k|S| - k(|T|-1) + \sum_{v \in T} d_{G-S}(v) - d_{G-S}(w) - q(S, T', U') \\
    &= \delta_G(S,T,U) + k - d_{G-S}(w) + \Big(q(S,T,U) - q(S,T',U')\Big) \\
    &\le \delta_G(S,T,U) + k - d_{G-S}(w) + m.
\end{align*}
By Lemma~\ref{lem:barrier-range}, we have $0 \le \delta_G(S, T', U') \le  k - 2 + m - d_{G-S}(w)$, or $d_{G-S}(w) \le k-2+m$. On the other hand, $d_{G-S}(w) = e(w, T') + e(w, U) \ge e(w, T') + m$. Together, this implies $e(w, T') \le k-2$. 
Furthermore, for each odd component $C$, we have $e(w, T') + e(w, C) \le k-2+m-(m-1) = k-1$, since $w$ must send at least $m-1$ edges to the other odd components in which it has a neighbor, regardless of whether $w$ has a neighbor in $C$ or not. Also, since at least $m$ of the edges from $w$ in $G-S$ go to odd components, we have $e(w, T') + \sum_{C}e(w, C) \le k-2$, where the sum is over all even components $C$ of $G[U]$. This establishes $(ii)$, $(iii)$, and $(iv)$.
\end{proof}

\section{An Alternate Proof of Theorem~\ref{thm:k-factor}}\label{sec:k-factor}

\begin{proof}
    Let $k \ge 2$ be an integer, and let $G$ be a graph on $n$ vertices such that $\beta^k(G) \ge 1$ and $nk$ is even. Suppose that, on the contrary, $G$ does not admit a $k$-factor. Then $G$ admits a barrier. Let $(S, T, U)$ be a maxmin barrier. Denote by $C_1,\dots, C_{m_1}$ the odd components of $G[U]$, and by $D_1,\dots, D_{m_2}$ the even components. We partition the odd components of $G[U]$ into two classes: those with no neighbors in $T$ and those with at least one neighbor in $T$. We denote these classes by $\mathcal{H}_0$ and $\mathcal{H}_1$, respectively. Let $h_0 = |\mathcal{H}_0|$ and $h_1 = |\mathcal{H}_1|$. Then $q(S, T, U) = m_1 = h_0 + h_1$ and $e(U, T) = \sum_{i \in [m_1]}e(C_i, T) + \sum_{i \in [m_2]}e(D_i, T) \ge h_1$.  Based on the parity of $k$, we have two cases. 

    \textbf{Case 1: $k$ is even.}
    We first claim that $|S| < |T|$. When $k$ is even, each component $C$ of $G[U]$ is odd if and only if $e(C, T)$ is odd. In particular, this implies that $e(C_i, T) \ge 1$ for each $i \in [m_1]$, and thus $\sum_{v \in T}d_{G-S}(v) \ge e(U, T) \ge m_1 = q(S, T, U)$. It follows that $0 > \delta_G(S, T, U) \ge  k|S| - k|T|$, which yields $|T| > |S|$. 
    
    If $|T| \ge k$, then by Lemma~\ref{lem:maxmin-barrier}$(i)-(ii)$, $\Lambda^k(T) \subseteq S$, and thus $|S| \ge |\Lambda^k(T)| \ge \beta^k(G)|T| \ge |T|$, a contradiction. So $|T| \le k-1$, and hence $|S| + |T| \le 2|T| - 1 \le 2k-3$. As $\beta^k(G) \ge 1$, it follows that $n \ge 2k$, so $G[U]$ contains at least one component. We claim that for each component $C$ of $G[U]$, whether even or odd, we have $|C| \le k-|T|-1$. Otherwise, let $C$ be a component of $G[U]$ such that $|C| \ge k-|T|$. Let $Q:= T\cup C$, so $|Q| \ge k$. 
    Note that for each vertex $v \in U\setminus V(C)$, $|\Lambda(v) \cap Q| = |\Lambda(v) \cap T| \le k-1$, since $\Lambda(v) \cap V(C) = \emptyset$. So $(U\setminus V(C)) \cap \Lambda^k(Q) = \emptyset$. In addition, by Lemma~\ref{lem:maxmin-barrier}$(iii)-(iv)$, every vertex $v \in T$ has at most $k-1$ neighbors in $Q$. So $T \cap \Lambda^k(Q) = \emptyset$. Therefore $\Lambda^k(Q) \subseteq V(C) \cup S$, implying $\frac{|\Lambda^k(Q)|}{|Q|} \le \frac{|V(C)|+|S|}{|V(C)|+|T|} < 1$, which gives a contradiction to $\beta^k(G) \ge 1$. Thus every component of $U$ has at most $k-|T|-1$ vertices. 
     
    Now we consider the set $U' = U \cup T$. Note $|U'| = n-|S| > 2k-(k-1)=k+1$. We claim that $\Lambda^k(U') \subseteq S \cup T$. Let $v \in U$, and suppose that $v$ lies in a component $C$ of $G[U]$. We know $v$ has no neighbors in other components of $G[U]$, at most $|C|-1$ neighbors within $C$, and at most $|T|$ neighbors in $T$. Together, this implies $|e(v, U')| \leq |C|-1 + |T| < k$, and $v \notin \Lambda^k(U')$. Thus $\Lambda^k(U') \subseteq S \cup T$.  
     We now claim that $|U| > |S|$. It suffices to show $q(S, T, U) > |S|$. Since $\delta_G(S,T,U)<0$ and $\delta(G) \ge (\beta^k(G)+1)k -1 \ge 2k-1$ by Lemma~\ref{lem:minimum-degree-with-k}, we have
    \begin{align*}
    0&> k|S|-k|T|+\sum_{v\in T}d_{G-S}(v)-q(S,T, U) \\
    &\geq k|S|-k|T|+|T|(2k-1-|S|)-q(S,T,U)\\
    &= (k-|T|)|S|+(k-1)|T|-q(S,T,U)\\
    &\geq |S|+0-q(S,T,U)
    \end{align*}
    and therefore $q(S,T,U)>|S|$ as needed.
    Then $\frac{|\Lambda^k(U')|}{|U'|} \le \frac{|S|+|T|}{|U|+|T|} < 1$, contradicting $\beta^k(G) \ge 1$.

    \textbf{Case 2: $k$ is odd.} Then $k \ge 3$, and by Lemma~\ref{lem:barrier-range}, we have 
    \[
    \begin{aligned}
    -2 \ge \delta_G(S,T,U) &= k|S| - k|T| + \sum_{v \in T}d_{G-S}(v) - q(S, T, U)   \\
    &\ge k|S| - k|T| + e(U, T) - (h_0 + h_1) \\
    &\ge k|S| - k|T| - h_0.
    \end{aligned}
    \] 
    This implies that $k|S| \le k|T|+ h_0 - 2$, or $|S| \le |T| + \frac{1}{k}(h_0 - 2) < |T| + h_0$. We claim that $T \ne \emptyset$. If $T = \emptyset$, then $|S| \le \frac{1}{k}(h_0 - 2)$. Clearly, $S \ne \emptyset$; otherwise, $S=T=\emptyset$ and $G[U] = G$ is connected by Corollary~\ref{cor:connected}. Since $nk$ is even, $G[U]$ itself is an even component, and hence $\delta_G(S, T, U) =0$, a contradiction. So $1 \le |S| \le \frac1k (h_0 - 2)$, and thus $h_0 \ge k + 2$. Let $Q$ be the set consisting of a vertex from each component in $\mathcal{H}_0$. 
    Then $|Q| = h_0 \ge k+2$ and every vertex $v \notin S$ can be adjacent to at most one vertex in $Q$, which implies $\Lambda^k(Q) \subseteq S$. Thus $|S| \ge |\Lambda^k(Q)| \ge \beta^k(G)|Q| \ge h_0 > \frac 1k h_0$, contradicting $|S| \le \frac1k (h_0-2)$. So $|T| \ge 1$. 
    
    We now state the following claim, which gives an upper bound on the size of $\bigcup_{C \in \mathcal{H}_0}V(C)$. 

    \begin{claim}\label{claim:zero-vertex-size}
    Let $U_0 := \bigcup_{C \in \mathcal{H}_0} V(C)$.  
    \begin{itemize}
    \item If $|T| \le k - 1$, then $|U_0| \le k - |T| - 1$.
    \item If $|T| \ge k$, then $U_0 = \emptyset$, i.e. $h_0 = 0$.
    \end{itemize}
    \end{claim}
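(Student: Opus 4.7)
My plan is to prove both bullets by contradiction using the inequality
$|S| \le |T| + \tfrac{1}{k}(h_0 - 2)$, which follows from $\delta_G(S,T,U) \le -2$ (Lemma~\ref{lem:barrier-range}), the estimate $\sum_{v\in T} d_{G-S}(v) \ge e(U,T) \ge h_1$, and $q(S,T,U) = h_0 + h_1$. In each case I will construct a set $Q \subseteq T \cup U_0$ with $|Q|\ge k$ for which every vertex of $V(G)\setminus S$ has fewer than $k$ neighbors in $Q$; the hypothesis $\beta^k(G)\ge 1$ then yields $\Lambda^k(Q)\subseteq S$ and $|S| \ge |Q|$, which combined with the displayed inequality delivers a contradiction.

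For the second bullet, suppose $h_0 \ge 1$, and take $V' \subseteq U_0$ to be a transversal of $\mathcal{H}_0$, so that $|V'| = h_0$ and each vertex outside $V'$ has at most one neighbor in $V'$. Set $Q = T \cup V'$; since $|T|\ge k$ we have $|Q|\ge k$. Combining Lemma~\ref{lem:maxmin-barrier}(i)--(ii) with the defining property of $\mathcal{H}_0$ (no edges between $U_0$ and $T$) and the transversal property of $V'$, a routine case check on the position of $v\in V(G)\setminus S$ yields $|\Lambda(v)\cap Q|<k$ throughout. Hence $|S|\ge |T|+h_0$, which together with $|S|\le |T|+\tfrac{1}{k}(h_0-2)$ gives $(k-1)h_0\le -2$, contradicting $k\ge 3$.

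For the first bullet, the transversal alone only bounds $h_0$, not the total $|U_0|$. The plan is to inflate $V'$ as much as possible while preserving $|\Lambda(v)\cap V'|<k$ outside $S$: for each $C\in\mathcal{H}_0$, include all of $V(C)$ in $V'$ if $|C|\le k$, and include exactly $k-1$ vertices of $V(C)$ if $|C|\ge k+1$. Writing $\alpha$ and $\beta$ for the number of components of $\mathcal{H}_0$ with $|C|\ge k+1$ and $|C|\le k$ respectively, and $S_1 = \sum_{|C|\le k}|C|$, this gives $|V'| = S_1 + (k-1)\alpha$. Suppose for contradiction that $|U_0|\ge k-|T|$. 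If $\alpha\ge 1$ then $|V'|\ge k-1\ge k-|T|$ by the earlier bound $|T|\ge 1$, and if $\alpha=0$ then $|V'|=|U_0|\ge k-|T|$; either way $|Q|=|T|+|V'|\ge k$. The cap of $k-1$ on each large component is precisely what guarantees $|\Lambda(v)\cap V'|<k$ both for $v\in V'\cap V(C)$ and for $v\in V(C)\setminus V'$; the remaining cases ($v\in T$ and $v\in U\setminus U_0$) are handled as in the second bullet. Thus $\Lambda^k(Q)\subseteq S$ and $|S|\ge |T|+S_1+(k-1)\alpha$. Combining with $|S|\le |T|+\tfrac{1}{k}(\alpha+\beta-2)$ and $S_1\ge \beta$ gives
\[
    (k-1)\beta + (k^2-k-1)\alpha \le -2,
\]
which is impossible for $k\ge 3$ whenever $h_0 = \alpha+\beta\ge 1$. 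This forces $|U_0|\le k-|T|-1$.

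The main obstacle is handling the potentially large components in $\mathcal{H}_0$: a direct transversal of $\mathcal{H}_0$ only controls $h_0$, yet a single odd component with no neighbors in $T$ could in principle be arbitrarily large and by itself violate the desired bound on $|U_0|$. Capping the contribution of each large component at $k-1$ is the crucial device, since it preserves the ``$<k$ neighbors in $V'$'' property on both sides of the cut while still growing $V'$ enough that the binding-number lower bound on $|S|$ collides with the $\delta_G$-based upper bound.
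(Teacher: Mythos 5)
Your proof is correct and follows essentially the same route as the paper's: in both bullets you augment $T$ by vertices chosen from the components of $\mathcal{H}_0$, capped per component so that every vertex outside $S$ keeps fewer than $k$ neighbors in the augmented set, conclude $\Lambda^k(Q) \subseteq S$, and let $\beta^k(G) \ge 1$ force $|S| \ge |Q|$, contradicting the barrier-derived bound on $|S|$. The only cosmetic differences are that the paper inflates $T$ just enough to reach size $\max\{k, |T|+h_0\}$ using at most $k-1$ vertices per component and contradicts $|S| < |T| + h_0$ directly, whereas you inflate maximally (allowing all $k$ vertices of a small component, which is equally valid since such vertices still have at most $k-1$ neighbors inside it) and reach the contradiction through the sharper inequality $|S| \le |T| + \frac{1}{k}(h_0 - 2)$.
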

    \begin{proof}
    To justify the first item, assume by way of contradiciton that $|T| \le k-1$ and $|U_0| \ge k -|T|$. Then $|U_0| \ge 1$, and hence $h_0 \ge 1$. There exists a set $T'$ with $|T'| \ge \max\{k, |T|+h_0\}$ that satisfies the following conditions:
    \begin{enumerate}[label=(\alph*)]
    \item $T \subseteq T'$, and
    \item $T'$ contains at least one and at most $k-1$ vertices from each component in $\mathcal{H}_0$.
    \end{enumerate}
    To prove the existence of such a set $T'$, we proceed as follows. If $h_0 \ge k-|T|$, then we select exactly one vertex from each component in $\mathcal{H}_0$ and combine these vertices with $T$ to form the set $T'$. Then $|T'| = |T| + h_0 \ge k$. If $h_0 \le k-|T|-1$, we first combine $T$ with exactly one vertex from each component in $\mathcal{H}_0$, and then we add $k-(|T|+h_0)$ additional vertices from $U_0$ to create $T'$.  These vertices of $U_0$ exist since $|U_0| \ge k - |T|$, and $|T|\geq 1$ and $h_0 \geq 1$ ensure that $k-(|T|+h_0) \leq k-2$, so no component in $\mathcal{H}_0$ contributes more than $k-1$ to $T'$. Therefore there is a set $T'$ satisfying $|T'| \ge \max\{k, |T|+h_0\}$ and conditions $(a)$ and $(b)$.
    
    We claim that $\Lambda^k(T') \subseteq S$. 
    Let $v \in U$; say $v \in V(C)$. If $C \in \mathcal{H}_0$, then $e(C, T) = 0$ and $|\Lambda(v) \cap T'| = |\Lambda(v) \cap (T'\setminus T)| \le k-1$ by $(b)$. If $C \notin \mathcal{H}_0$, then $\Lambda(v)\cap U_0 =\emptyset$, and thus $|\Lambda(v) \cap T'| = |\Lambda(v) \cap T| \le |T| \le k-1$. In either case, we have $v \notin \Lambda^k(T')$. So $\Lambda^k(T') \subseteq S \cup T$. Also, by the definition of $U_0$, we have $e(U_0, T) = 0$. This implies that for each $v \in T$, $|\Lambda(v) \cap T'| = |\Lambda(v) \cap T| \le k-1$, leading to $\Lambda^k(T') \cap T = \emptyset$. So $\Lambda^k(T') \subseteq S$, and thus $|S| \ge |\Lambda^k(T')| \ge |T'| \ge \max\{k, |T|+h_0\}$, contradicting our earlier observation that $|S| < |T|+h_0$. Therefore $|U_0| \le k-|T|-1$.

    To justify the second item, we assume $|T|\ge k$ and, to the contrary, that $U_0 \ne \emptyset$. Then $h_0 \ge 1$. As in the previous case, let $T'$ be the set obtained by adding exactly one vertex from each component in $\mathcal{H}_0$ to $T$; since $|T| \ge k$, we have $|T'| = |T| + h_0 \ge k + 1$. We claim as before that $\Lambda^k(T') \subseteq S$, which would again yield a contradiction. Let $v \in U$; say $v \in V(C)$. If $C \in \mathcal{H}_0$, then $|\Lambda(v) \cap T'| \le 1$. If $C \notin \mathcal{H}_0$, then $|\Lambda(v) \cap T'| = |\Lambda(v) \cap T| \le k-1$ by Lemma~\ref{lem:maxmin-barrier}$(i)$. Thus $\Lambda^k(T') \cap U = \emptyset$. Also, $e(U_0, T) = 0$. Combined with Lemma~\ref{lem:maxmin-barrier}$(ii)$, this implies that for each $v \in T$, $|\Lambda(v) \cap T'| = |\Lambda(v) \cap T| \le k - 2$. It follows that $\Lambda^k(T')\cap T = \emptyset$, and hence $\Lambda^k(T') \subseteq S$. 
    \end{proof}
    
    Suppose that $|T| \ge k$. By Claim~\ref{claim:zero-vertex-size}, we know $h_0 = 0$, and thus $|S| < |T|$ as $\delta_G(S, T, U) < 0$. By Lemma~\ref{lem:maxmin-barrier}$(i)-(ii)$, each vertex $v \notin S$ satisfies $|\Lambda(v) \cap T| \le k-1$. This implies $\Lambda^k(T) \subseteq S$, that is, $|S| \ge |\Lambda^k(T)|$. Then $\frac{|\Lambda^k(T)|}{|T|} \le \frac{|S|}{|T|} < 1$, contradicting $\beta^k(G) \ge 1$.

    Suppose that $1 \le |T| \le k-1$. 
    By Lemma~\ref{lem:minimum-degree-with-k} with $\beta^k(G) \ge 1$, we have $d_{G-S}(v) \ge 2k-1-|S|$ for each $v \in T$. Then, 
    \[
    \begin{aligned}
     -2 \ge \delta_G(S,T,U) &\ge k|S|+(2k-1-|S|)|T|-k|T|-q(S,T,U)\\
     &=(k-|T|)|S|+(k-1)|T|-q(S,T,U),
    \end{aligned}
    \]
    which gives us $q(S,T,U) \ge (k-|T|)|S|+(k-1)|T| + 2$. Since $k \ge 3$ and $1 \le |T| \le k-1$, both inequalities $q(S, T, U) \ge k+1$ and $q(S, T, U) \ge |S| + |T| + 2$ hold. Let $Q$ be a vertex set consisting of a vertex from each odd component of $G[U]$, so $|Q| = q(S, T, U) \ge k+1$. For every $v\in U$, by definition, we have $e(v, Q) \le 1 <k$, and so $\Lambda^k(Q) \subseteq S\cup T$. Hence $|S| + |T| \ge |Q| = q(S, T, U) \ge |S|+|T|+2$, which is a contradiction.     
    
    In both cases, we arrive at a contradiction. Therefore $G$ contains a $k$-factor, as claimed.
\end{proof}

\section{Proof of Theorem~\ref{thm:k+1-factor-split-graph}}\label{sec:split}

This section focuses on split graphs. The proof of Theorem~\ref{thm:k+1-factor-split-graph} is first given for the base case $k=2$ using Tutte's $3$-factor theorem, and then extended to all $k \ge 2$.

\begin{observation}\label{obs:Y-range-split}
    Let $k \ge 2$ be an integer and $G$ be a split graph with split partition $(X, Y)$ such that $X$ is an independent set and $Y$ is a clique. If $\beta^k(G) \ge 1$, then $|Y| \ge \max\{k, |X| + k-1\}$.
\end{observation}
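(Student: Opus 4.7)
The plan is to test the definition of $\beta^k(G)$ on a single carefully chosen witness set $S\subseteq V(G)$, constructed to exploit the independence of $X$. The key observation is that if we pad $X$ with only $k-1$ vertices from $Y$, then no vertex of $X$ can have $k$ neighbors inside $S$, so $\Lambda^k(S)$ is forced to lie inside $Y$, and the expansion hypothesis immediately pushes $|Y|$ above $|X|+k-1$.

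First I would dispose of the trivial case $|X|=0$: then $G$ is a clique on $|Y|$ vertices, and Observation~\ref{obs-1} together with $\beta^k(G)\ge 1$ forces $|Y|=n\ge 2k\ge k=\max\{k,k-1\}$. So assume $|X|\ge 1$. Picking any $x\in X$, Lemma~\ref{lem:minimum-degree-with-k}(1) gives $d(x)\ge 2k-1$, and all neighbors of $x$ lie in $Y$ because $X$ is independent; hence $|Y|\ge 2k-1$, which in particular guarantees that we may select some $Y'\subseteq Y$ with $|Y'|=k-1$.

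Now set $S:=X\cup Y'$, so $|S|=|X|+k-1\ge k$. For any $x'\in X$, independence of $X$ gives $|\Lambda(x')\cap S|=|\Lambda(x')\cap Y'|\le |Y'|=k-1$, so $x'\notin\Lambda^k(S)$; consequently $\Lambda^k(S)\subseteq Y\ne V(G)$. The definition of $\beta^k(G)$ then yields
\[
|Y|\ \ge\ |\Lambda^k(S)|\ \ge\ \beta^k(G)\,|S|\ \ge\ |X|+k-1.
\]
Since $|X|\ge 1$, this also gives $|Y|\ge k$, completing the bound $|Y|\ge\max\{k,|X|+k-1\}$.

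There is no serious obstacle to anticipate; the entire argument rests on a single design choice, namely the size of $Y'$. Padding $X$ with exactly $k-1$ elements of $Y$ is the sweet spot: a larger $Y'$ could admit vertices of $X$ into $\Lambda^k(S)$ and spoil the inclusion $\Lambda^k(S)\subseteq Y$, while a smaller $Y'$ would shrink $|S|$ and yield a weaker lower bound on $|Y|$.
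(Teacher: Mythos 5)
Your proof is correct and takes essentially the same approach as the paper: both pad $X$ with $k-1$ vertices of $Y$ so that $\Lambda^k$ of the resulting set is trapped inside $Y$, then apply the binding-number condition. The only cosmetic difference is that you justify the availability of $k-1$ vertices in $Y$ up front via the minimum-degree bound of Lemma~\ref{lem:minimum-degree-with-k}, whereas the paper takes $\min\{k-1,|Y|\}$ vertices of $Y$ and bootstraps to $|Y|\ge k$.
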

\begin{proof}
    Since $\beta^k(G) \ge 1$, Observation~\ref{obs-1} implies that $|G| = |X| + |Y| \ge 2k$. We may assume $X \ne \emptyset$; otherwise, $G$ is a complete graph, and so the result holds. Let $Q \supseteq X$ such that $|Q\cap Y| = \min\{k-1, |Y|\}$. Since $|X|+|Y| \ge 2k$, it follows that $|Q| \ge k$. Then, for each vertex $x \in X$, $|\Lambda(x) \cap Q| \le |Q \cap Y| \le k-1$, as $X$ is independent. Therefore $\Lambda^k(Q) \subseteq Y$, and we conclude $|Y| \ge \beta^k(G)|Q| \ge |X|$. In particular, we have $|Y| \ge k$, which in turn shows that $|Y| \ge |Q| = |X| + k - 1$.
\end{proof}

\subsection{The Base Case}
Let $k = 2$, and let $G$ be a graph of even order $n$ with $\beta^2(G) \ge 1$. Then, $n \ge 4$. Assume that $G$ contains no $3$-factor. By Tutte's $3$-factor Theorem, $G$ admits a barrier, and let $(S, T, U)$ be a maxmin barrier. For convenience, we restate the properties of $(S, T, U)$ from Lemma~\ref{lem:maxmin-barrier} for $k=3$.

\begin{claim}[Special case of Lemma~\ref{lem:maxmin-barrier}]\label{claim:maxmin-barrier-3-factor}
The following properties hold:
\begin{enumerate}[label=(\roman*)]
   \item For all vertices $w \in U$, $e(w, T) \le 2$; moreover, if $w$ lies in an even component of $G[U]$, then $e(w, T) \le 1$. 
   \item The maximum degree $\Delta(G[T]) \le 1$.
   \item For any odd component $C$ of $G[U]$ and any vertex $w \in T$, $e(w, T) + e(w, C) \le 2$.
   \item For any vertex $w \in T$, $e(w, T)+\sum_{C}e(w, C) \le 1$, where $C$ ranges over all even components of $G[U]$.
   \end{enumerate}
\end{claim}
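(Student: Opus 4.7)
The plan is to observe that Claim~\ref{claim:maxmin-barrier-3-factor} is exactly Lemma~\ref{lem:maxmin-barrier} specialized to $k = 3$, so I would prove it by simply invoking that lemma. What needs to be checked is that the hypotheses of Lemma~\ref{lem:maxmin-barrier} are satisfied in the setting of Section~\ref{sec:split}: the ambient graph $G$ is assumed to have even order $n$, and the argument proceeds under the contradiction hypothesis that $G$ has no $3$-factor. Taking $k = 3$ in the statement of Lemma~\ref{lem:maxmin-barrier}, the parity condition ``$nk$ even'' is immediate because $n$ is even, so the hypotheses of Lemma~\ref{lem:maxmin-barrier} are in place. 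The maxmin barrier $(S, T, U)$ introduced just before the claim is chosen in precisely the sense defined in Section~\ref{sec:Pre}, so it falls within the scope of the lemma.

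Once those hypotheses are verified, parts~(i)--(iv) of Lemma~\ref{lem:maxmin-barrier} specialize line-by-line to the numerical bounds $k - 1 = 2$ in (i) and (iii) and $k - 2 = 1$ in (i), (ii), and (iv), matching Claim~\ref{claim:maxmin-barrier-3-factor} verbatim. Two small sanity checks round out the argument. First, Lemma~\ref{lem:maxmin-barrier} itself does not use any binding-number hypothesis, so the standing assumption $\beta^2(G) \ge 1$ of Section~\ref{sec:split} plays no role in deriving the claim and is reserved for the subsequent step. Second, the notion of an odd component in Lemma~\ref{lem:maxmin-barrier} is the general one, namely that a component $C$ of $G[U]$ is odd when $k|C| + e(C, T)$ is odd; for $k = 3$ this condition reduces to $|C| + e(C, T)$ being odd, so the parity convention transfers without any reinterpretation.

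There is no genuine obstacle in the proof: the claim is recorded only for convenient local reference, collecting the $k = 3$ specializations of the structural bounds in one place before they are invoked repeatedly in the forthcoming base-case argument for Theorem~\ref{thm:k+1-factor-split-graph}. If anything were to require care, it would be ensuring that the maxmin selection rule used to pick $(S, T, U)$ in Section~\ref{sec:split} coincides with the one in Section~\ref{sec:Pre} (first maximizing $|S|$, then minimizing $|T|$), but the wording directly quotes that selection rule, so the specialization is immediate.
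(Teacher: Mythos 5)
Your proposal is correct and matches the paper exactly: the paper offers no separate proof of this claim, stating only that it ``restate[s] the properties of $(S,T,U)$ from Lemma~\ref{lem:maxmin-barrier} for $k=3$.'' Your verification of the hypotheses (in particular that the relevant $k$ is $3$, not the binding-number index $2$, and that $3n$ is even because $n$ is) is the right sanity check and nothing more is needed.
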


By Lemma~\ref{lem:barrier-range}, we will have 
\begin{equation}\label{deltaSTU}
\delta_G(S,T,U) = 3|S|-3|T|+\sum_{v \in T} d_{G-S}(v)-q(S,T,U) \le -2 
\end{equation}
throughout this proof.  We also make the following observation:
\begin{observation}\label{Lambda2Q}
    If $Q \subset U$ is a set that contains at most one vertex from any component of $G[U]$, and $|Q| \ge 2$, then $\Lambda^2(Q) \subseteq S \cup T$, so $\beta^2(G) \geq 1$ implies $|Q| \leq |S|+|T|.$
\end{observation}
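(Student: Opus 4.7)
The plan is to establish the two claims of the observation in order: first verifying the inclusion $\Lambda^2(Q) \subseteq S \cup T$, and then deriving the size bound $|Q| \le |S|+|T|$ from the hypothesis $\beta^2(G) \ge 1$.

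For the inclusion, I would pick any $v \in U$ and show that $v$ has at most one neighbor in $Q$. Since $v$ lies in a unique component $C$ of $G[U]$, every neighbor of $v$ inside $U$ must belong to $V(C)$. By hypothesis $Q$ contributes at most one vertex to $V(C)$, so $|\Lambda(v) \cap Q| \le 1$, forcing $v \notin \Lambda^2(Q)$. Ranging over $v \in U$ gives $\Lambda^2(Q) \cap U = \emptyset$, i.e., $\Lambda^2(Q) \subseteq S \cup T$.

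For the size bound, note that because $|Q| \ge 2$ and $Q \subseteq U$ is disjoint from $S \cup T$, the set $Q$ itself lies outside $\Lambda^2(Q)$; in particular $\Lambda^2(Q) \ne V(G)$. The definition of $\beta^2(G) \ge 1$ then gives $|\Lambda^2(Q)| \ge |Q|$, and combining this with the already-established inclusion $\Lambda^2(Q) \subseteq S \cup T$ yields $|Q| \le |S|+|T|$. There is no real obstacle here; the only point requiring a moment of care is that one must verify $\Lambda^2(Q) \ne V(G)$ before invoking the definition of $\beta^2(G)$, which is immediate once we observe that $Q$ itself witnesses the strict inclusion.
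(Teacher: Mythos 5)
Your proof is correct and is precisely the argument the paper intends (the observation is stated without proof there, but the same reasoning appears implicitly in Lemma~\ref{lem:1-factor-exist} and Lemma~\ref{lem:hypomatchable}): a vertex of $U$ has all its $U$-neighbors in its own component, hence at most one neighbor in $Q$, and $Q\cap\Lambda^2(Q)=\emptyset$ guarantees $\Lambda^2(Q)\ne V(G)$ so the binding-number condition applies. No gaps.
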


The next claim justifies applying the condition $\beta^2(G) \ge 1$ to the set $T$ or to any set containing $T$. 

\begin{claim}\label{claim:large-t-3-factor}
$|T| \ge 2$.
\end{claim}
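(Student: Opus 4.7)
The plan is to argue by contradiction, assuming $|T|\le 1$, and in each case apply $\beta^2(G)\ge 1$ to the set $Q\subseteq U$ consisting of one vertex from each odd component of $G[U]$. Observation~\ref{Lambda2Q} will then give $q:=q(S,T,U)=|Q|\le |S|+|T|$, and I combine this with a lower bound on $q$ obtained from $\delta_G(S,T,U)\le -2$ (Lemma~\ref{lem:barrier-range}) together with the minimum-degree estimate $\delta(G)\ge 3$ from Lemma~\ref{lem:minimum-degree-with-k}.

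For $|T|=0$, equation~\eqref{deltaSTU} reduces to $3|S|-q\le -2$, so $q\ge 3|S|+2\ge 2$. Hence the set $Q$ is well-defined with $|Q|\ge 2$; since $U\ne\emptyset$ and $\Lambda^2(Q)\subseteq S\cup T$, the set $\Lambda^2(Q)$ is a proper subset of $V(G)$, and Observation~\ref{Lambda2Q} yields $q\le |S|$. Combining the two bounds gives $3|S|+2\le |S|$, which is impossible.

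For $|T|=1$, write $T=\{t\}$. Since $\delta(G)\ge 3$, we have $d_{G-S}(t)\ge \max\{0,\,3-|S|\}$. Substituting into $3|S|-3+d_{G-S}(t)-q\le -2$ produces, respectively, $q\ge 2$ when $|S|=0$, $q\ge 4$ when $|S|=1$, $q\ge 6$ when $|S|=2$, and $q\ge 3|S|-1$ when $|S|\ge 3$. In every subcase $q\ge 2$, so the same choice of $Q$ together with Observation~\ref{Lambda2Q} yields $q\le |S|+1$; a short arithmetic check then shows each of these upper bounds contradicts its corresponding lower bound. The only subtlety is keeping the parity definition of an odd component straight (now with $k=3$ in Tutte's theorem) and confirming $U\ne\emptyset$ so that Observation~\ref{Lambda2Q} actually applies; notably, the split-graph hypothesis of Theorem~\ref{thm:k+1-factor-split-graph} plays no role in this particular claim.
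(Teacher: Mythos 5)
Your proposal is correct and follows essentially the same route as the paper: both cases use the deficiency inequality together with the minimum-degree bound from Lemma~\ref{lem:minimum-degree-with-k} to force $q(S,T,U)\ge 2$, then pick one vertex per odd component and apply Observation~\ref{Lambda2Q} to get the contradictory upper bound $q(S,T,U)\le |S|+|T|$. The paper handles $|T|=1$ in one stroke via $d_{G-S}(x)\ge 3-|S|$ (valid even when negative), yielding $q\ge 2|S|+2$ against $q\le |S|+1$; your subdivision by $|S|$ is harmless but unnecessary.
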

\begin{proof}
Inequality~\ref{deltaSTU} implies $q(S, T, U) \geq 3|S| - 3|T| + \sum_{v \in T} d_{G-S}(v) + 2$.  If $|T|=0$, then we see $q(S,T,U) \geq 2$. Let $Q$ be a set containing one vertex from each odd component in $G[U]$; by Observation~\ref{Lambda2Q}, $|S|\ge|Q|=q(S,T,U)$, and Inequality~\ref{deltaSTU} yields $|S| \geq 3|S|-0+0+2$, a contradiction.  If $|T|=1$, let $x$ be the only vertex of $T$. By Lemma~\ref{lem:minimum-degree-with-k}, $d_G(x) \ge 3$, and therefore $d_{G-S}(x) \ge 3-|S|$. This reduces Inequality~\ref{deltaSTU} to $q(S, T, U) \geq 3|S| -3 + (3-|S|) + 2 = 2|S| + 2$. Once again,  $q(S,T,U) \ge 2$, and letting $Q$ be a set containing one vertex from each odd component of $G[U]$ and applying Observation~\ref{Lambda2Q} yields $q(S,T,U)\leq |S|+1$.  This again is a contradiction. Hence $|T|\geq 2.$
\end{proof}

\begin{claim}\label{claim:q>0-3-factor}
    $q(S,T,U) > 0$. 
\end{claim}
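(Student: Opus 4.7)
The plan is to proceed by contradiction: suppose $q(S,T,U) = 0$, so every component of $G[U]$ is even. First I would substitute $q(S,T,U) = 0$ into Inequality~\ref{deltaSTU} and use $\sum_{v \in T} d_{G-S}(v) \ge 0$ to deduce $3|S| - 3|T| \le -2$, which immediately forces $|S| < |T|$.

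Next, I would show $\Lambda^2(T) \subseteq S$ using the structural properties of the maxmin barrier. Since every vertex of $U$ lies in an even component of $G[U]$, Claim~\ref{claim:maxmin-barrier-3-factor}(i) yields $e(w,T) \le 1$ for each $w \in U$, so $\Lambda^2(T) \cap U = \emptyset$. By Claim~\ref{claim:maxmin-barrier-3-factor}(ii), $\Delta(G[T]) \le 1$, so no vertex of $T$ has two neighbors in $T$, giving $\Lambda^2(T) \cap T = \emptyset$. Combined, $\Lambda^2(T) \subseteq S$.

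Finally, I would apply the hypothesis $\beta^2(G) \ge 1$ to the set $T$. Claim~\ref{claim:large-t-3-factor} provides $|T| \ge 2$, and any $v \in T$ lies outside $\Lambda^2(T)$ (since $d_T(v) \le 1$), so in particular $\Lambda^2(T) \ne V(G)$. Hence $|\Lambda^2(T)| \ge \beta^2(G)|T| \ge |T|$, and since $\Lambda^2(T) \subseteq S$, this yields $|S| \ge |T|$, contradicting $|S| < |T|$ from the first step.

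The argument is fairly direct; the only subtlety is verifying that the definitional prerequisites for applying $\beta^2(G) \ge 1$ to $T$ are met, both of which come readily from Claim~\ref{claim:large-t-3-factor} together with the bounds on $\Delta(G[T])$ and $e(w,T)$ supplied by Claim~\ref{claim:maxmin-barrier-3-factor}. The whole proof is quite parallel in spirit to Observation~\ref{Lambda2Q}, except that here we use $T$ (rather than a transversal of odd components) as the set to which we apply the binding-number hypothesis.
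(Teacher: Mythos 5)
Your proposal is correct and follows essentially the same route as the paper's own proof: assume $q(S,T,U)=0$, use Claim~\ref{claim:maxmin-barrier-3-factor}$(i)$--$(ii)$ to get $\Lambda^2(T)\subseteq S$, and contradict $|S|<|T|$ from Inequality~\ref{deltaSTU} via $\beta^2(G)\ge 1$. You are in fact slightly more careful than the paper in verifying the prerequisites ($|T|\ge 2$ and $\Lambda^2(T)\neq V(G)$) for invoking the binding-number condition.
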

\begin{proof}
Suppose not. By Claim~\ref{claim:maxmin-barrier-3-factor}$(i)-(ii)$, $e(w, T) \le 1$ for all $w \notin S$, which means that $\Lambda^2(T) \subseteq S$. Because $\beta^2(G) \ge 1$, we must have $|S| \ge |T|$, contradicting Inequality~\ref{deltaSTU}.
\end{proof}

For the remainder of the base case, we further assume that $G$ is a split graph with a split partition $(X, Y)$. The pair $(X, Y)$ is not necessarily unique, since a vertex $x \in X$ adjacent to every vertex in $Y$ could be moved to $Y$. For concreteness, we assume that $(X,Y)$ is a pair where $X$ is independent, $Y$ is a clique, and $|X|$ is as small as possible: no vertex of $X$ is adjacent to all of $Y$. 

Finally, we assume that $G$ is edge-maximal with these properties: there is no graph $G'$ with $V(G') = V(G)$ and $E(G') \supsetneq E(G)$ such that $G'$ is also a split graph with no $3$-factor.

Under the assumptions above, we can say more about the structural properties of $(S,T,U)$:

\begin{claim}\label{claim:split-stu-structure}
The following statements hold:
\begin{enumerate}
\item[(a)] $S \subseteq Y$.
\item[(b)] $|T \cap Y| \le 2$, and $G[T]$ contains at most one edge.
\item[(c)] At most one component of $G[U]$ intersects $Y$; all others are isolated vertices.
\item[(d)] For any vertex $x \in T \cap X$, if no component of $G[U]$ intersects $Y$ or the component intersecting $Y$ is even, then $d_{G-S}(x) \le 1$; otherwise, $d_{G-S}(x) \le 2$.
\item[(e)] For a vertex $x \in U \cap X$, if no component of $G[U]$ intersects $Y$ or if $x$ is in such a component, then $e(x, T) \le |T \cap Y|-1$.
\end{enumerate}
\end{claim}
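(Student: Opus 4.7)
I would prove the five parts by combining three ingredients: the split structure (since $Y$ is a clique, $Y \cap U$ lies in a single component of $G[U]$, and since $X$ is independent, $X$-vertices have all neighbors in $Y$), the degree/parity constraints from Claim~\ref{claim:maxmin-barrier-3-factor}, and the edge-maximality of $G$ as a split graph with no $3$-factor.

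\emph{Parts (c), (b), and (d).} Part (c) follows directly from split structure: $Y \cap U$ is a clique and thus lies in a single component $C$ of $G[U]$; any $x \in U \cap X$ adjacent to some vertex of $Y \cap U$ joins $C$, and every other $x \in U \cap X$ is isolated in $G[U]$ by the independence of $X$. Part (b) uses $\Delta(G[T]) \le k-2 = 1$ from Claim~\ref{claim:maxmin-barrier-3-factor}(ii): the clique $T \cap Y$ forces $|T \cap Y| \le 2$, and inspecting $|T \cap Y| \in \{0,1,2\}$ shows $G[T]$ has at most one edge (the edge of $T\cap Y$ saturates its endpoints when $|T\cap Y|=2$). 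For (d), neighbors of $x \in T \cap X$ lie in $Y$, so $e(x,U) = e(x,C)$ when $C$ exists and $e(x,U) = 0$ otherwise; Claim~\ref{claim:maxmin-barrier-3-factor}(iv) gives $e(x,T) + e(x,C) \le 1$ when $C$ is even (or absent, where only $e(x,T) \le \Delta(G[T]) \le 1$ contributes), while (iii) gives $e(x,T) + e(x,C) \le 2$ when $C$ is odd.

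\emph{Part (a).} This is the first use of edge-maximality. Suppose $x \in S \cap X$. By minimality of $|X|$ there is $y \in Y$ with $xy \notin E(G)$, so $G^+ := G + xy$ is still a split graph on $(X,Y)$ and by edge-maximality admits a $3$-factor. But $x \in S$ means the new edge contributes nothing to $d_{G-S}(v)$ for any $v \in T$ and leaves $G[U]$ unchanged, so $\delta_{G^+}(S,T,U) = \delta_G(S,T,U) \le -2$. Thus $(S,T,U)$ remains a barrier in $G^+$, contradicting that $G^+$ has a $3$-factor.

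\emph{Part (e).} This will be the main obstacle. First I would repeat the edge-maximality trick from (a) to show that every $x \in U \cap X$ is adjacent to all of $S$: if $y \in S$ were a non-neighbor of $x$, then $G + xy$ is a split graph in which the new edge has $y \in S$, again leaving $\delta(S,T,U)$ unchanged and preserving the barrier. Now suppose toward contradiction that $e(x,T) = |T \cap Y|$, so $x$ is adjacent to all of $S \cup (T \cap Y)$. In the no-$Y$-component case $Y = S \cup (T \cap Y)$, forcing $x$ to be adjacent to all of $Y$, which contradicts the minimality of $(X,Y)$. In the other case $x \in C$, any non-neighbor $y' \in Y$ of $x$ must lie in $U \cap Y \subseteq C$; then $xy'$ is an edge inside the single component $C$, so $(G+xy')[U]$ has the same components with the same parities as $G[U]$, and the barrier persists, contradicting edge-maximality. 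The scenario deliberately excluded from (e)—namely $x \in U \cap X$ isolated in $G[U]$ while $C$ exists—genuinely fails this argument, since adding an edge from $x$ to $C$ merges the singleton $\{x\}$ into $C$ and can flip the parity, so that case is correctly omitted from the statement.
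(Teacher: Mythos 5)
Your proof is correct and takes essentially the same approach as the paper: parts (b)--(d) are the same direct deductions from the split structure and Claim~\ref{claim:maxmin-barrier-3-factor}, while parts (a) and (e) use the paper's perturbation trick, the only cosmetic difference being that you add a single missing edge (guaranteed by the minimality of $|X|$) and contradict edge-maximality, where the paper adds all edges from $x$ to $Y$, re-partitions as $(X\setminus\{x\}, Y\cup\{x\})$, and contradicts edge-maximality or the choice of $(X,Y)$. The only detail to make explicit is that in the second subcase of (e) a non-neighbor $y' \in Y$ of $x$ exists, again by the minimality of $|X|$ --- the same fact you already invoked in (a).
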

\begin{proof}
To prove (a), suppose for contradiction that $x \in S \cap X$, and let $H$ be the graph obtained from $G$ by adding all edges between $x$ and $Y$. The graph $H$ is a split graph with split partition $(X\setminus\{x\}, Y \cup \{x\})$. Also, $\delta_H(S,T,U) = \delta_G(S,T,U)$, which is negative by assumption, so by Tutte's $3$-factor theorem, $H$ does not have a $3$-factor. This contradicts either the assumption of edge-maximality of $G$ or our choice of $(X, Y)$.

To prove (b), first recall from Claim~\ref{claim:maxmin-barrier-3-factor}$(ii)$ that $G[T]$ has maximum degree $1$. Since $G[T \cap Y]$ is a clique, it can have at most $2$ vertices, or it exceeds this maximum degree. If $T \cap Y = \emptyset$, $T$ must be an independent set. If $T \cap Y = \{y\}$, then all edges in $G[T]$ must have $y$ as an endpoint; because $e(y, T) \le 1$, there can be at most one edge. Finally, if $T \cap Y = \{y_1, y_2\}$, then all edges in $G[T]$ must have $y_1$ or $y_2$ as an endpoint; because $e(y_1, T), e(y_2, T) \le 1$ and $y_1y_2$ is an edge in $G[T]$, there can be no other edges.

Part (c) holds because $G[U]$ is also a split graph; $G[U \cap Y]$ is a clique, so it is contained entirely in one component of $G[U]$. All other components lie in the independent set $X$ and are therefore isolated vertices.

To prove (d), let $x \in T\cap X$ be arbitrary. By part (c), there is at most one component of $G[U]$ intersecting $Y$; if such a component exists, denote it by $M$. Then $d_{G-S}(x) = e(x,T) + e(x,M)$, where $e(x,M) = 0$ if $M$ does not exist. By Claim~\ref{claim:maxmin-barrier-3-factor}$(ii)-(iv)$, we have $d_{G-S}(x) \le 1$ if $M$ is nonexistent or even, and $d_{G-S}(x) \le 2$ if $M$ is odd.
  
In part (e), since $x \in X$, all neighbors of $x$ in $T$ are in $T \cap Y$. Suppose for contradiction $x$ is adjacent to all of them. Then let $H$ be the graph obtained from $G$ by adding all edges from $x$ to $Y$. The graph $H$ is a split graph with split partition $(X\setminus \{x\}, Y \cup \{x\})$. Also, the new edges of $H$ are added only from $x$ to $S$ or to $x$'s component of $G[U]$, so $(S,T,U)$ is still a barrier in $H$, and $H$ does not have a $3$-factor. This contradicts either the assumption of edge-maximality of $G$ or our choice of $(X,Y)$.
\end{proof}

Out of all maxmin barriers, select the one with the minimum value of $q(S,T,U)$. If $U \cap Y \ne \emptyset$, we call the component of $G[U]$ containing the vertices of $Y$ the \emph{monster component} $M$. For the remainder of the proof, we consider two cases: there is a monster component, and there is no monster component.

\subsubsection{Monster Component}

Suppose that the monster component $M$ exists. 
\begin{claim}\label{allodd}
    All components of $G[U]$, including $M$, are odd.
\end{claim}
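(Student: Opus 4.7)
The plan is to argue by contradiction: suppose some component $C$ of $G[U]$ is even. By Claim~\ref{claim:split-stu-structure}$(c)$, either $C = \{x\}$ is an isolated vertex with $x \in X$, or $C = M$ is the monster component. In the former case, the evenness condition combined with Claim~\ref{claim:maxmin-barrier-3-factor}$(i)$ (which forces $e(x, T) \le 1$ for vertices in an even component) gives $e(x, T) = 1$; in the latter case, $3|M| + e(M, T)$ is even. Each subcase will be handled by producing another ordered partition that is again a maxmin barrier but with strictly smaller $q$, contradicting our choice of $(S,T,U)$ as the maxmin barrier minimizing $q(S,T,U)$.

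For the isolated-vertex subcase, let $v$ be the unique $T$-neighbor of $x$. Claim~\ref{claim:maxmin-barrier-3-factor}$(iv)$ applied to $v$ gives $e(v, T) = 0$, no neighbors in any other even component of $G[U]$, and $e(v, C') \le 2$ for each odd component $C'$. The plan is to consider the swap
\[
(S, T', U') := \bigl(S,\, (T \setminus \{v\}) \cup \{x\},\, (U \setminus \{x\}) \cup \{v\}\bigr),
\]
which preserves both $|S|$ and $|T|$. A parity computation will show that the new component of $G[U']$ containing $v$ absorbs the odd-singleton neighbors of $v$, and that the net change in the number of odd components is strictly negative. The main verifications are that $\delta_G(S, T', U') \le \delta_G(S, T, U)$, so $(S, T', U')$ remains a barrier, and that at least one odd component is eliminated; Lemma~\ref{lem:minimum-degree-with-k}, giving $\delta(G) \ge 3$, will be used to ensure $v$ has enough neighbors in $U \setminus \{x\}$ for the swap to bite. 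If the swap fails to bite (e.g., if $v$ has no neighbors in $U$ beyond $x$), one falls back on moving $v$ to $S$ and re-running the argument on a smaller instance.

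For the even-monster subcase, since $Y \cap V(M)$ forms a clique in $G$, any vertex $w \in Y \cap V(M)$ with $|Y \cap V(M)| \ge 2$ is non-cut in $M$; the degenerate case $|Y \cap V(M)| = 1$ is handled separately, as then $M$ consists of a single $Y$-vertex together with its $X$-neighbors. Moving $w$ from $U$ to $S$ changes $\delta$ by exactly $+2$, and the maxmin maximality of $|S|$ combined with Lemma~\ref{lem:barrier-range} will then force $\delta_G(S, T, U) = -2$. Applying the binding condition $\beta^2(G) \ge 1$ to the test set $T \cup P$, where $P$ denotes the set of even isolated singletons of $G[U]$, and using Claim~\ref{claim:maxmin-barrier-3-factor}$(i)$--$(iv)$ to localize the second neighborhood inside $S \cup (T \cap Y) \cup O_2 \cup \{v \in V(M) : e(v, T) \ge 2\}$, should then yield a numerical contradiction. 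The main obstacle is precisely this monster case: $M$ has nontrivial internal structure, so the parity of $M - w$ depends both on $e(w, T)$ and on how the removal interacts with the remaining edges of $M$; leveraging the clique $Y \cap V(M)$ to select a non-cut vertex cleanly is the pivotal technical step.
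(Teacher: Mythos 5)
Your proposal misses the one hypothesis that makes this claim almost immediate: in this subsection $G$ is assumed \emph{edge-maximal} among split graphs on $V(G)$ with no $3$-factor, and the paper's proof is a short application of that assumption. Since $q(S,T,U)>0$ (Claim~\ref{claim:q>0-3-factor}) and every non-monster component of $G[U]$ is a singleton in $X$ (Claim~\ref{claim:split-stu-structure}$(c)$), an even component of $G[U]$ always coexists with an odd one, giving a pair consisting of $M$ and a singleton $\{x\}$ with $x\in X$ of opposite parities. Adding the edge $xy$ for $y\in V(M)\cap Y$ keeps the graph split, merges the two components into a single \emph{odd} component (as $3|M|+3+e(M,T)+e(x,T)$ is odd when the parities differ), and leaves $\delta_G(S,T,U)$ unchanged, so the larger graph still has no $3$-factor --- contradicting edge-maximality. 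You never invoke edge-maximality directly, and instead try to force a contradiction with the minimality of $q$; that route has genuine failures, described next.

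In your isolated-vertex subcase the claimed parity outcome is simply not true in general. Note first that $v\in T\cap Y$ and $e(v,M)\ge 1$ automatically, since $V(M)\cap Y\ne\emptyset$ and $Y$ is a clique. Let $A$ be the set of singletons of $G[U]$ other than $x$ adjacent to $v$. After your swap, the merged component $M\cup\{v\}\cup A$ of $G[U']$ has parity $3|M|+e(M,T)+e(M,v)+e(A,T)\pmod 2$ (the contributions $3(1+|A|)$, $e(v,x)=1$, $e(M,x)=0$ and $-|A|$ cancel mod $2$). If $M$ is even, $A=\emptyset$ and $e(M,v)$ is odd, the merged component is odd and $q'=q+1$; if $M$ is odd, $A$ contains no odd singletons and $e(M,v)+e(A,T)$ is even, then $q'=q$. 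In neither configuration do you contradict the choice of $(S,T,U)$ as a $q$-minimizing maxmin barrier, so the ``net change strictly negative'' step fails. The fallback of ``moving $v$ to $S$'' is vacuous: a maxmin barrier maximizes $|S|$ among all barriers, so $(S\cup\{v\}, T\setminus\{v\}, U)$ is by definition not a barrier, and there is no ``smaller instance'' to recurse on --- the graph and barrier are fixed. In the even-monster subcase, the assertion that any $w\in Y\cap V(M)$ is non-cut when $|Y\cap V(M)|\ge 2$ is false: an $X$-vertex of $M$ whose unique neighbor in $M$ is $w$ becomes isolated upon deleting $w$, so the clean ``$\delta$ changes by exactly $+2$'' computation is not available; and the concluding counting argument is only asserted (``should then yield a numerical contradiction''), with the set $O_2$ never defined. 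As it stands, the proposal does not prove the claim.
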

\begin{proof}
    If $M$ is an even component, then by Claim~\ref{claim:q>0-3-factor}, there must also be an odd component of $G[U]$, which consists of a single vertex $\{x\}$. Let $y \in V(M) \cap Y$. Then $H = G + xy$ satisfies $\delta_H(S,T,U) = \delta_G(S,T,U) < 0$, since in $H$ the even component $M$ and the odd component $\{x\}$ are replaced by a single odd component $M \cup \{x\}$. Thus $H$ has no $3$-factor. This contradicts the edge-maximality of $G$, so $M$ is odd. The same applies if $M$ is an odd component and $\{x\}$ is an even component. Therefore all components of $G[U]$ are odd.
\end{proof}



By Claim~\ref{claim:split-stu-structure}$(b)$, $T \cap Y$ contains at most two vertices, and we consider two cases for $|T \cap Y|$.

\textbf{Case 1: $T \cap Y = \emptyset$.}  Here, $T \cup (U \setminus V(M))$ is contained in $X$. We may assume $V(M) \subseteq Y$ by the same argument as in Claim~\ref{claim:split-stu-structure}$(a)$: if $x \in V(M) \cap X$, then adding all edges from $x$ to $Y$ and then moving $x$ from $X$ to $Y$ does not change the split graph structure or the deficiency of $(S,T,U)$, which contradicts either the edge-maximality of $G$ or the choice of $(X,Y)$. 

For all $y \in V(M)$, $e(y, T) \le 2$ by Claim~\ref{claim:maxmin-barrier-3-factor}$(i)$.
Symmetrically, for all $x \in T$, since $T \cup (U \setminus V(M))$ is independent, we have $d_{G-S}(x) = e(x, M)$, and by Claim~\ref{claim:split-stu-structure}$(d)$, $e(x, M) \le 2$. 
We conclude that the bipartite subgraph $G[T,M]$ has maximum degree $2$: it is a union of paths and cycles. 


\textbf{Case 1a: $M$ is not the only odd component.} In this case, we claim that $e(x, M) = 0$ for all $x \in T$, from which $\sum_{x \in T} d_{G-S}(x) = 0$ follows immediately.

Suppose not; let $x \in T$ with $e(x, M) \in \{1,2\}$. Choose $x' \in U\setminus V(M)$. Since $T\cap Y = \emptyset$, $d_{G-S}(x') = 0$. Define 
\[
(S, T', U') := \left(S, (T\setminus\{x\}) \cup \{x'\}, (U\setminus\{x'\}) \cup \{x\}\right),
\]
which is obtained from $(S, T, U)$ by swapping $x$ and $x'$. Then 
\begin{align*}
	\delta_G(S,T',U') &= \delta_G(S,T,U) + d_{G-S}(x') - d_{G-S}(x) + \Big(q(S,T,U) - q(S,T',U')\Big) \\
	&= \delta_G(S,T,U) - e(x, M) + \Big(q(S,T,U) - q(S,T',U')\Big).
\end{align*}
All odd components with respect to $(S,T,U)$ remain odd with respect to $(S,T',U')$ except $\{x'\}$ and possibly $M$; since $3|M \cup \{x\}| + e(M\cup \{x\}, T') = 3|M| + 3 + e(M,T) - e(x,M)$, $M$ remains odd if $e(x, M)=1$ and becomes even if $e(x, M)=2$. Therefore $q(S,T,U) - q(S,T',U')$ is $1$ if $e(x, M)=1$ and $2$ if $e(x, M)=2$; in either case, $\delta_G(S,T',U') = \delta_G(S,T,U)$. Therefore $(S,T',U')$ is also a barrier; it is a maxmin barrier, since $|T|=|T'|$, and $q(S,T',U') < q(S,T,U)$, contradicting the assumption that $(S, T, U)$ is a maxmin barrier minimizing $q$.

Inequality~\ref{deltaSTU} simplifies to $3|S| - 3|T| - q(S,T,U) \leq -2$. However, if we let $Q$ be a set of one vertex from each odd component of $G[U]$, then $|Q| \ge 2$ and $\Lambda^2(T \cup Q) \subseteq S$, so $|S| \ge |T| + |Q| = |T| + q(S,T,U)$. These two inequalities combine to give $2q(S,T,U) < -2$, which is a contradiction.

\textbf{Case 1b: $M$ is the only odd component.} In this case, Inequality~\ref{deltaSTU} implies that 
\[
    \delta_G(S,T,U) = 3|S| - 3|T| + e(T,M) - 1 \le -2,
\]
or $3|S| - 3|T| + e(T,M) < 0$. To show that this is impossible, we classify all possible components of $G[T,M]$, and their numbers. Define:
\begin{itemize}
\item $c_k$ to be the number of $2k$-vertex cycle components in $G[T,M]$ ($k \geq 2$).
\item $z_k$ to be the number of $2k$-vertex path components in $G[T,M]$ ($k \geq 1$).
\item $t_k$ to be the number of $(2k+1)$-vertex path components in $G[T,M]$ that have one more vertex in $T$ than in $M$ ($k \geq 0$; in particular, $t_0$ is the number of vertices in $T$ with no neighbors in $M$).
\item $m_k$ to be the number of $(2k+1)$-vertex path components in $G[T,M]$ that have one more vertex in $M$ than in $T$ ($k \geq 0$).
\end{itemize}
Let $T^*$ be a maximum subset of $T$ such that $\Lambda^2(T^*) \subseteq S$. Going component-by-component, we have
\begin{equation}
    |S| \ge |T^*| = \sum_{k\ge 2}\left\lfloor \frac k2\right\rfloor c_k + \sum_{k \ge 1} \left\lceil \frac k2\right\rceil z_k + \sum_{k \ge 0} \left\lceil \frac {k+1}2\right\rceil t_k + \sum_{k \ge 0} \left\lceil \frac k2\right\rceil m_k.\label{eq:t*-bound}
\end{equation}
Note that if $|T^*|=1$, then we cannot conclude $|S| \ge |T^*|$ from $\Lambda^2(T^*) \subseteq S$.
However, in this case, if $|S|<|T^*|$, then $S=\emptyset$. Since $|T| \geq 2$ and $T \subseteq X$ is independent, Claim~\ref{claim:split-stu-structure}$(d)$ implies that each vertex in $T$ has at most two neighbors in $G$, which contradicts Lemma~\ref{lem:minimum-degree-with-k}. So $|S| \ge |T^*|$.

Let $M^*$ be a maximum subset of $V(M)$ such that $\Lambda^2(T \cup M^*) \subseteq S \cup M$, so $|S| + |M| \ge |T| + |M^*|$. Going component by component to find each component's contribution to $|T| - |M| + |M^*|$, we have
\begin{equation}
    |S| \ge |T| - |M| + |M^*| = \sum_{k\ge 2}\left\lfloor \frac k2\right\rfloor c_k + \sum_{k \ge 1} \left\lceil \frac k2\right\rceil z_k + \sum_{k \ge 0} \left\lceil \frac k2 + 1\right\rceil t_k + \sum_{k \ge 0} \left\lceil \frac {k+1}2 - 1\right\rceil m_k.\label{eq:m*-bound}
\end{equation}
Finally, after rearranging $\delta_G(S,T,U) \le -2$ as $3|S| < 3|T| - e(T,M)$, we can go component-by-component to express $3|T| - e(T,M)$ in terms of $c_k$, $z_k$, $t_k$ and $m_k$, deducing that
\begin{equation}
    3|S| < \sum_{k \ge 2} kc_k + \sum_{k\ge 1} (k+1)z_k + \sum_{k\ge 0} (k+3)t_k + \sum_{k \ge 0} km_k.\label{eq:delta-bound}
\end{equation}
Taking $2 \cdot \eqref{eq:t*-bound} + \eqref{eq:m*-bound}$, and using the identity $\lceil \frac k2\rceil + \lceil \frac{k+1}{2}\rceil = k+1$ to simplify some of the terms, we get
\begin{equation}
    3|S| \ge \sum_{k\ge 2}3\left\lfloor \frac k2\right\rfloor c_k + \sum_{k \ge 1} 3\left\lceil \frac k2\right\rceil z_k + \sum_{k \ge 0} \left(\left\lceil \frac {k+1}2 \right\rceil + k + 2\right)t_k + \sum_{k \ge 0} \left(\left\lceil \frac k2\right\rceil + k\right)m_k.\label{eq:combined-bound}
\end{equation}
However, if we now compare \eqref{eq:delta-bound} and \eqref{eq:combined-bound} term-by-term, we notice that:
\begin{itemize}
    \item $3\lfloor \frac k2\rfloor \ge 1 + 2\lfloor \frac k2\rfloor \ge 1 + 2 \cdot \frac{k-1}{2} = k$ for $k\ge 2$;
    \item $3\lceil \frac k2\rceil = 3 > 2 = k+1$ for $k = 1$, and $3\lceil \frac k2\rceil \ge \frac{3k}{2} \ge k + 1$ for $k \ge 2$, so $3\lceil \frac k2\rceil \ge k+1$ for $k \ge 1$;
    \item $\lceil \frac {k+1}2 \rceil + k + 2 \ge 1 + k + 2 = k+3$ for $k\ge 0$;
    \item $\lceil \frac k2\rceil + k \ge k$ for $k \ge 0$.
\end{itemize}
Therefore the two inequalities are inconsistent, which rules out Case 1b.

\textbf{Case 2: $|T \cap Y| \ge 1$.} Let $t_X = |T \cap X|$, $t_Y = |T \cap Y|$, $m_X = |V(M) \cap X|$, and $m_Y = |V(M) \cap Y|$.  Of the non-monster components of $G[U]$, all of which are odd singleton components, let $q_0$ be the number with no neighbors in $T$ and let $q_2$ be the number with two neighbors in $T$. Then $t_Y \in \{1, 2\}$ by Claim~\ref{claim:split-stu-structure}$(b)$, and $m_Y \ge 1$ by the case assumption.

Compare $\delta_G(S,T,U)$ to $\delta_G(S,T',U')$ where $T' = T \cap X$ and $U' = U \cup (T \cap Y)$. The difference $\delta_G(S,T,U) - \delta_G(S,T',U')$ consists of
\begin{itemize}
\item $3|T'| - 3|T| = -3t_Y$;
\item $\sum_{v \in T \cap Y} d_{G-S}(v) \ge t_Y(t_Y + m_Y - 1) + 2q_2$;
\item $q(S,T',U') - q(S,T,U) \ge -q_2 - 1$, since at most $M$ and the odd singleton components with neighbors in $T$ can become even. 
\end{itemize}
If $t_Y(t_Y + m_Y - 4) + q_2 \ge 0$, then $\delta_G(S,T,U) - \delta_G(S,T',U') \ge -1$; since $\delta_G(S,T,U) \le -2$, $(S,T',U')$ is also a barrier, contradicting the assumption that $(S,T,U)$ is a maxmin barrier. Therefore $t_Y(t_Y + m_Y - 4) + q_2 < 0$. 

Note that $t_Y \in \{1,2\}$ and $m_Y \ge 1$; additionally, $q_2 > 0$ is only possible if $t_Y = 2$, since otherwise a vertex in $U \setminus V(M)$ cannot have two neighbors in $T$. This leaves four possibilities for $t_Y$, $m_Y$, and $q_2$:
\[
	(t_Y,m_Y,q_2) \in \Big\{ (1,1,0), (1,2,0), (2,1,0), (2,1,1) \Big\}.
\]
In all four cases, $t_Y(t_Y + m_Y - 4)$ is $-2$ or $-1$.

A lower bound on $\sum_{v \in T} d_{G-S}(v)$ is, once again, $2q_2 + t_Y(t_Y + m_Y - 1)$, and therefore
\[
	3|S| - 3t_X + t_Y(t_Y + m_Y - 4) + q_2 - q_0 - 1 \le \delta_G(S,T,U) \le -2.
\]
Since $t_Y(t_Y + m_Y - 4) \ge -2$, we can deduce the simpler inequality
\[
	3|S| - 3t_X + q_2 - q_0 \le 1.
\]

We now want to consider $\Lambda^2((T \cap X) \cup (U \setminus V(M)))$, but first we must deduce that $t_X + q_0 + q_2 \ge 2$. If not, then it follows from the inequality above that $|S| \le 1$, hence $|Y| = |S| + t_Y + m_Y \le 4$. Combined with Observation~\ref{obs:Y-range-split} for $k = 2$ and the assumption that $|G| = |X| + |Y| \ge 4$ is even, the only possible values for $(|X|, |Y|)$ are $(1, 3)$, $(0, 4)$, and $(2, 4)$. It is easy to check the possible graphs to see that all of them either contain a $3$-factor or have minimum degree less than $3$, violating Lemma~\ref{lem:minimum-degree-with-k}.


We can now draw several conclusions about $\Lambda^2((T \cap X) \cup (U\setminus V(M)))$:
\begin{itemize}
\item It contains no vertex of $V(M) \cap Y$, since these have no neighbors in $U\setminus V(M)$, and at most one neighbor in $T \cap X$ (they have at most two neighbors in $T$, by Claim~\ref{claim:maxmin-barrier-3-factor}$(i)$, and at least one is in $T \cap Y$).
\item If $q_2 = 0$, it contains no vertex of $T \cap Y$, since there are now no edges between $U\setminus V(M)$ and $T\cap Y$, by Claim~\ref{claim:split-stu-structure}$(b)$ there is at most one edge between $T \cap X$ and $T \cap Y$.
\item If $q_2 = 1$, it also contains no vertex of $T \cap Y$: in this case, $|T\cap Y|=2$, so the only edge in $G[T]$ lies within $T \cap Y$. Consequently, each vertex in $T \cap Y$ has exactly one neighbor in $U\setminus V(M)$ and none in $T \cap X$.
\end{itemize}
Therefore $\Lambda^2((T \cap X) \cup (U\setminus V(M))) \subseteq S$, and we conclude $|S| \ge |\Lambda^2((T \cap X) \cup (U\setminus V(M)))| \geq t_X + q_0 + q_2$. Substituting this into our earlier inequality, we get
\[
	3(t_X + q_0 + q_2) - 3t_X + q_2 - q_0 \le 1 \implies 2q_0 + 4q_2 \le 1
\]
hence $q_0 = q_2 = 0$ and $U = V(M)$. The inequalities $3|S| - 3t_X + q_2 - q_0 \le 1$ and $|S| \ge t_X + q_0 + q_2$ now imply, together, that $|S| = t_X$. Moreover, since $t_X+q_0+q_2 \ge 2$, it follows that $|S| = t_X \ge 2$.

Returning to the original inequality $\delta_G(S,T,U) \le -2$, we can refine the expression as follows:
\[
	3|S| - 3t_X + t_Y(t_Y + m_Y - 4) + 2e(T\cap X, T\cap Y) + e(T\cap X, V(M)\cap Y) + e(T\cap Y, V(M)\cap X) - 1 \le -2,
\]
since each edge between $T \cap X$ and $T \cap Y$ contributes $2$ to the degree sum in $\delta_G(S,T,U)$, while each edge between $T \cap X$ and $V(M) \cap Y$, or between $T \cap Y$ and $V(M) \cap X$, contributes only $1$ to the degree sum. Since $|S| = t_X$ and $t_Y(t_Y + m_Y - 4) \ge -2$, we have
\[
	2e(T \cap X, T \cap Y) + e(T \cap X, V(M) \cap Y) + e(T \cap Y, V(M) \cap X) \le 1.
\]
Hence there are no edges between $T \cap X$ and $T\cap Y$, and there is at most one edge in total between $T \cap X$ and $V(M) \cap Y$, and between $T \cap Y$ and $V(M) \cap X$.

Choose $y \in V(M) \cap Y$ with a neighbor in $T \cap X$ if such a vertex exists; otherwise, choose $y$ arbitrarily. Then $\Lambda^2((T \cap X) \cup \{y\}) \subseteq S$: apart from $y$, no vertex in $T \cup U$ has any edges to $T \cap X$. Since $|(T \cap X) \cup \{y\}| = t_X + 1$ and $|S| = t_X$, this violates the assumption that $\beta^2(G) \ge 1$.

\subsubsection{No Monster Component}

If there is no monster component, then $U$ is entirely contained in $X$. By Claim~\ref{claim:split-stu-structure}$(b)$, $|T \cap Y| \le 2$, and by Claim~\ref{claim:split-stu-structure}$(e)$, each vertex of $U$ can have at most one neighbor in $T$, which is in $T \cap Y$. Each vertex of $T \cap X$ can also have at most one neighbor in $V(G)\setminus S$, by Claim~\ref{claim:split-stu-structure}$(d)$, and it is also in $T \cap Y$. Let $Q$ be the set of vertices in odd components of $G[U]$. Then $\Lambda^2(T \cup Q) \subseteq S \cup (T \cap Y)$. 

Moreover, if $m = |T \cap Y \cap \Lambda^2(T \cup Q)|$, then $m \le 2$ and $\sum_{v \in T} d_{G-S}(v) \ge 2m$, because if $y \in T \cap Y \cap \Lambda^2(T \cup Q)$, then $d_{G-S}(y) \ge 2$. 

Applying the neighborhood condition of $\beta^2(G) \ge 1$ to $T \cup Q$, we conclude that 
\[
    |T| + q(S,T,U) = |T| + |Q| \le |\Lambda^2(T \cup Q)| \le |S| + m.
\]
But then substituting from this inequality for $|S|$ in Inequality~\ref{deltaSTU},
\begin{align*}
-2 \ge 3(|T| + q(S,T,U) - m) - 3|T| + 2m - q(S,T,U) 
    = 2q(S,T,U) - m.
\end{align*}
Since $m \le 2$, we have $2q(S,T,U) \le m - 2 \le 0$, so $q(S,T,U) = 0$, contradicting Claim~\ref{claim:q>0-3-factor}.

\subsection{The General Case}

\begin{proof}[Proof of Theorem~\ref{thm:k+1-factor-split-graph}]
Let $G$ be a split graph with even order $n \ge 2k$ and $\beta^k(G) \ge 1$ for some integer $k \ge 2$. By Theorem~\ref{thm:1-factors-nonbip}, the graph $G$ contains at least $k-1$ pairwise disjoint perfect matchings. Select any $k-2$ of them, and let $M$ denote their union. Then $M$ is a $(k-2)$-factor in $G$. Let $H = G-E(M)$. By Lemma~\ref{lem:f-dominating} with $F = M$, we have $\beta^2(H) \ge 1$. Together with $V(H) = V(G)$, this implies that $H$ is a split graph of even order with $\beta^2(H) \ge 1$. By the base case shown in the preceding section, $H$ admits a $3$-factor disjoint from $M$, and the union of this $3$-factor with $M$ is a $(k+1)$-factor in $G$.
\end{proof}

\section{Future Work}
We highlight two potential areas for future study. The graph $G$ consisting of a clique $K_{n-1}$ together with a vertex adjacent to exactly $2k - 1$ vertices of the clique satisfies $\beta^k(G) \ge 1$ but does not admit a $2k$-factor. By Lemma~\ref{lem:minimum-degree-with-k}, the degree $2k-1$ vertex cannot be replaced by a vertex of smaller degree. Together with the results of this paper, this leaves the existence of factors with intermediate degrees between $k + 1$ and $2k - 1$ for graphs satisfying $\beta^k(G) \ge 1$ open. We also speculate that graphs satisfying $\beta^k(G) \ge 1$ may admit more disjoint perfect or near-perfect matchings than currently known, particularly when additional graph properties are present.

We also believe that the results in Section~\ref{subsec:1-factors-bip} can be improved. For the weak bipartite binding number, we know $\beta^k(G, X) \ge 1$ ensures the existence of $k$ disjoint matchings, each covering all of $X$. When $k = 2$, a result of Bar\'at \textit{et al.}~\cite{Hungarian} implies that, under the same condition, $G$ admits a $2$-factor covering $X$. We conjecture that $\beta^k(G,X) \geq 1$ implies the existence of a $k$-factor covering $X$.


\section*{Conflict of Interest Statement}
The authors declare that they have no conflict interest regarding
the publication of this paper.

\bibliographystyle{plain}
\bibliography{Reference.bib} 

\end{document}